\documentclass[dvipsnames]{article}

\usepackage[latin1, utf8]{inputenc}
\usepackage[T1]{fontenc}
\usepackage[english]{babel}
\usepackage{enumerate}
\usepackage{todonotes}
\usepackage{hyperref}
\usepackage{amsmath}
\usepackage{amssymb}
\usepackage{amsthm}
\usepackage{mathrsfs}
\usepackage{authblk}

\usepackage{xkeyval}
\usepackage{stmaryrd}
\usepackage{tocbibind}
\usepackage{comment}
\usepackage{caption}
\usepackage{subcaption}
\usepackage{tikz-cd} 
\usepackage[left=2.5cm,top=3cm,right=2.5cm,bottom=3cm,bindingoffset=0.5cm]{geometry}

\newcommand{\R}{\mathbb{R}}
\newcommand{\N}{\mathbb{N}}

\newcommand{\sing}{\mathrm{sing}}
\newcommand{\adhco}{\overline{\mathrm{conv}}}

\newcommand{\cone}{\operatorname{cone}}
\newcommand{\ran}{\operatorname{ran}}

\newcommand{\scl}[2]{\left\langle #1, #2\right\rangle}

\newcommand{\ext}{\mathrm{ext}}
\newcommand{\dom}{\mathrm{dom}}

\newcommand{\ie}{\textit{i.e., }}

\DeclareMathOperator*{\argmax}{arg\,max}
\DeclareMathOperator*{\argmin}{arg\,min}

\newcommand{\cp}[1]{{\color{black}#1}}
\newcommand{\cz}[1]{{\color{black}#1}}

\newcommand {\e}  {\varepsilon}
\newcommand{\vertiii}[1]{{\vert\kern-0.25ex\vert\kern-0.25ex\vert #1 
    \vert\kern-0.25ex\vert\kern-0.25ex\vert}}

\newtheorem{prpstn}{Proposition}[section]
\newtheorem{lmm}{Lemma}[section]
\newtheorem{thrm}{Theorem}[section]
\newtheorem{dfntn}{Definition}[section]
\newtheorem{crllr}{Corollary}[section]
\newtheorem{rmrk}{Remark}[section]
\newtheorem{assum}{Assumption}[section]

\title{
\cz{Generalisation of Farkas' lemma beyond closedness: a constructive approach via Fenchel-Rockafellar duality}
}

\author[a]{Camille Pouchol}
\author[b,c]{Emmanuel Trélat}
\author[b,c]{Christophe Zhang}
\affil[a]{Université Paris Cité, CNRS, MAP5, F-75006 Paris, France.}
\affil[b]{Sorbonne Universit\'{e}, Universit\'{e} Paris Cit\'{e}, CNRS, Laboratoire Jacques-Louis Lions, 75005 Paris, France.} 
\affil[c]{CAGE, INRIA, Paris, France.}
\date{\empty}
\begin{document}
\maketitle

\begin{abstract}

Farkas' lemma is an ubiquitous tool in optimisation, as it provides necessary and sufficient conditions to have $b \in A(P)$, where $P$ is a closed convex cone, $A$ is a (continuous) linear mapping and $b$ is a fixed vector. 
The standard underlying hypothesis is the closedness of $A(P)$, which is not always satisfied and can be difficult to check. We devise a new method to generalise Farkas' lemma, based on a primal-dual pair of optimisation problems and Fenchel-Rockafellar duality theory. We work under the sole hypothesis that $P$ be generated by a closed bounded convex set. 
This hypothesis is weaker than in previous generalisations of Farkas' lemma, which almost all require that $A(P)$ be closed, or, in few cases, that only $P$ be closed.
In our case, $P$ (and a fortiori $A(P)$) is not necessarily closed; we uncover necessary and sufficient conditions both for $b \in A(P)$ and $b \in \overline{A(P)}$. For a given $\e \geq 0$, we exhibit constructive characterisations of $x \in P$ such that $\|Ax-b\| \leq \e$ when it exists, by means of optimality conditions. For $\e = 0$, these strongly rely on whether the dual problem admits a solution, and we discuss conditions under which it does.
Finally, we also explain how, upon relaxation, we may apply our method to a nonconvex cone.

\end{abstract}
\tableofcontents
\section{Introduction and main results}
Let $X$ and $Y$ be two (real) Hilbert spaces, $A : X \to Y$ a continuous bounded operator (an element of $\mathcal L(X,Y)$), $b \in Y$ and $P \subset X$ a cone containing $0$. We are interested in the following problem, which is a variant of Farkas' celebrated lemma (\cite{Farkas1902}): do we have $b\in A(P)$, that is, is the vector $b$ in the (somewhat) implicit cone $A(P)$?

This problem can also be formulated as a constrained linear equation:
\begin{equation}\label{farkas-problem} \text{Solving the linear equation $A x = b$ under the constraint $x \in P$}. \end{equation} We let $A (P) = \{A x, \, x\in P\}$.

We consider both 
\begin{itemize}
\item[(i)] the \textbf{approximate solvability} problem, 
\begin{equation}
    \label{approx-Farkas-problem}
    \textrm{For any given} \ \e>0, \ \textrm{find} \ x_\e \in P \ \textrm{such that} \ \|A x_\e-b\|_Y\leq \e,
\end{equation}
that is, determining whether $b\in \overline{A(P)}.$
\item[(ii)] the \textbf{exact solvability} problem, 
\begin{equation}
    \label{exact-Farkas-problem}
    \textrm{Find} \ x \in P\ \textrm{such that} \ A x = b,
\end{equation}
that is, determining whether $b\in A(P)$.
\end{itemize}

\subsection{Farkas' lemma(s): necessary and sufficient conditions for solvability}
The original version of Farkas' lemma~\cite{Farkas1902} is concerned with the  exact solvability problem within the cone $\R_+^n$ in $\R^n$. It provides a necessary and sufficient condition (often formulated as an alternative):
\begin{lmm}
     Let $f, v_1, \cdots, v_m \in \R^n$. Then, there exist $(f_1, \ldots, f_m) \in \R_+^m$ such that
    \[f=\sum_{i=1}^m f_i v_i\]
    if and only if
    \[\{d\in \R^n, \, \langle v_i, d\rangle \leq 0, \ \forall i \in \{1, \ldots, m\}\} \subset \{d\in \R^n, \, \langle d, f\rangle \leq 0\}. \]
\end{lmm}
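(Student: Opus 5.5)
The easy direction comes first. If $f=\sum_i f_i v_i$ with $f_i\geq 0$, then for any $d$ with $\langle v_i,d\rangle\leq 0$ for all $i$ we get $\langle d,f\rangle=\sum_i f_i\langle v_i,d\rangle\leq 0$. This proves the inclusion.

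For the converse I argue by contraposition, using a separation argument on the finitely generated cone $C=\{\sum_i f_i v_i,\ f\in\R_+^m\}$. Suppose $f\notin C$. The goal is a vector $d$ with $\langle v_i,d\rangle\leq 0$ for all $i$ and $\langle d,f\rangle>0$. The plan has three steps.

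\begin{enumerate}
\item Show that $C$ is closed; this is the only nontrivial point.
\item Take $p$ the Euclidean projection of $f$ onto the closed convex cone $C$, and set $d=f-p\neq 0$. The variational inequality $\langle f-p, c-p\rangle\leq 0$ for all $c\in C$ does the rest. With $c=0$ and $c=2p$ it gives $\langle d,p\rangle=0$.
\item Since $C$ is a cone, it then gives $\langle d,c\rangle\leq 0$ for all $c\in C$, in particular $\langle d,v_i\rangle\leq 0$. Finally $\langle d,f\rangle=\langle d,d\rangle+\langle d,p\rangle=\|d\|^2>0$, so the inclusion fails.
\end{enumerate}

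The main obstacle is the closedness of $C$; this is exactly the hypothesis the paper later wants to weaken, so here it must be proved by finite-dimensional means. I would use a Carathéodory-type reduction. Every $c\in C$ can be written as a nonnegative combination of a linearly independent subfamily of $\{v_i\}$. Indeed, if the vectors with positive coefficients are dependent, take a nontrivial relation $\sum \lambda_i v_i=0$ with some $\lambda_i>0$. Subtracting $t\lambda$ from the coefficients, with $t$ maximal keeping them nonnegative, kills at least one coefficient, and we iterate.

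Hence $C$ is a finite union of cones $C_J=\{\sum_{i\in J} f_i v_i,\ f_i\geq 0\}$ over linearly independent index sets $J$. Each $C_J$ is the image of the closed orthant $\R_+^{J}$ under an injective linear map. Such a map is a linear homeomorphism onto its (closed) range, so $C_J$ is closed, and a finite union of closed sets is closed. This completes the argument.
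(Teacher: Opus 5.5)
Your proof is correct and complete. The easy direction is immediate. The projection step correctly produces $d=f-p\neq 0$ with $\langle d,p\rangle=0$, $\langle d,v_i\rangle\le 0$ for all $i$, and $\langle d,f\rangle=\|d\|^2>0$. The Carathéodory reduction, which shrinks the support along a linear dependence until the remaining generators are independent, combined with the fact that an injective linear map on $\R^J$ is a homeomorphism onto its closed range, correctly shows that the finitely generated cone $C$ is closed.

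The paper does not prove this lemma; it only quotes it as Farkas' original result, so there is no in-paper proof to match. The closest comparison is the paper's own machinery. Apply Theorem~\ref{intersect-ball-ncs}(i) with $X=\R^m$, $Y=\R^n$, $P=\R_+^m$ and $A$ the matrix with columns $v_1,\dots,v_m$, so that $(A^\ast d)_+=0$ if and only if $\langle v_i,d\rangle\le 0$ for all $i$. This makes your inclusion condition equivalent to $f\in\overline{A(\R_+^m)}$. The paper obtains that through coercivity of the dual functional and Fenchel--Rockafellar duality, not through an explicit separating vector as you do.

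To reach the lemma as stated from that route, one still needs exactly your Step 1, the closedness of $A(\R_+^m)$. Closedness is also what makes the quantified condition of Theorem~\ref{intersect-ball-ncs}(ii) equivalent to the unquantified one in this setting, as the remark after that theorem notes. So your argument is the self-contained finite-dimensional proof. The paper's approach instead trades the closedness step for a statement about the closure, or for a quantified inequality, which is what lets it handle cones whose image need not be closed.
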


For a survey on the rich history of Farkas' lemma and its generalisations, we refer to~\cite{Dinh2014Farkas}.
The most standard and used ones writes as the following alternative, with $P^\circ = \{x \in \R^n, \; \forall y \in P, \; \langle x, y \rangle \leq 0\}$ standing for the polar cone of $P$. 
\begin{lmm}
\label{lmm-farkas}
    [Farkas' lemma]
    Let $A\in \R^{m\times n}$, $b\in \R^m$ and $P\subset \R^n$ be a closed convex cone such that $A(P)$ is closed. Then, given $b \in \R^m$, the following alternative holds: 
    \begin{enumerate}
        \item the exact solvability problem admits a solution,
        \item there exists $y\in \R^m$ such that
        $A^\ast y \in P^\circ$ and $\langle b, y \rangle >0.$
    \end{enumerate}
\end{lmm}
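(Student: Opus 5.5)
The plan is to prove the alternative in two parts: first that the two items cannot both hold, then that if item 1 fails, item 2 holds. The second part is a separation argument in $\R^m$.

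\emph{Both cannot hold.} Suppose $x \in P$ with $Ax = b$, and suppose $y$ satisfies $A^\ast y \in P^\circ$. Then
\[
\langle b, y\rangle = \langle Ax, y\rangle = \langle x, A^\ast y\rangle \leq 0,
\]
since $x \in P$ and $A^\ast y \in P^\circ$. This contradicts $\langle b, y\rangle > 0$.

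\emph{At least one holds.} Assume item 1 fails, so $b \notin A(P)$. The set $K := A(P)$ is a convex cone, being the linear image of a convex cone, and it contains $0$. By hypothesis $K$ is also closed and nonempty. We strictly separate the point $b$ from $K$ in the finite-dimensional Hilbert space $\R^m$, using the projection onto $K$ to keep the argument constructive.

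\begin{enumerate}
\item Let $p$ be the projection of $b$ onto $K$. Set $y := b - p$, which is nonzero because $b \notin K$.
\item Write the variational inequality $\langle y, k - p\rangle \leq 0$ for all $k \in K$.
\item Since $K$ is a cone, take $k = 2p$ and $k = 0$. This gives $\langle y, p\rangle = 0$.
\item Consequently $\langle y, k\rangle \leq 0$ for all $k \in K$.
\item Also $\langle y, b\rangle = \langle y, y + p\rangle = \|y\|^2 > 0$.
\end{enumerate}

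Finally, the inequality $\langle y, Ax\rangle \leq 0$ for all $x \in P$ reads $\langle A^\ast y, x\rangle \leq 0$ for all $x \in P$, that is, $A^\ast y \in P^\circ$. Hence item 2 holds.

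The only delicate step is the existence of the projection $p$. This is exactly where the closedness of $A(P)$ enters: a nonempty closed convex set in a Hilbert space admits a unique nearest point. Without closedness, $b$ could lie in $\overline{A(P)} \setminus A(P)$. In that case no strict separation exists and the alternative fails, which is the gap the rest of the paper addresses. The closedness of $P$ itself is not used here beyond ensuring that $P^\circ$ is the natural dual object.
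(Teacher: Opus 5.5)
Your proof is correct. The incompatibility of the two items is the one-line adjoint computation. Projecting $b$ onto the closed convex cone $A(P)$ then gives the certificate $y=b-p$, with $\langle y,p\rangle=0$, $\langle y,k\rangle\le 0$ on $A(P)$ and $\langle y,b\rangle=\|y\|^2>0$.

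The paper gives no direct proof of this classical lemma. It recovers it, in the remark after Theorem~\ref{intersect-ball-ncs}, as the case $\overline{A(P)}=A(P)$ of Theorem~\ref{intersect-ball-ncs}~(i). That is Proposition~\ref{ncs-approx} with $K_r=P\cap\overline{B(0,1)}$, where $\sigma_{K_r}(z)=\|z_+\|_X$ vanishes exactly on $P^\circ$.

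\textbf{How the two routes differ.} The paper's easy direction is your incompatibility argument, passed to the limit along $Aw_n\to b$. Its hard direction uses duality instead of separation:
\begin{itemize}
\item the negation of item~2 makes $J_\e(y)=\tfrac12\|(A^\ast y)_+\|_X^2-\langle b,y\rangle_Y+\e\|y\|_Y$ coercive for each $\e>0$ (Lemma~\ref{lem-coercivity-interpretation});
\item Fenchel--Rockafellar strong duality (Proposition~\ref{strong-duality}) then makes the primal value finite, which produces $x_\e\in P$ with $\|Ax_\e-b\|_Y\le\e$;
\item closedness of $A(P)$ enters only at the very end.
\end{itemize}

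\textbf{What each approach buys.} Your route is far shorter and needs neither the conjugate of $\tfrac12 j_{K_r}^2$ nor a qualification condition. If you project onto $\overline{A(P)}$ instead of $A(P)$, the same lines prove Theorem~\ref{intersect-ball-ncs}~(i) in arbitrary Hilbert spaces. So in both approaches, closedness is used only through $\overline{A(P)}=A(P)$.

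What the paper's detour buys is constructive content. It gives an unconstrained dual problem whose minimiser yields the least-norm approximate solution $x_\e^\star=(A^\ast y_\e^\star)_+$. It also gives a framework that extends to exact solvability without closedness (Theorem~\ref{intersect-ball-ncs}~(ii)) and to non-closed cones $\cone(K_r)$. Your certificate, by contrast, needs the projection onto the implicit set $A(P)$, which is itself a conic least-squares problem. Calling your argument ``constructive'' is therefore somewhat generous.
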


Farkas' lemma has been generalised in several directions: in particular, it can be stated in very abstract general infinite-dimensional settings~\cite{craven1977generalizations}.
However, the hypothesis that $A(P)$ is closed remains a cornerstone to all these results. In fact, in the above cited work, the authors prove that the Farkas alternative holds if and only if $A(P)$ is closed. Under this key assumption, let us note that distinguishing between exact and approximate solvability as we did above is irrelevant. 

Up to our knowledge, only the two works~\cite{HLermaLasserre1997, lasserre1997farkas} formulate a generalisation without assuming the closedness of $A(P)$. Let us give one such result, in the particular case of reflexive Banach spaces; here we translate it into the Hilbertian setting we are interested in.
\begin{thrm}\label{thm-Lasserre}
        [Hernandez-Lerma, Lasserre, 1997]
        Assume that $P\subset X$ is a closed convex cone. Then, the following are equivalent for $b\in Y$:
        \begin{itemize}
            \item[(a)] The equation $Ax=b$ has a solution $x\in P$;
            \item[(b)] There exists $M>0$ such that, for all $y\in Y, \ p\in P^\circ$,
            \[\langle b, y \rangle_Y + M\|A^\ast y +p \|_X \geq 0.\]
        \end{itemize}
    \end{thrm}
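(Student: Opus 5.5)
We prove (a)$\Rightarrow$(b) directly and (b)$\Rightarrow$(a) by Hahn--Banach separation combined with weak compactness, in the spirit of the Fenchel--Rockafellar approach the paper develops.

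\emph{(a)$\Rightarrow$(b).} Let $x\in P$ with $Ax=b$, and set $M=\|x\|_X$ (any $M>0$ if $x=0$). For $y\in Y$ and $p\in P^\circ$,
\[
\langle b,y\rangle_Y=\langle x,A^\ast y\rangle_X=\langle x,A^\ast y+p\rangle_X-\langle x,p\rangle_X\geq -\|x\|_X\|A^\ast y+p\|_X .
\]
Here we used Cauchy--Schwarz and $\langle x,p\rangle_X\le 0$, which holds because $x\in P$ and $p\in P^\circ$.

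\emph{(b)$\Rightarrow$(a).} Consider the set
\[
C_M=A(P\cap \overline{B}_X(0,M)) .
\]
It is convex. It is also weakly compact: $P\cap\overline B_X(0,M)$ is closed, convex and bounded, hence weakly compact in the Hilbert space $X$, and $A$ is weak--weak continuous. Consequently $C_M$ is closed and convex in $Y$.

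Suppose $b\notin C_M$. Strict separation gives $y\in Y$ such that
\[
\langle b,y\rangle_Y<\inf_{x\in P,\ \|x\|\le M}\langle Ax,y\rangle_Y=\inf_{x\in P,\ \|x\|\le M}\langle x,A^\ast y\rangle_X .
\]
The remaining task is to compute the right-hand side, the support function $\sigma(z)=-\inf_{x\in P\cap \overline B(0,M)}\langle x,z\rangle$ at $z=A^\ast y$. The claim is that
\[
\inf_{x\in P,\ \|x\|\le M}\langle x,z\rangle=-M\,\mathrm{dist}(-z,P^\circ)=-M\min_{p\in P^\circ}\|z+p\|_X .
\]
This is the standard formula for the support function of an intersection: the support function of $P\cap\overline B$ is the infimal convolution of the support function of $P$, namely the indicator of $P^\circ$, with $M\|\cdot\|$. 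The intersection formula holds exactly (no closure needed) because the ball has nonempty interior and $0\in P$, so a Fenchel--Rockafellar qualification condition is satisfied. The minimum over $P^\circ$ is attained since $P^\circ$ is a closed convex set in a Hilbert space.

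With this formula, taking $p\in P^\circ$ attaining the distance yields
\[
\langle b,y\rangle_Y+M\|A^\ast y+p\|_X<0,
\]
which contradicts (b). Hence $b\in C_M\subset A(P)$.

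The \textbf{main obstacle} is justifying this duality formula, i.e.\ the exact inf-convolution without closure. I would verify it directly. The inequality ``$\ge$'' is the bound from the first part. For ``$\le$'', apply Fenchel--Rockafellar to
\[
\min_x\ \bigl(\iota_P(x)+\langle x,z\rangle\bigr)+\iota_{\overline B(0,M)}(x).
\]
The indicator of the ball is continuous at $0\in P$, which gives zero duality gap and attainment of the dual problem, and the dual problem is exactly $-M\min_{p\in P^\circ}\|z+p\|$. A useful remark is that the proof produces a solution with $\|x\|\le M$.
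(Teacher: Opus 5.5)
Your proof is correct, but it takes a different route from the paper's.

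\textbf{How the paper gets this statement.} The paper never proves it directly. It reads it off Theorem~\ref{intersect-ball-ncs}(ii), using Moreau's identity $\inf_{p\in P^\circ}\|z-p\|_X=\|z_+\|_X$ after replacing $y$ by $-y$. Theorem~\ref{intersect-ball-ncs}(ii) is in turn the case $K_r=P\cap\overline{B(0,1)}$ of Proposition~\ref{ncs-exact}. There the converse direction goes through duality: the inequality makes $J_0(y)=\tfrac12\|(A^\ast y)_+\|_X^2-\langle b,y\rangle_Y$ bounded below. Fenchel--Rockafellar, with the qualification condition on the dual side since $\tfrac12\sigma_{K_r}^2$ is continuous everywhere, then forces $\pi_0=\inf\{\tfrac12\|x\|_X^2,\ x\in P,\ Ax=b\}$ to be finite and attained.

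\textbf{What you do instead.} You separate $b$ directly from the weakly compact convex set $A(P\cap\overline{B(0,M)})$. Your support-function formula is exactly Lemma~\ref{intersect-ball} combined with Moreau's identity, so you could justify it that way rather than by a second appeal to Fenchel--Rockafellar. Both arguments ultimately rest on weak compactness of bounded closed convex sets.

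\textbf{What your route buys.} It is shorter and self-contained, and it needs no sign flip. It is also quantitative: (b) with constant $M$ yields a solution of norm at most $M$. Hence the best constant is the minimal norm of a solution, which is the paper's $C=\sqrt{2\pi_0}$. Finally, it extends verbatim to $P_r=\cone(K_r)$: separate $b$ from $A(MK_r)$, whose support function in direction $y$ is $M\sigma_{K_r}(A^\ast y)$, and you get Theorem~\ref{general-ncs}(ii) directly.

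\textbf{What the paper's route buys.} It builds the primal--dual pair itself. The same dual functional gives the closure characterisation, via coercivity of $J_\e$. When $J_0$ has a minimiser $y^\star$, it also gives the least-norm solution explicitly as $(A^\ast y^\star)_+$. Pure separation gives existence only.
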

Given the equivalence proved in \cite{craven1977generalizations}, foregoing the restrictive hypothesis that $A(P)$ be closed requires a stronger characterisation. Accordingly, the above theorem gives a quantified necessary sufficient condition. The underlying idea is to apply the "usual" Farkas lemma to an augmented system. The corresponding proof is rather intricate, it is not constructive, and the resulting condition, though quantified, is difficult to put in use since the inequality hinges on two variables.


\subsection{Main contributions}

This article presents a generalisation of Farkas' lemma, close in spirit to Theorem~\ref{thm-Lasserre}, which we state under hypotheses of increasing generality on the cones involved. As in~\cite{HLermaLasserre1997, lasserre1997farkas}, we forego the hypothesis that $A(P)$ is closed, hence our formulation of the two distinct problems \eqref{approx-Farkas-problem} and~\eqref{exact-Farkas-problem}.

Our main contributions are then as follows:
\begin{itemize}
    \item we recover the results of~\cite{HLermaLasserre1997, lasserre1997farkas} by means of an arguably simpler proof,
    \item not only do we forego of the assumption that $A(P)$ is closed (which is the usual hypothesis in Farkas' lemma, see Lemma \ref{lmm-farkas} and \cite{craven1977generalizations}), we also weaken the underlying hypothesis that $P$ itself is closed: \cz{our approach only requires that $P$ can be generated by a bounded closed convex set containing $0$},
    \item contrarily to other proofs, our proof is constructive under general hypotheses, in a sense we shall make more precise later on,
    \item the overall technique lends itself to relaxation, i.e., one can consider the approximate or exact solvability problem in nonconvex cones, and apply our strategy on well-chosen convex cones derived from them.
\end{itemize}

\cz{\paragraph{Algorithmic viewpoint.}
A key feature of our approach is that (approximate or exact) Farkas solutions can be \emph{constructed} by solving an explicit convex optimisation problem. For a tolerance $\varepsilon\geq 0$, we minimise an \textbf{unconstrained} convex functional over $Y$ (the dual problem $\mathcal{D}_{\e}$). Then, under suitable conditions, we may recover a primal variable $x_\varepsilon$ through explicit first-order optimality conditions (projection onto a cone in the closed-cone case, or a support-function subgradient formula in the general case). This yields a practical primal--dual procedure to build either a solution $x\in P$ with $Ax=b$, or approximate solutions $x_\e \in P$ with $\Vert Ax_\e-b\Vert_Y\leq \varepsilon$, when they exist.}

\paragraph{Related literature.}
Farkas-type statements and theorems of the alternative can be derived from convex duality in various settings (see, e.g., Farkas-type results for constrained optimisation problems, obtained via conjugate functions and conic duality in \cite{bot2005farkas, boct2006farkas, boct2007some}).

In this article however, we address the classical problem of solving a linear equation on a (convex) cone, by formulating a well-chosen auxiliary optimisation problem. 
\cz{Problems of this form are already present in the works~\cite{zualinescu2008zero, zualinescu2012duality}:  given $c \in X$ fixed, $x \mapsto \langle c, x \rangle_X$ is maximised under the constraints $Ax =b$ and $x \in P$. A duality framework is then established to prove Farkas-type results on the solvability of the constrained equation $Ax=b$, $x\in P$.

In contrast to the above, we provide a unified \emph{constructive} approach based on Fenchel--Rockafellar duality in a Hilbertian framework. We propose a different auxiliary optimisation problem (given by~\eqref{primal-general}), for which
 our approach yields explicit primal minimisers (which are automatically solutions of the solvability problems \eqref{approx-Farkas-problem}--\eqref{exact-Farkas-problem}) from dual minimisers. Our results cover cones of the form $P_r=\mathrm{cone}(K_r)$ where the bounded convex generator $K_r$ is closed but $P_r$ (and thus $A(P_r)$) may fail to be closed.}

\medskip

Let us now introduce our results in full detail. For ease of readability, use and comparison with other results, we proceed in increasing order of generality.


\subsection{Variants of Farkas' lemma: necessary and sufficient conditions for solvability}
Let us first state necessary and sufficient conditions for the existence of solutions. Throughout the paper, $X$ and $Y$ are real Hilbert spaces, with inner products $\langle \cdot, \cdot\rangle_X$ and $\langle \cdot, \cdot\rangle_Y$ (with corresponding norms $\|\cdot\|_X$ and $\|\cdot\|_Y$), respectively. \cz{We use the Riesz representation theorem so that the adjoint $A^*$ of a bounded linear operator $A\in\mathcal L(X,Y)$ is the Hilbert adjoint $A^\ast \in\mathcal L(Y,X)$ defined by $\langle Ax, y\rangle_Y = \langle x, A^*y\rangle_X$ for all $(x,y) \in X \times Y$.}

\subsubsection{Closed convex cone}

Here, we assume that \textbf{$P$ is closed and convex}, and for $x \in X$, we let $x_+$ be its projection onto $P$.
Note that in this first setting, we already forego the usual hypothesis that $A(P)$ is closed, but retain the classical assumption of closedness for $P$, as in \cite{HLermaLasserre1997, lasserre1997farkas}.
\begin{thrm}
\label{intersect-ball-ncs}
Let $A \in \mathcal L(X,Y)$, $b \in Y$ and $P \subset X$ a closed convex cone. The following dual characterisations hold for any $b\in Y$:
\begin{itemize}
\item[(i)] \[b \in \overline{A(P)} \quad \iff \quad \forall y \in Y, \; (A^\ast y)_+ = 0 \, \implies \,\scl{b}{y}_Y \leq 0. \]
\item[(ii)] \[b \in A(P) \quad \iff \quad \exists C>0,\; \forall y \in Y,\; \scl{b}{y}_Y \leq C\,\|(A^\ast y)_+\|_X. \]
\end{itemize}
\end{thrm}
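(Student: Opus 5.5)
The plan is to derive both characterisations from Fenchel--Rockafellar duality applied to a penalised primal problem. For $\e\geq 0$, I would consider the primal problem
\[
\inf_{x\in P}\ \tfrac12\|x\|_X^2 \quad\text{subject to}\quad \|Ax-b\|_Y\leq \e,
\]
together with the unconstrained dual functional on $Y$,
\[
J_\e(y)=\tfrac12\|(A^\ast y)_+\|_X^2-\scl{b}{y}_Y+\e\|y\|_Y .
\]
This dual comes from conjugating $f(x)=\frac12\|x\|^2+\chi_P(x)$ and $g(z)=\chi_{\bar B(b,\e)}(z)$. The identity $f^\ast(w)=\frac12\|w_+\|^2$ uses Moreau's decomposition $w=w_+ + w_-$ with $w_-\in P^\circ$ and $\langle w_+,w_-\rangle=0$; this is exactly where the closedness of $P$ is needed. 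Since we take $\sup_x(\langle w,x\rangle-\frac12\|x\|^2)$ over $P$, the supremum equals $\frac12\|\pi_P w\|^2$. Strong duality holds when $\e>0$ and $b\in\overline{A(P)}$: there is then $x_0\in P$ with $\|Ax_0-b\|<\e$, and $g$ is continuous at $Ax_0$, which gives the standard qualification condition. The optimality conditions $x=(A^\ast\bar y)_+$ then produce constructive solutions.

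The proofs, however, can be made more direct, and I would write them as follows.

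For (i), $\Rightarrow$: if $(A^\ast y)_+=0$ then $A^\ast y\in P^\circ$, so $\scl{Ax}{y}=\scl{x}{A^\ast y}\le 0$ for all $x\in P$. By continuity this gives $\scl{z}{y}\le0$ on $\overline{A(P)}$, in particular at $z=b$. For $\Leftarrow$: $\overline{A(P)}$ is a closed convex cone, so if $b\notin\overline{A(P)}$, Hahn--Banach separation gives $y$ with $\scl{b}{y}>0\geq \sup_{x\in P}\scl{Ax}{y}$. Hence $\scl{x}{A^\ast y}\le 0$ for all $x\in P$, i.e. $A^\ast y\in P^\circ$, equivalently $(A^\ast y)_+=0$. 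This is a contradiction. (Alternatively, $b\notin\overline{A(P)}$ forces $\inf J_\e=-\infty$ for small $\e$, by scaling such a $y$.)

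For (ii), $\Rightarrow$: if $b=Ax_0$ with $x_0\in P$, then for every $y$,
\[
\scl{b}{y}=\scl{x_0}{A^\ast y}\le \scl{x_0}{(A^\ast y)_+}\le \|x_0\|\,\|(A^\ast y)_+\|,
\]
using $\scl{x_0}{(A^\ast y)_-}\le0$ because $x_0\in P$ and $(A^\ast y)_-\in P^\circ$. So $C=\|x_0\|$ works.

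For (ii), $\Leftarrow$, which is the main step: the bound gives, for each $\e>0$,
\[
J_\e(y)\geq \tfrac12\|(A^\ast y)_+\|^2-C\|(A^\ast y)_+\|\geq -\tfrac{C^2}{2},
\]
uniformly in $\e$. By (i), $b\in\overline{A(P)}$, so strong duality holds for every $\e>0$ and yields primal minimisers $x_\e\in P$ with $\|Ax_\e-b\|\le\e$ and $\frac12\|x_\e\|^2=-\inf J_\e\le C^2/2$. Thus $(x_\e)$ is bounded, and a weakly convergent subsequence has limit $x\in P$, since $P$ is closed and convex, hence weakly closed. Weak continuity of $A$ then gives $Ax=b$.

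The delicate point is justifying strong duality and the attainment of the primal infimum. Attainment follows because the primal problem is strongly convex, the feasible set is closed, convex and nonempty for $\e>0$, and there is no duality gap by the continuity qualification above. If that feels fragile, a fallback is to bypass duality: take any feasible $x$ of minimal norm and show directly, via a separation argument on the set $\{(Ax, \|x\|): x\in P\}$, that its norm is at most $C$.
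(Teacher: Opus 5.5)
Your proof is correct, but it takes a different route from the paper. The paper does not argue directly with the closed cone. It takes the generator $K_r=P\cap\overline{B(0,1)}$, notes that $\sigma_{K_r}(z)=\|z_+\|_X$ (Lemma~\ref{intersect-ball}), and deduces the statement from Theorem~\ref{general-ncs}. There, the reverse implication in (i) comes from showing that $J_\e$ is coercive for every $\e>0$, via a weak lower semicontinuity argument, and then invoking strong duality. The reverse implication in (ii) uses the bound $J_0\geq -C^2/2$ and a single application of Fenchel--Rockafellar at $\e=0$. The qualification condition is placed on the dual side ($\tfrac12\sigma_{K_r}^2$ is continuous everywhere), so the primal problem $(\mathcal P_0)$ is attained as soon as $\inf J_0>-\infty$, and no limiting argument is needed.

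You instead prove (i) by a plain Hahn--Banach separation of $b$ from the closed convex cone $\overline{A(P)}$, with no duality at all. For (ii) you use duality only for $\e>0$, under the standard primal (Slater-type) qualification, which is available precisely because (i) already gives $b\in\overline{A(P)}$. You then get the uniform bound $\|x_\e\|_X\leq C$ on the minimal-norm approximate solutions and pass to the limit $\e\to0$ using weak compactness and the weak closedness of $P$.

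Your route is more elementary and relies only on the textbook form of the duality theorem. The paper's route treats all $\e\geq0$ in one stroke and gives a direct primal--dual link at $\e=0$, which is what the constructive Theorem~\ref{intersect-ball-opt} exploits. It also applies verbatim to non-closed cones $P_r=\cone(K_r)$. There your final step would have to be rephrased with the gauge: use the weak compactness of $C\,K_r\subset P_r$ rather than the weak closedness of the cone.
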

\begin{rmrk}
If $A(P)$ is closed, (i) and (ii) are equivalent and the first solvability condition associated to (i) is clearly equivalent to the negation of item 2. of the standard Farkas' Lemma~\ref{lmm-farkas}, because $z_+ = 0$ if and only if $z \in P^\circ$.

The second solvability condition, given in (ii), is a more compact and workable form of that uncovered in~\cite{HLermaLasserre1997,lasserre1997farkas}. Indeed, one can check that (ii) and Theorem~\ref{thm-Lasserre}[(b)] are actually equivalent as follows: in a Hilbert space and for a closed convex cone $P$, \cz{Moreau's decomposition $z=z_+ + z_-$ with $z_+\in P$, $z_-\in P^\circ$ and $\langle z_+,z_-\rangle=0$ yields
$$
\inf_{p\in P^\circ}\Vert z-p\Vert = \Vert z_+\Vert.
$$
Applying this identity to $z=A^*y$, the distance-to-$P^\circ$ term that appears in Theorem~\ref{thm-Lasserre}[(b)] can be rewritten as $\Vert(A^*y)_+\Vert$, leading to (ii).}
\end{rmrk}

\begin{rmrk}
\cz{
    Taking $P=X$, we recover the well-known fact that
    \[b\in \overline{\ran (A)} \iff \Big(A^\ast y=0 \implies \langle b, y \rangle_Y \leq 0 \Big) \iff \Big(A^\ast y=0 \implies \langle b, y \rangle_Y = 0 \Big) \iff b\perp \ker(A^\ast), \]
 \ie $\ker(A^\ast)^\perp= \overline{\ran (A)}$.

 On the other hand, the necessary and sufficient condition in $(ii)$ corresponds to the well-known condition of range inclusion in functional analysis (see \cite[Lemma 2.4]{li2012optimal}): denoting by $\pi_b$ the projection onto $\R b$, we have
 \[\begin{aligned}\ran(\pi_b)\subset \ran(A) \iff b\in \ran (A) &\iff \exists C>0, \forall y \in Y, \langle b, y\rangle_Y \leq C\|A^\ast y\|_X \\
 &\iff \exists C>0, \forall y \in Y, |\langle b, y\rangle_Y |\leq C\|A^\ast y\|_X \\
 &\iff \exists C>0, \forall y \in Y, \|\pi_b y\|\leq C \|A^\ast y \|_X.
 \end{aligned}\]

 Finally, we recover a well-known sufficient condition for the surjectivity of $A$:
  if there exists $C>0$ such that for $y\in Y, \|A^\ast y\|\geq c\|y\|$, then for $b\in Y$, 
  \[|\langle b, y \rangle |\leq \|b\|\|y\| \leq \frac{\|b\|}{c}\|A^\ast y\|,\]
  which implies $b\in \ran(A)$ by Theorem \ref{intersect-ball-ncs} $(ii)$.
  }
 \end{rmrk}


\subsubsection{Cone generated by a bounded closed convex set}

Note that a mandatory hypothesis for all versions of Farkas' lemma we are aware of is that $P$ is closed: this follows immediately from the usual assumption that $A(P)$ is closed \cite{craven1977generalizations}. In contrast, from this point on, we forego this hypothesis: indeed, the fundamental assumptions underlying our approach concern the choice of a generating set for the cone, rather than the cone itself. 

We now assume that the cone $P$ (denoted $P_r$ in this case) is generated by a bounded closed convex set~$K_r$ containing $0$, i.e., we have $\cone(K_r) = P_r$. 
$P_r$ is then still convex, but it need not be closed. On the other hand, if $P_r$ is closed, setting $K_r = P_r \cap \overline{B(0,1)}$ yields a bounded closed convex set containing $0$ that does generate $P_r$. Hence, this hypothesis is indeed weaker than the closedness of $P_r$, and thus also weaker than the closedness of $A(P_r)$. \cz{As hinted at by the following remark, this significantly extends the class of possible convex cones for which one can formulate a Farkas lemma.}

\begin{rmrk}
    A sufficient condition for the closedness of $P_r = \cone(K_r)$ is for $0$ to be in the interior of $K_r$ relative to $P_r$, see Lemma~\ref{closed_generated_cone}.

    \cz{As a counterexample (in $\R^2$) of a bounded closed convex set containing $0$ which generates a nonclosed cone, note that
    \[\cone\big(\overline{B((1,0), 1)}\big)=(\R_{++}\times \R) \cup \{(0,0)\},\]
    and also that
    \[\cone\left(\overline{B((1,0), 1)} \cap\overline{B((0,1), 1)}\right)=(\R_{++})^2 \cup \{(0,0)\}.\]
    Conversely, these two examples can be generalised to higher dimensions, and show that our approach can help handle exact and approximate resolution in useful non-closed cones, upon choosing appropriate generators.}
\end{rmrk}

We let $\sigma_{K_r} : x \mapsto \sup_{y \in K_r} \langle x, y\rangle_X$ be the support function of $K_r$.
\begin{thrm}
\label{general-ncs}
Let $A \in \mathcal L(X,Y)$, $b \in Y$, $K_r \subset X$ a bounded closed convex set containing $0$, and $P_r=\cone(K_r).$
The following dual characterisations hold for any $b\in Y$:
\begin{itemize}
\item[(i)] \[\begin{aligned}b \in \overline{A( P_r)} \quad &\iff \quad \forall y \in Y, \; \sigma_{K_r}(A^\ast y) = 0 \, \implies \,\scl{b}{y}_Y \leq 0, \\
&\iff \quad \forall y\in Y, \; A^\ast y \in P_r^\circ \, \implies \, \langle b, y \rangle_Y \leq 0. \end{aligned}\]
\item[(ii)] \[b \in A (P_r)  \quad \iff \quad \exists C>0,\; \forall y \in Y,\; \scl{b}{y}_Y \leq C\,\sigma_{K_r}(A^\ast y). \]
\end{itemize}
\end{thrm}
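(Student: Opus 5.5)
We prove both items by Hahn--Banach-type separation, using throughout the identity $\sigma_{K_r}(z)=\sup_{x\in K_r}\langle z,x\rangle_X\geq 0$, which holds because $0\in K_r$. We first record that for $z\in X$,
\[\sigma_{K_r}(z)=0 \iff \langle z,x\rangle_X\leq 0 \ \forall x\in K_r \iff z\in P_r^\circ,\]
where the second equivalence follows since $P_r=\cone(K_r)$ and the inequality is positively homogeneous in $x$. Applied to $z=A^\ast y$, this gives the second equivalence in (i). It therefore remains to prove the first characterisation of (i), and then (ii).

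\emph{Proof of (i).} $A(P_r)$ is a convex cone, so $\overline{A(P_r)}$ is a closed convex cone. Its polar is
\[
\{y:\ \langle Ax,y\rangle_Y\le 0\ \forall x\in P_r\}=\{y:\ A^\ast y\in P_r^\circ\}.
\]
The bipolar theorem in the Hilbert space $Y$ then gives $\overline{A(P_r)}=\{b:\ \langle b,y\rangle_Y\le 0 \text{ whenever } A^\ast y\in P_r^\circ\}$, which is exactly (i).

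\emph{Proof of (ii), direction ($\Rightarrow$).} If $b=Ax$ with $x\in P_r$, write $x=tk$ with $t\geq 0$ and $k\in K_r$. Then $\langle b,y\rangle_Y=t\langle A^\ast y,k\rangle_X\le t\,\sigma_{K_r}(A^\ast y)$. Take $C=\max(t,1)$; this uses $\sigma_{K_r}\ge0$, which also covers the case $t=0$.

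\emph{Proof of (ii), direction ($\Leftarrow$).} Suppose $\langle b,y\rangle_Y\le C\sigma_{K_r}(A^\ast y)$ for all $y\in Y$. We show $b\in A(CK_r)\subset A(P_r)$. The set $CK_r$ is bounded, closed and convex, hence weakly compact in the Hilbert space $X$. Since $A$ is weak-weak continuous, $A(CK_r)$ is weakly compact and convex, hence closed and convex. Assume for contradiction that $b\notin A(CK_r)$. Strict separation then yields $y\in Y$ with
\[
\langle b,y\rangle_Y>\sup_{k\in CK_r}\langle Ak,y\rangle_Y=C\,\sigma_{K_r}(A^\ast y),
\]
which contradicts the hypothesis. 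Hence $b\in A(CK_r)$.

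The one delicate point is the closedness of $A(CK_r)$. This is exactly where boundedness and closedness of $K_r$ are needed, and where non-closedness of $P_r$ or $A(P_r)$ is bypassed: we never separate from $A(P_r)$ itself, only from the weakly compact set $A(CK_r)$. This separation argument is non-constructive; the paper's primal--dual approach via $\mathcal{D}_\e$ would instead produce $x$ from a dual minimiser.
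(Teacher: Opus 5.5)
Your proof is correct, and it takes a genuinely different route from the paper.

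\textbf{The paper's route.} The paper proves both converse implications through its duality machinery.
\begin{itemize}
\item For (i), it shows that the separation condition makes $J_\e$ coercive for every $\e>0$ (Lemma~\ref{lem-coercivity-interpretation}).
\item For (ii), it uses the estimate to get $J_0(y)\geq \tfrac12\sigma_{K_r}^2(A^\ast y)-C\,\sigma_{K_r}(A^\ast y)$, which is bounded below (Lemma~\ref{lem-continuation-interpretation}).
\item In both cases, Fenchel--Rockafellar strong duality (Proposition~\ref{strong-duality}) turns $\inf J_\e>-\infty$ into finiteness, hence attainment, of $\pi_\e$. That is, it produces a feasible $x\in P_r$.
\end{itemize}

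\textbf{Your route.} You use the bipolar theorem for (i), and a direct strict separation of $b$ from the weakly compact convex set $A(CK_r)$ for (ii).

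\textbf{What your argument gains.} It is shorter and more transparent in three respects.
\begin{itemize}
\item It shows that the intrinsic form of (i) holds for an arbitrary convex cone. The generator $K_r$ plays no role beyond rewriting the condition $A^\ast y\in P_r^\circ$ as $\sigma_{K_r}(A^\ast y)=0$.
\item It isolates exactly where boundedness and closedness of $K_r$ enter (ii): the weak compactness of $CK_r$. In the paper this ingredient is buried in the lower semicontinuity of $j_{K_r}$ (Lemma~\ref{conj-supp-square}), which is needed to apply Fenchel--Rockafellar.
\item It gives a quantitative by-product. The estimate with constant $C$ yields a preimage $x\in CK_r$, i.e.\ one with $j_{K_r}(x)\leq C$. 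Combined with your ($\Rightarrow$) computation, this recovers the paper's best-constant statement $C=\sqrt{2\pi_0}$ without any duality.
\end{itemize}
Your choice $C=\max(t,1)$ also handles the degenerate case $b=0$, where the paper's choice $C=j_{K_r}(x)$ can vanish.

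\textbf{What the paper's route buys.} Its benefit is not the theorem itself but what comes with it. The coercivity estimate is exactly what gives existence and uniqueness of dual minimisers for $\e>0$. Together with Fenchel--Rockafellar, it also gives the reconstruction formula $x^\star_\e\in\sigma_{K_r}(A^\ast y^\star_\e)\,\partial\sigma_{K_r}(A^\ast y^\star_\e)$ of Theorem~\ref{convex-general-opt}. This constructive character is the paper's main point, and your separation argument, as you note yourself, does not provide it.
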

\begin{rmrk}
 The dual characterisation $(i)$ has two possible formulations due to the fact that $\sigma_{K_r}(z) = 0$ if and only if $z \in P_r^\circ$ (see Lemma~\ref{lem-separation-interpretation}). The second formulation illustrates the direct link between Theorem~\ref{general-ncs} and previous generalisations of Farkas' Lemma. 
\end{rmrk}

\begin{rmrk}  \label{rq-Kr-is-the-ball}
The two theorems~\ref{intersect-ball-ncs} and~\ref{intersect-ball-opt} above given for a closed and convex cone $P_r$ correspond to choosing $K_r = P_r \cap \overline{B(0,1)}$ and applying Theorem \ref{general-ncs}.
\end{rmrk}

\subsection{Constructive characterisations}
Our proof of the direct implications $\implies$ in theorems \ref{intersect-ball-ncs} and \ref{general-ncs} above is based on Fenchel-Rockafellar duality~\cite{Rockafellar1970}. We denote by $\delta_C$ the indicator of a set $C \subset X$, in the convex-analytic sense, and we let $j_{K_r} = \inf\{\alpha > 0, \, x \in \alpha K_r\}$ be the gauge function of $K_r$.

For $\e\geq 0$, we introduce the auxiliary functional
\[
\forall x \in X, \quad \mathcal{F}_\e(x):= \tfrac{1}{2} j_{K_r}^2(x) + \delta_{\{x \in X, \;  \|A x - b\|_Y \leq \e\}}(x).\]

Because $j_{K_r}(x) < +\infty$ if and only if $x \in P_r$,  the expression of $\mathcal{F}_\e$ is such that, for a given $\e\geq 0$
\[\forall x \in X, \quad \mathcal{F}_\e(x)< +\infty \ \iff \ \begin{cases}
    \|Ax-b\|_Y\leq \e\\
    x\in P_r
    \end{cases}.\]

A natural way of finding such an $x$ is to minimise $\mathcal{F}_\e$, we thus introduce the auxiliary optimisation problem
\begin{equation}\tag{$\mathcal{P}_{\e}$}
\label{primal-general}\inf_{x\in X} \mathcal{F}_\e(x).\end{equation}
We then have
\begin{equation}
    x^\star_\e \ \text{is a minimiser for} \ \eqref{primal-general} \ \implies \ \begin{cases}\|Ax^\star_\e - b \|_Y \leq \e, \\ x_\e^\star \in P_r. \end{cases}
        \label{why-aux-prob}
\end{equation}
Thus, solving the optimisation problem \eqref{primal-general} for all $\e>0$ solves \eqref{approx-Farkas-problem}, and solving it for $\e=0$ yields a solution to \eqref{exact-Farkas-problem}.

To solve this optimisation problem, we introduce its dual problem, following Fenchel-Rockafellar duality theory. For $\e\geq 0$, we show that it is given by
\begin{equation}
\label{dual-general}
\tag{$\mathcal{D}_{\e}$}\inf_{y \in Y} J_\e(y), \qquad J_\e(y) := 
\tfrac{1}{2} \sigma_{K_r}^2(A^\ast y) - \scl{b}{y}_Y + \e \|y\|_Y.
\end{equation}

\cz{\begin{rmrk}
    An important feature of this dual problem is that its cost function $J_\e$ is unconstrained (finite everywhere), thanks to our choice of the auxiliary optimisation problem, which involves the function $\tfrac12j_{K_r}^2$. This function has domain $P$, but its convex conjugate $\tfrac12 \sigma_{K_r}^2$ always has domain $X$.
    
    This is numerically relevant, and in contrast with the dual problems formulated in \cite{zualinescu2008zero, zualinescu2012duality}, which are linearly constrained. 
\end{rmrk}
We establish strong duality between the primal and dual problems, namely: \[\inf_{x\in X} \mathcal{F}_\e(x) = - \inf_{y \in Y} J_\e(y).\]
}

The optimality conditions derived for these primal dual pairs of optimisation problems then lead to constructive characterisations of solutions, when they exist. 

\cz{
\begin{rmrk}
    If $b \in A(P_r)$, the best constant $C= C(b)$ in Theorem~\ref{general-ncs} is given by (see Proposition \ref{ncs-exact}):
    \begin{align*} C(b) := & \inf\{j_{K_r}(x),\; Ax =b, \, x\in P_r\}. \end{align*}
    If $b\in A(P_r)$, so is any positive multiple thereof, and $C(b)$ is clearly $1$-positively homogeneous. 
    
    By strong duality, $\tfrac{1}{2}C^2(b)$ also equals $-\inf\{{\tfrac{1}{2} \sigma_{K_r}^2(A^\ast y) - \scl{b}{y}_Y , \; y \in Y\}}$. As an infimum of continuous functions, $b \mapsto C(b)$ is upper-semicontinuous on $A(P_r)$.
    
    If $P_r=X$ and $K_r = \overline{B(0,1)}$, $C(b)$ equals the minimal norm solution for the equation $Ax=b$. It also equals $\|A^\dagger b\|_X$, whenever the Moore-Penrose inverse $A^\dagger$ is well-defined (that is, when $\mathrm{Ran}(A)$ is closed).
\end{rmrk}
}

\subsubsection{Closed convex cone}

As noted in Remark \ref{rq-Kr-is-the-ball}, the case of a closed convex cone $P$ is a particular case of a cone $P_r$ generated by a bounded closed convex set $K_r$. Taking $K_r:=P\cap \overline{B(0,1)}$, the primal problem \eqref{primal-general} writes
\begin{equation}\tag{$\mathcal{P}^{\text{cl}}_\e$} \label{primal-closed}\inf_{x\in X}\mathcal{F}^{\text{cl}}_\e(x), \quad \mathcal{F}^{\text{cl}}_\e(x):=\tfrac{1}{2}\|x\|_X^2 + \delta_{P}(x) +  \delta_{\{x \in X, \;  \|A x-b\|_Y \leq\e\}}(x),
\end{equation}
and its dual problem writes
\begin{equation}
\label{dual-closed}
\tag{$\mathcal{D}_{\e}^\mathrm{cl}$}
\inf_{y \in Y} J_\e^\mathrm{cl}(y), \qquad J_\e(y) := \tfrac{1}{2}\|(A^\ast y)_+\|^2_X -\scl{b}{y}_Y + \e \|y\|_Y,
\end{equation}
where we recall that $z_+$ means the projection of a vector $z \in X$ onto $P$.

As described above, solving this primal-dual pair of optimisation problems leads to solving \eqref{approx-Farkas-problem} and~\eqref{exact-Farkas-problem}, thanks to~\eqref{why-aux-prob}, and yields constructive characterisations of solutions. Since these auxiliary problems involve elementary functions, the constructive characterisations we obtain are remarkably direct:
\begin{thrm} 
\label{intersect-ball-opt}
Under the hypotheses of Theorem \ref{intersect-ball-ncs}, for any $b\in Y$, 
\begin{itemize}
\item[(i)] If $b \in \overline{A(P)}$, then for any $\e>0$, 
\begin{itemize}
\item$(\mathcal{P}_{\e}^\mathrm{cl})$ admits a unique solution $x_\e^\star \in P$, such that  
$\|A x_\e^\star-b\|_Y \leq \e$.
\item $(\mathcal{D}_{\e}^\mathrm{cl})$ admits a unique solution~$y_\e^\star$, and 
\[x_\e^\star := (A^\ast y_\e^\star)_+.\]
\end{itemize}

\item[(ii)] If $b \in A(P)$, then \begin{itemize}
\item$(\mathcal{P}_{0}^\mathrm{cl})$ admits a unique solution $x^\star\in P$, such that $Ax^\star=b$. \item\textbf{If furthermore $(\mathcal{D}_{0}^\mathrm{cl})$ admits a solution $y^\star$}, then
\[x^\star := (A^\ast y^\star)_+.\]
\end{itemize}

\end{itemize}
\end{thrm}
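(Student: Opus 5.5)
We work directly with the primal--dual pair $(\mathcal{P}_\e^{\mathrm{cl}})$--$(\mathcal{D}_\e^{\mathrm{cl}})$ and invoke Fenchel--Rockafellar duality. Write $\mathcal{F}^{\mathrm{cl}}_\e = f + g\circ A$, with
\[
f(x)=\tfrac12\|x\|_X^2+\delta_P(x), \qquad g(z)=\delta_{\overline{B(b,\e)}}(z).
\]
The conjugates are computed first. Moreau's decomposition $z=z_++z_-$ gives $f^\ast(z)=\sup_{x\in P}\big(\langle z,x\rangle-\tfrac12\|x\|^2\big)=\tfrac12\|z_+\|^2$, attained exactly at $x=z_+$, so that $\nabla f^\ast(z)=z_+$. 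Moreover $g^\ast(y)=\langle b,y\rangle+\e\|y\|$. Hence the dual $\inf_y f^\ast(A^\ast y)+g^\ast(-y)$ is precisely $(\mathcal{D}_\e^{\mathrm{cl}})$.

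\textbf{Primal existence and uniqueness.} Assume $b\in\overline{A(P)}$ and $\e>0$, or $b\in A(P)$ and $\e=0$. In both cases the feasible set $\{x\in P,\ \|Ax-b\|\le\e\}$ is nonempty, closed and convex. Since $P$ is closed, $\mathcal{F}^{\mathrm{cl}}_\e$ is proper, lower semicontinuous and $1$-strongly convex. It therefore has a unique minimiser $x_\e^\star$ (the projection of $0$ onto the feasible set), and by \eqref{why-aux-prob} this minimiser satisfies $x_\e^\star\in P$ and $\|Ax_\e^\star-b\|\le\e$. This settles the first bullet of both (i) and (ii).

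\textbf{Dual existence, strong duality and recovery for $\e>0$.} When $\e>0$ and $b\in\overline{A(P)}$, there is some $x_0\in P$ with $\|Ax_0-b\|<\e$. Since $f$ is finite at $x_0$ and $g$ is continuous at $Ax_0$, the qualification condition holds. Fenchel--Rockafellar duality then gives the absence of a duality gap and the attainment of the dual infimum, so $(\mathcal{D}_\e^{\mathrm{cl}})$ has a solution $y_\e^\star$.

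The extremality relations read $A^\ast y_\e^\star\in\partial f(x_\e^\star)$, equivalently $x_\e^\star\in\partial f^\ast(A^\ast y_\e^\star)=\{(A^\ast y_\e^\star)_+\}$, together with $-y_\e^\star\in\partial g(Ax_\e^\star)$. This yields $x_\e^\star=(A^\ast y_\e^\star)_+$.

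For uniqueness of $y_\e^\star$, I expect the main obstacle: $J_\e^{\mathrm{cl}}$ is not strictly convex in general. My plan is as follows. Any two dual solutions $y_1,y_2$ both satisfy the extremality relations with the unique primal solution, so $(A^\ast y_i)_+=x_\e^\star$ for $i=1,2$, and $-y_i$ lies in the normal cone to $\overline{B(b,\e)}$ at $Ax_\e^\star$.

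We distinguish two cases. If $\|Ax_\e^\star-b\|<\e$, the normal cone is $\{0\}$, so $y_1=y_2=0$. Otherwise $y_i=-\lambda_i(Ax_\e^\star-b)$ with $\lambda_i\ge0$. Equality of the dual values then gives
\[
-\langle b,y_1\rangle+\e\|y_1\| = -\langle b,y_2\rangle+\e\|y_2\|,
\]
since the quadratic terms coincide. A short computation reduces this to $\lambda_1\,c=\lambda_2\,c$ with $c=\langle b,Ax_\e^\star-b\rangle+\e^2=\langle b, Ax_\e^\star\rangle$, using $\|Ax_\e^\star-b\|=\e$. To conclude we need $c\neq 0$. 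This should follow from $c=\langle Ax^\star_\e,y\rangle$-type identities, namely $\langle A x_\e^\star, y\rangle=\|x_\e^\star\|^2$ obtained from Moreau's decomposition; alternatively, we could argue directly that the boundary case with $x_\e^\star\ne0$ pins down $\lambda$ via $\|(A^\ast y)_+\|$ scaling linearly in $\lambda$. If this step resists, this is where I would spend effort.

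\textbf{Case $\e=0$.} Here no Slater point is available, so dual attainment is not claimed; it is the hypothesis in (ii). The plan for the recovery formula is to obtain weak duality $\mathcal{F}^{\mathrm{cl}}_0(x)\ge -J_0^{\mathrm{cl}}(y)$ from Fenchel--Young, and strong duality (zero gap) by, for instance, letting $\e\to0$ in the previous case or using the fact that $f$ is coercive with a continuous conjugate. Given a dual solution $y^\star$, the zero gap forces equality in Fenchel--Young, i.e.
\[
\tfrac12\|x^\star\|^2+\tfrac12\|(A^\ast y^\star)_+\|^2=\langle x^\star,A^\ast y^\star\rangle,
\]
which identifies $x^\star=\nabla f^\ast(A^\ast y^\star)=(A^\ast y^\star)_+$.
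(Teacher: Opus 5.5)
Most of your proposal is sound, and in places more direct than the paper:
\begin{itemize}
\item primal existence and uniqueness as the projection of $0$ onto the nonempty closed convex feasible set (the paper uses Fenchel--Rockafellar attainment plus strict convexity of the sphere);
\item dual attainment for $\varepsilon>0$ from a primal Slater point (the paper instead proves coercivity of $J_\varepsilon$ from the Farkas condition);
\item the recovery $x_\varepsilon^\star=(A^\ast y_\varepsilon^\star)_+$ via $\partial f^\ast(z)=\{z_+\}$;
\item the case $\varepsilon=0$, where your second option for the zero gap (continuity of $f^\ast$) is exactly the paper's dual-side qualification.
\end{itemize}
The problem is the uniqueness of $y_\varepsilon^\star$. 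Your first expression $c=\langle b,Ax_\varepsilon^\star-b\rangle_Y+\varepsilon^2$ is right, but its simplification is not. Using $\varepsilon^2=\|Ax_\varepsilon^\star-b\|_Y^2$ gives $c=\langle Ax_\varepsilon^\star,Ax_\varepsilon^\star-b\rangle_Y$, not $\langle b,Ax_\varepsilon^\star\rangle_Y$. With the correct $c$, the identity you hint at settles the case $x_\varepsilon^\star\neq0$, without even comparing dual values. By Moreau's decomposition, $\langle Ax_\varepsilon^\star,y_i\rangle_Y=\langle (A^\ast y_i)_+,A^\ast y_i\rangle_X=\|x_\varepsilon^\star\|_X^2$, while the left-hand side equals $-\lambda_i c$. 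Hence $\lambda_i c=-\|x_\varepsilon^\star\|_X^2\neq0$, which forces $c\neq0$ and determines $\lambda_i$.

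What is genuinely missing is the boundary case with $x_\varepsilon^\star=0$, i.e.\ $\|b\|_Y=\varepsilon$, which you explicitly set aside. There $c=0$, so your argument yields nothing. You only know $y_i=\lambda_i b$ with $\lambda_i (A^\ast b)_+=0$, and all these dual values vanish.

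Uniqueness in this case needs one more use of the hypothesis. If some $\lambda_i>0$, then $A^\ast b\in P^\circ$. So $\langle Ax,b\rangle_Y\le0$, and hence $\|Ax-b\|_Y\ge\|b\|_Y=\varepsilon$, for every $x\in P$. This contradicts your Slater point (equivalently, $b\in\overline{A(P)}$ would force $\|b\|_Y^2\le0$). Thus $y_i=0$.

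This step cannot be skipped, because the relations your argument relies on do not suffice by themselves. Take $X=Y=\R$, $P=\R_+$, $Ax=-x$, $b=1$, $\varepsilon=1$, where $b\notin\overline{A(P)}$. The unique primal solution is $0$, yet every $y\ge0$ minimises $J_1^{\mathrm{cl}}$. In the paper this case is the restriction $J_\varepsilon(\lambda b)=\tfrac12\lambda^2\|(A^\ast b)_+\|_X^2$, and the positivity of that coefficient is precisely the fact above.
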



\cz{
\paragraph{Least-norm (conic) pseudoinverse viewpoint.}
When $\varepsilon=0$ and $b\in A(P)$, problem $(\mathcal P_0^{\mathrm{cl}})$ computes the minimum-norm solution of the conic system $Ax=b$, $x\in P$. In this closed-cone setting, it is hence natural to define the cone-constrained least-norm pseudoinverse $A_P^\dagger(b) : A(P) \to P$ of $A$ by
$$
\forall b \in A(P), \quad 
\{A_P^\dagger(b)\} = \argmin \{ \Vert x\Vert_X,\; Ax=b,\, x\in P \}.
$$
When $P=X$ and $\mathrm{Ran}(A)$ is closed, this coincides with the usual Moore-Penrose pseudoinverse $A^\dagger$ on $\mathrm{Ran}(A)$. Thus, Theorem 1.4 does not merely prove solvability: it canonically selects, among all conic solutions, the least-norm one, and provides the explicit primal-dual representation $A_P^\dagger(b) = (A^*y^\star(b))_+$, whenever a dual minimiser $y^\star(b)$ exists. This also suggests a practical numerical procedure: solve the unconstrained dual problem $(\mathcal D^{\mathrm{cl}}_0)$ and recover the primal variable by projecting $A^\ast y^\star(b)$ onto $P$.

}

\subsubsection{Cone generated by a bounded closed convex set}
In the more general case of a cone $P_r$ generated by a bounded closed convex set $K_r$ containing $0$, the constructive characterisation is, accordingly, more involved. 
\begin{thrm}
\label{convex-general-opt}
Under the hypotheses of Theorem \ref{general-ncs}, for any $b\in Y$, 
\begin{itemize}
\item[(i)] If $b \in \overline{A(P_r)}$, then for any $\e>0$, 
\begin{itemize}
\item$(\mathcal{P}_\e)$ admits a solution $x^\star_\e\in P_r$ such that $\|Ax^\star_\e-b\|_Y \leq \e$, 
\item $(\mathcal{D}_\e)$ admits a unique solution $y_\e^\star$, and
\begin{equation}\label{opt-cond-approx-conv-gen}x_\e^\star \in \sigma_{K_r}(A^\ast y_\e^\star) \, \partial \sigma_{K_r}(A^\ast y_\e^\star).\end{equation}
\end{itemize}
\item[(ii)] If $b \in A(P_r)$, then 
\begin{itemize}
\item$(\mathcal{P}_0)$ admits a solution $x^\star\in P_r$ such that $Ax^\star=b$,
\item if furthermore $(\mathcal{D}_0)$ admits a solution $y^\star$, then 
\begin{equation}\label{opt-cond-exact-conv-gen}x^\star \in \sigma_{K_r}(A^\ast y^\star) \, \partial \sigma_{K_r}(A^\ast y^\star).\end{equation}
\end{itemize}
\end{itemize}
\end{thrm}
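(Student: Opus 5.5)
The plan is to apply Fenchel--Rockafellar duality to the splitting $\mathcal F_\e = f + g\circ A$. Here $f:=\tfrac12 j_{K_r}^2$ on $X$, and $g:=\delta_{\overline{B(b,\e)}}$ on $Y$.

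\textbf{Conjugates.} First compute $f^\ast=\tfrac12\sigma_{K_r}^2$. This follows from the polar duality between gauge and support function, using that $K_r$ is closed, convex and contains $0$; the bipolar theorem gives $K_r^{\circ\circ}=K_r$. Since $K_r$ is bounded, $\sigma_{K_r}$ is finite and Lipschitz on $X$, so $f^\ast$ is finite and continuous everywhere. Next, $g^\ast(y)=\scl{b}{y}_Y+\e\|y\|_Y$. The Fenchel--Rockafellar dual of $(\mathcal P_\e)$ is then $-\inf_y \big(f^\ast(A^\ast y)+g^\ast(-y)\big)=-\inf_y J_\e(y)$.

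\textbf{Strong duality and existence.} The qualification condition will be applied on the dual side, i.e.\ to $(\mathcal D_\e)$ viewed as a primal problem. Its first term $y\mapsto \tfrac12\sigma_{K_r}^2(A^\ast y)$ is finite and continuous on all of $Y$, which is exactly the standard condition. This gives $\inf J_\e = -\inf \mathcal F_\e^{\ast\ast}$, with the infimum over $X$ attained whenever it is finite. Since $f$ and $g\circ A$ are proper, convex and lower semicontinuous, the biconjugate of the dual recovers $(\mathcal P_\e)$. Two points are needed here:
\begin{itemize}
\item lower semicontinuity of $j_{K_r}^2$, which holds because its sublevel sets $\alpha K_r$ are closed;
\item properness of $\mathcal F_\e$.
\end{itemize}

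For (i), $b\in\overline{A(P_r)}$ gives some $x\in P_r$ with $\|Ax-b\|<\e$. This is a Slater point, so $\inf(\mathcal P_\e)$ is finite. The alternative route is the direct method: $\mathcal F_\e$ is coercive on $X$ because $j_{K_r}(x)\ge \|x\|/\sup_{K_r}\|\cdot\|$, and it is weakly lsc as a convex lsc function, so a minimiser exists. With the Slater point, the constraint qualification also holds on the primal side: $g$ is continuous at $Ax\in\operatorname{dom} g$ with $x\in\operatorname{dom} f$. Hence the dual problem attains its infimum, which gives existence of $y_\e^\star$.

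For (ii), the same direct method gives existence of a minimiser of $(\mathcal P_0)$ as soon as some $x\in P_r$ with $Ax=b$ exists. No dual attainment is available in general, since there is no Slater point for $\e=0$; this is why the dual existence is stated as a hypothesis.

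\textbf{Uniqueness of $y_\e^\star$.} This is the step I expect to be the main obstacle, because $J_\e$ is not obviously strictly convex. My plan is to use the extremality relations to show that all dual minimisers share the same value $A^\ast y$ up to normalisation, and then use strict convexity coming from the term $\e\|y\|$ on rays. Concretely:
\begin{itemize}
\item From $-y^\star\in\partial g(Ax^\star)$, the vector $y^\star$ is a nonnegative multiple of the outward normal $(Ax^\star_\e-b)/\e$ whenever $\|Ax^\star_\e - b\|=\e$.
\item The point $Ax^\star$ does not depend on which minimiser is chosen, because $\tfrac12 j^2$ has convexity properties along the segment. This needs care.
\item The multiplier is then fixed by the equality $\scl{A x^\star}{y}=\sigma_{K_r}(A^\ast y)^2$, which follows from the optimality conditions.
\item If instead $y^\star=0$ for some dual minimiser, I will check that the minimal value $0$ forces $\sigma_{K_r}(A^\ast y)^2$ to be linear in $y$ along lines, giving uniqueness.
\end{itemize}

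\textbf{Primal recovery.} Once a dual solution $y^\star$ exists and strong duality holds, the extremality condition reads $x^\star\in\partial f^\ast(A^\ast y^\star)$. The chain rule for $\tfrac12\sigma^2$ is valid because $\sigma_{K_r}\ge 0$ is continuous and convex, and it gives $\partial(\tfrac12\sigma_{K_r}^2)(z)=\sigma_{K_r}(z)\,\partial\sigma_{K_r}(z)$. This yields \eqref{opt-cond-approx-conv-gen} and, in the case $\e=0$, \eqref{opt-cond-exact-conv-gen}.
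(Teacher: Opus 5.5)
\paragraph{Verdict.} There is a genuine gap, and it is in the uniqueness of $y^\star_\e$. The rest of your proposal is sound. Your existence arguments take a different route from the paper: you get the primal minimiser by the direct method (coercivity from $j_{K_r}(x)\ge \|x\|_X/\sup_{K_r}\|\cdot\|_X$), and dual attainment from a Slater point $x\in P_r$ with $\|Ax-b\|_Y<\e$. The paper instead proves coercivity of $J_\e$ (Lemma~\ref{lem-coercivity-interpretation}) and obtains primal attainment from strong duality. Your primal recovery via $\partial(\tfrac12\sigma_{K_r}^2)=\sigma_{K_r}\,\partial\sigma_{K_r}$ is also fine.

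\paragraph{The mechanisms you propose for uniqueness are not the operative ones.}
First, uniqueness of $Ax^\star$ cannot come from convexity of $\tfrac12 j_{K_r}^2$. That function is not strictly convex in general: for polyhedral $K_r$ the primal minimisers can form a segment, which is why the statement only asserts existence of $x^\star_\e$. In the paper, uniqueness of $Ax^\star$ comes from strict convexity of the Hilbert ball $\overline{B(b,\e)}$, since all the $Ax^\star$ lie on its boundary sphere when $\|b\|_Y>\e$. You can actually skip this step. With zero duality gap, every primal minimiser pairs with every dual minimiser, so fixing one $x^\star$ puts all dual minimisers on the half-line $\R_+ d$, with $d:=b-Ax^\star$. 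Note this is the inward normal, since $-y^\star\in N_{\overline{B(b,\e)}}(Ax^\star)$; you wrote outward.

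Second, $\e\|y\|_Y$ is linear on rays, so it provides no strict convexity there. On the half-line, $J_\e(\lambda d)=a_1\lambda^2+a_2\lambda$ with $a_1=\tfrac12\sigma_{K_r}^2(A^\ast d)$. Your multiplier equation $\lambda\scl{Ax^\star}{d}_Y=\lambda^2\sigma_{K_r}^2(A^\ast d)$ determines nothing when $\sigma_{K_r}(A^\ast d)=0$.

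\paragraph{Where the hypothesis must enter.} The degenerate case $\sigma_{K_r}(A^\ast d)=0$ is exactly where $b\in\overline{A(P_r)}$ is needed, and your uniqueness sketch never uses it. Without it, uniqueness fails even though dual minimisers exist. Take $X=Y=\R$, $A=\mathrm{Id}$, $K_r=[0,1]$, $b=-1$, $\e=1$. Then $J_1(y)=\tfrac12\max(y,0)^2+y+|y|$ vanishes on all of $(-\infty,0]$. Here $x^\star=0$ and $d=b$, so $\sigma_{K_r}(A^\ast d)=0$. Your ``linear along lines'' bullet describes precisely this situation, so it cannot conclude on its own.

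\paragraph{How to close the gap.} A case split on $\|b\|_Y$ versus $\e$ does it:
\begin{enumerate}
\item If $\|b\|_Y<\e$, then $Ax^\star=0$ lies in the open ball, so the normal cone is $\{0\}$ and $y^\star=0$.
\item If $\|b\|_Y>\e$, then $0$ is infeasible, so $\min J_\e=-\pi_\e<0$. This forces $a_2<0$, and then $a_1>0$ by boundedness below, so $\lambda^\star=-a_2/(2a_1)$ is the unique minimiser on the half-line.
\item If $\|b\|_Y=\e$, then $x^\star=0$ and $d=b$. Here $\sigma_{K_r}(A^\ast b)>0$, since otherwise Theorem~\ref{general-ncs}~(i) would give $\|b\|_Y^2\le 0$. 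Hence $J_\e(\lambda b)=a_1\lambda^2$ with $a_1>0$, minimised only at $\lambda=0$.
\end{enumerate}

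\paragraph{An alternative closer to your intuition.} Suppose $y_1\ne y_2$ both minimise $J_\e$.
\begin{enumerate}
\item $J_\e$ is then constant on $[y_1,y_2]$, and since its terms are convex, each is affine there.
\item Affinity of $\|\cdot\|_Y$ forces $y_1,y_2\in\R_+d$ for some unit $d$. This uses strict convexity of the Hilbert norm across directions, not along rays.
\item Affinity of $\lambda\mapsto\tfrac12\lambda^2\sigma_{K_r}^2(A^\ast d)$ then forces $\sigma_{K_r}(A^\ast d)=0$.
\item By Theorem~\ref{general-ncs}~(i), this gives $\scl{b}{d}_Y\le 0$.
\item So $\lambda\mapsto J_\e(\lambda d)=\lambda(\e-\scl{b}{d}_Y)$ is strictly increasing, contradicting constancy on the segment.
\end{enumerate}
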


\cz{\begin{rmrk}
For $\e>0$, the dual variable $y^\star_\varepsilon$ can be interpreted as a normal-cone Lagrange multiplier associated with the feasibility constraint $Ax\in \overline{B(b,\varepsilon)}$. Indeed,
from the saddle-point conditions \eqref{saddle-point} one has
$y^\star_\varepsilon \in -\partial\delta_{\overline{B(b,\varepsilon)}}(A x^\star_\varepsilon) = -N_{\overline{B(b,\varepsilon)}}(A x^*_\varepsilon)$, where $N_{\overline{B(b,\varepsilon)}}(A x^*_\varepsilon)$ is the normal cone to $\overline{B(b,\varepsilon)}$ at $A x^*_\varepsilon$.
In particular, if the constraint is active (and we will show it necessarily is when $\|b\| > \e$), i.e., $\|A x^\star_\varepsilon-b\|_Y=\varepsilon$, then $ y^\star_\varepsilon = \lambda_\varepsilon\,\frac{b-Ax^\star_\varepsilon}{\|b-Ax^\star_\varepsilon\|_Y}$ for some $\lambda_\varepsilon\geq 0$, which is the usual complementary-slackness relation for a (Hilbertian)-ball constraint.
\end{rmrk}}

Note that the optimality conditions \eqref{opt-cond-approx-conv-gen}--\eqref{opt-cond-exact-conv-gen} are only necessary: if the subdifferentials involved contain more than one vector, there is \textit{a priori} no way to determine which one the actual solution $x_\e^\star$ (or $x^\star$) is. Thus, for the previous result to be more constructive, it is relevant to study tractable sufficient conditions ensuring that these subdifferentials are reduced to singletons (\textit{i.e.,} actually are gradients). In this case, the above inclusion then uniquely determines $x_\e^\star$ (or $x^\star$). 

\cz{
\paragraph{Singular vectors.} Let us introduce the set of \textit{singular vectors} $\operatorname{sing}(K_r)$ of $K_r$, that is, the set of vectors $z \in X$ such that the above subdifferential $\partial \sigma_{K_r}(z)$ is not reduced to a singleton. Equivalently, this corresponds to $\sigma_{K_r}$ failing to be differentiable at $z$. For instance, if $K_r$ is a polytope, singular vectors are exactly those normals that expose a face of dimension $\ge 1$ (an edge, a facet, etc).
}

We are thus led to the hypothesis
\begin{equation}
\tag{$\text{H}$}
\label{extremal}
\forall y \in Y,  \quad  A^\ast y  \in \sing(K_r)^c \cup P_r^\circ,
\end{equation}
which can also be written as $\mathrm{Ran}(A^\ast)\subset \sing(K_r)^c \cup P_r^\circ$.

Assumption~\ref{extremal} can be seen as a non-degeneracy condition ensuring that the support-function subgradient used in \eqref{opt-cond-approx-conv-gen}--\eqref{opt-cond-exact-conv-gen} for the reconstruction of $x_\varepsilon^\star$ is single-valued along $\mathrm{Ran}(A^*)$.

\begin{prpstn}\label{prop-opt-cond-ncs}
    Under \eqref{extremal}, conditions \eqref{opt-cond-approx-conv-gen} and \eqref{opt-cond-exact-conv-gen} become, respectively,
    \begin{equation}
        \label{opt-cond-approx-single}
        \{x_\e^\star\}=\sigma_{K_r}(A^\ast y_\e^\star)\,\partial \sigma_{K_r}(A^\ast y_\e^\star),
    \end{equation}
    and 
    \begin{equation}
        \label{opt-cond-exact-single}
        \{x^\star\}=\sigma_{K_r}(A^\ast y^\star)\,\partial \sigma_{K_r}(A^\ast y^\star).
    \end{equation}
    That is, these optimality conditions are necessary and sufficient.
\end{prpstn}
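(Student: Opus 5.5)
The plan is to take the inclusions \eqref{opt-cond-approx-conv-gen}--\eqref{opt-cond-exact-conv-gen} of Theorem~\ref{convex-general-opt} as given. It then suffices to show that, under \eqref{extremal}, the set $\sigma_{K_r}(A^\ast y)\,\partial\sigma_{K_r}(A^\ast y)$ is a singleton for $y=y_\e^\star$ (resp.\ $y=y^\star$). An inclusion $x^\star_\e\in S$ with $S$ a singleton is then an equality $\{x^\star_\e\}=S$, which proves both identities. The same argument also yields sufficiency: any $x$ satisfying the condition coincides with the primal minimiser. We may also recall that primal minimisers are unique in this case, since they are all determined by the unique dual minimiser.

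We write $z=A^\ast y$ and, by \eqref{extremal}, distinguish two cases.

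\emph{Case 1: $z\in\sing(K_r)^c$.} By definition of singular vectors, $\partial\sigma_{K_r}(z)$ is a singleton $\{g\}$. It is nonempty because $\sigma_{K_r}$ is finite, convex and continuous on $X$ ($K_r$ being bounded), so $\sigma_{K_r}$ is Lipschitz. Hence $\sigma_{K_r}(z)\partial\sigma_{K_r}(z)=\{\sigma_{K_r}(z)g\}$ is a singleton.

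\emph{Case 2: $z\in P_r^\circ$.} Lemma~\ref{lem-separation-interpretation} gives $\sigma_{K_r}(z)=0$. Since $\partial\sigma_{K_r}(z)$ is nonempty (it contains points of $K_r$, and is a bounded subset of $K_r$), we get $0\cdot\partial\sigma_{K_r}(z)=\{0\}$. This is a singleton regardless of whether $z$ is singular, and it forces $x^\star=0$.

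The only delicate point is Case 2, where $z$ may well be singular, for instance $z=0$ with $\partial\sigma_{K_r}(0)=K_r$. The argument relies on the scalar factor $\sigma_{K_r}(z)$ vanishing, together with the nonemptiness of the subdifferential, which follows from finiteness and continuity of $\sigma_{K_r}$ on the whole of $X$. I would make both facts explicit. Indeed, if $\partial\sigma_{K_r}(z)$ were empty, the product set would be empty rather than $\{0\}$; that would contradict the inclusion from Theorem~\ref{convex-general-opt} anyway, but stating nonemptiness keeps the equality clean.
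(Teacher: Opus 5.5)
Your proof is correct and follows essentially the same route as the paper (Lemma~\ref{extremal-singleton} combined with Corollary~\ref{cor-single-x-in-subdiff}); the only difference is that you obtain nonemptiness of $\partial\sigma_{K_r}(z)$ from continuity of the finite convex function $\sigma_{K_r}$, whereas the paper uses weak compactness of $K_r$ to show that the supremum is attained. One small caveat concerns your aside on uniqueness of primal minimisers: it invokes ``the unique dual minimiser'', which is only guaranteed for $\e>0$. For $\e=0$, dual minimisers need not be unique, but the conclusion still holds, because every primal minimiser satisfies the saddle-point conditions together with any fixed dual minimiser.
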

Of course, assumption~\eqref{extremal} is a much stronger hypothesis than what is actually needed: we only need the inclusion underlying~\eqref{extremal} to hold at a minimiser $y^\star$ of $J_\e$. 

\begin{rmrk}
    A typical example where \eqref{extremal} holds is given in the previous section, with $P_r$ a convex and closed cone. As noted in Remark \ref{rq-Kr-is-the-ball}, the corresponding generator set $K_r$ is the intersection of $P_r$ with the unit ball. In this case, we have $\sing(K_r)\subset P_r^\circ$. Indeed, using Lemmas \ref{intersect-ball} and \ref{lem-charac-normal-cone}, for any $v\in X$, if $v_+\neq 0$, then $\sigma_{K_r}(v)=\|v_+\|_X$ and $\partial \sigma_{K_r}(v)=v_+/\|v_+\|_X$. Hence, $v\in \sing(K_r)$ implies $v_+=0$, \ie $v\in P_r^\circ.$
    
As a result,
    \[\sing(K_r)^c \cup P_r^\circ=X,\]
and \eqref{extremal} holds.

    This illustrates the importance of the choice of a generator for the convex cone, when this choice is available, and a generic example thereof. 
\end{rmrk}
\subsubsection{General cone}
We now consider the case where $P$ might also not be convex: we let $P$ be any cone, and let $K$ be a bounded set containing $0$ that generates $P$, i.e., such that $\cone(K) = P$. We \textbf{relax the constraints} by defining $K_r$ to be the closed convex hull of $K$, namely $K_r:= \adhco(K)$ and $P_r = \cone(K_r)$, a convex cone that is not necessarily closed. The diagram below summarises the overall relaxation approach.

\begin{equation*} 
         \begin{tikzcd}[column sep=huge, row sep = huge]
P  \arrow{r}{\subset} & P_r  \\%
K \arrow{r}{\adhco} \arrow{u}{\cone}& K_r  \arrow{u}{\cone}
\end{tikzcd}
\label{diagram-relax}
\end{equation*}

We now apply the previous approach to the cone $P_r$ and corresponding generating set $K_r$.
The key question is now whether the constructive characterisations \eqref{opt-cond-approx-conv-gen}--\eqref{opt-cond-exact-conv-gen} lead to elements in the original constraint cone~$P$, and not merely in $P_r$. 
This motivates the following assumption.
\begin{equation}
    \label{extremality}
    \tag{E}
    \ext(K_r) \subset K,
\end{equation}
where $\ext(K_r)$ refers to extreme vectors of $K_r$, i.e., that vectors of $K_r$ that cannot be written as a nontrivial convex combination of vectors in $K_r$.

Assumption~\eqref{extremality} will often be satisfied in practice. Indeed, since $K$ is assumed to be bounded, so is the closed convex set $K_r$; hence the latter set is weakly compact. As a result, an application of Milman's theorem (see~\cite{Rudin}[Theorem 3.24]) with either the strong or the weak topology shows that~\eqref{extremality} holds true as soon as either one of the following hypotheses holds
\begin{itemize}
    \item $K$ is weakly closed,
    \item $K_r$ is strongly compact and $K$ is strongly closed.
\end{itemize}

\begin{thrm}
\label{general-opt}
There hold
\begin{itemize}
\item[(i)]
If $b \in \overline{A(P_r)}$, then for any $\e>0$,
\begin{itemize}
\item $(\mathcal{P}_\e)$ admits a solution $x_\e^\star \in P_r$ such that $\|A x_\e^\star -b\|_Y\leq \e$, 
\item $(\mathcal{D}_\e)$ admits a unique solution~$y_\e^\star$,
\item under~\eqref{extremal}-\eqref{extremality}, $x^\star_\e$ is the unique solution to $(\mathcal{P}_\e)$, $x_\e^\star \in P$ and \[\{x_\e^\star\} = \sigma_{K_r}(A^\ast y_\e^\star) \, \partial \sigma_{K_r}(A^\ast y_\e^\star).\] 
\end{itemize}

\item[(ii)] If $b \in A(P_r)$, 
\begin{itemize}
    \item  $(\mathcal{P}_0)$ admits a solution $x^\star \in P_r$ such that $A x^\star = b$, 
    \item if furthermore $(\mathcal{D}_0)$ admits a solution~$y^\star$, then under~\eqref{extremal}-\eqref{extremality}, $x^\star \in P$ and \[\{x^\star\} = \sigma_{K_r}(A^\ast y^\star) \, \partial \sigma_{K_r}(A^\ast y^\star).\]   
\end{itemize}

\end{itemize}
\end{thrm}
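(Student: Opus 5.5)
Most of Theorem~\ref{general-opt} is Theorem~\ref{convex-general-opt} and Proposition~\ref{prop-opt-cond-ncs} applied to $K_r=\adhco(K)$ and $P_r=\cone(K_r)$. Since $K$ is bounded and contains $0$, the set $K_r$ is a bounded closed convex set containing $0$, so the hypotheses of Theorem~\ref{general-ncs} hold. Hence existence of $x^\star_\e$ (resp.\ $x^\star$) in $P_r$ and uniqueness of $y^\star_\e$ are taken directly from Theorem~\ref{convex-general-opt}. Under \eqref{extremal}, Proposition~\ref{prop-opt-cond-ncs} gives the singleton identity $\{x^\star_\e\}=\sigma_{K_r}(A^\ast y^\star_\e)\,\partial\sigma_{K_r}(A^\ast y^\star_\e)$. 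Uniqueness of the primal solution then follows: every minimiser of $(\mathcal P_\e)$ satisfies \eqref{opt-cond-approx-conv-gen} with the same unique $y^\star_\e$, and the right-hand side is a singleton. The case $\e=0$ goes the same way once a dual solution $y^\star$ is assumed.

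What remains, and what uses \eqref{extremality}, is that $x^\star_\e\in P$ and not merely in $P_r$. Set $z=A^\ast y^\star_\e$. I would split into two cases.

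If $\sigma_{K_r}(z)=0$, the formula gives $x^\star_\e=0$. Since $0\in K$, we get $x^\star_\e\in P$. (When $z\in P_r^\circ$ and $\partial\sigma_{K_r}(z)$ is not a singleton, the product is still $\{0\}$ because $\sigma_{K_r}(z)=0$.)

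If $\sigma_{K_r}(z)>0$, then by \eqref{extremal} $z\notin\sing(K_r)$, so $\partial\sigma_{K_r}(z)=\{u\}$. Recall that $\partial\sigma_{K_r}(z)=\argmax_{v\in K_r}\langle z,v\rangle$. This argmax is a closed face $F$ of $K_r$, and it is nonempty by weak compactness of $K_r$. Any face of a weakly compact convex set contains an extreme point of the set, by Krein--Milman applied to $F$ itself, which is weakly compact; extreme points of a face are extreme points of $K_r$. Since $F=\{u\}$, this forces $u\in\ext(K_r)\subset K$ by \eqref{extremality}. Therefore $x^\star_\e=\sigma_{K_r}(z)\,u$ is a positive multiple of an element of $K$, so it lies in $\cone(K)=P$.

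I expect the main obstacle to be this face and extreme-point argument: showing carefully that a singleton exposed face $\{u\}$ is an extreme point of $K_r$. This can in fact be done directly. If $u=\tfrac12(v+w)$ with $v,w\in K_r$, then $\langle z,v\rangle,\langle z,w\rangle\le\sigma_{K_r}(z)$ and their average equals $\sigma_{K_r}(z)$. So both are maximisers, hence equal to $u$, which makes $u$ extreme without needing Krein--Milman. The remaining technical points are that the argmax is attained (weak compactness) and that $\partial\sigma_{K_r}(z)$ coincides with it; these are standard and presumably among the earlier lemmas. The $\e=0$ case is identical with $y^\star$ in place of $y^\star_\e$.
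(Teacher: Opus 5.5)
Your proposal is correct and follows the paper's own route. You reduce to Theorem~\ref{convex-general-opt}, use \eqref{extremal} to make $\sigma_{K_r}(A^\ast y)\,\partial\sigma_{K_r}(A^\ast y)$ a singleton that every primal minimiser must equal (Lemma~\ref{extremal-singleton} and Corollary~\ref{cor-single-x-in-subdiff}), and then split on whether $\sigma_{K_r}(A^\ast y)$ vanishes, using \eqref{extremality} to place the maximiser in $K$, exactly as in Lemma~\ref{extremal-extremality}. The one small difference is in the extreme-point step: the paper cites the principle that the maximisers of a convex function over a convex set include an extreme point, which in infinite dimension tacitly needs the weak compactness of $K_r$, whereas your argument that a unique maximiser of a linear functional is automatically extreme is more elementary and self-contained.
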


Notice that in the above theorem, $(i)$ implies that, as soon as $b\in \overline{A(P_r)}$, then $b \in \overline{A(P)}$. Indeed, the existence of solutions to the auxiliary problem \eqref{primal-general} and its dual \eqref{dual-general} holds independently of $b\in Y$. We have thus obtained the following corollary: 
\begin{crllr}
Under \eqref{extremal}-\eqref{extremality}, there holds  $\overline{A(P_r)}= \overline{A(P)}$.
\end{crllr}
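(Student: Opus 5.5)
The inclusion $\overline{A(P)} \subset \overline{A(P_r)}$ is immediate: $K \subset K_r$ gives $P = \cone(K) \subset \cone(K_r) = P_r$, hence $A(P) \subset A(P_r)$, and we take closures. All the work is in the reverse inclusion $\overline{A(P_r)} \subset \overline{A(P)}$.

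Fix $b \in \overline{A(P_r)}$ and $\e>0$. We apply Theorem~\ref{general-opt}(i). Under \eqref{extremal}--\eqref{extremality}, the unique solution $x_\e^\star$ of $(\mathcal{P}_\e)$ lies in $P$ and satisfies $\|Ax_\e^\star - b\|_Y \leq \e$. Hence $\mathrm{dist}(b, A(P)) \leq \e$ for every $\e>0$, so $b \in \overline{A(P)}$.

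The substantive point, already contained in Theorem~\ref{general-opt}, is why $x_\e^\star \in P$, so it is worth making explicit. If $A^\ast y_\e^\star \in P_r^\circ$, then $\sigma_{K_r}(A^\ast y_\e^\star)=0$, and the formula gives $x_\e^\star = 0 \in P$. Otherwise \eqref{extremal} forces $A^\ast y_\e^\star \notin \sing(K_r)$, so $\partial\sigma_{K_r}(A^\ast y_\e^\star)$ is a singleton. This unique subgradient is the unique maximiser of $\langle A^\ast y_\e^\star, \cdot\rangle$ over $K_r$. The set of maximisers is a nonempty weakly compact face of $K_r$, and a singleton face is an extreme point. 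By \eqref{extremality} it therefore lies in $K$, and multiplying by the nonnegative scalar $\sigma_{K_r}(A^\ast y_\e^\star)$ keeps it in $\cone(K)=P$.

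The main obstacle is a quantifier issue. Theorem~\ref{general-opt}(i) is stated for $b \in \overline{A(P_r)}$, so we must make sure $(\mathcal{P}_\e)$ is feasible for the $b$ at hand; that feasibility is exactly what the hypothesis $b \in \overline{A(P_r)}$ supplies. No extra argument is then needed beyond invoking the theorem for each $\e>0$.
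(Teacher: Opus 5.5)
Your proof is correct and follows the paper's argument: $\overline{A(P)}\subset\overline{A(P_r)}$ holds trivially because $P\subset P_r$, and for the converse one applies Theorem~\ref{general-opt}~(i) for each $\e>0$ to obtain $x_\e^\star\in P$ with $\|Ax_\e^\star-b\|_Y\le\e$. Your unpacking of why $x_\e^\star\in P$ reproduces Lemma~\ref{extremal-extremality}, and your observation that a singleton set of maximisers is an exposed face, hence an extreme point, is a slightly more direct justification than the paper's appeal to the set of maximisers containing an extreme point.
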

\begin{rmrk}
    This type of statement is common in optimal control theory, in the form of so-called \textit{bang-bang principles}: in many optimal control problems, the set of reachable states using only controls which saturate the constraints (bang-bang controls), is equal to the set of reachable states using all admissible controls~\cite{HermesLasalle1969}. Controls which saturate the constraints are typically extreme points of the control constraint set.
\end{rmrk}

\subsection{Remarks and perspectives}
\paragraph{Uniqueness.} In all our results regarding the approximate solvability problem, the minimiser of the dual functional is unique. In contrast, that is no longer the case for the exact solvability problem, even in simple cases.

Consider $X = Y = \R^2$, $P_r = \R_+^2$, $K_r = P_r \cap \overline{B(0,1)}$, $A = \mathrm{Id}$, $b = (1,0)$. Clearly, $b \in A(P_r)$.
The dual functional $J_0$ then writes
\[J_0(y) = J_0(y_1,y_2)=\tfrac{1}{2} \|y_+\|_2^2 - y_1.  \]
One easily checks that $J$ reaches its infimum value $-\tfrac{1}{2}$ on the whole half-line $\{(1,a), \ a\leq 0\}$. Note however that the constrained linear equation $Ax=b$ with $x\in P$ has a unique solution.

\paragraph{Generalisations.}
The Hilbert space setting we have chosen is sufficiently general to be suitable for applications requiring infinite-dimensional spaces (e.g., linear control theory), yet sufficiently simple to be tractable. 

We are however fairly confident that the core of our results can be generalised to reflexive Banach spaces, or at least to uniformly convex Banach spaces, upon using proper generalisations of the relevant tools (notably, Moreau's decomposition in Banach spaces~\cite{combettes2013moreau}).

\paragraph{Constructiveness.} In the second parts of Theorems \ref{intersect-ball-opt}, \ref{convex-general-opt}, \ref{general-opt}, the dual problem for the exact solvability problem may or may not admit a minimiser. This issue, which plays a central role in the constructiveness of our method, is discussed in further detail in Section \ref{sec-constructiveness}, where we introduce the notion of \textit{weak constructiveness}. We will focus on the existence of an identity characterising the solution of the primal problem (when it exists), notwithstanding hypotheses of the type \eqref{extremal}--\eqref{extremality} guaranteeing that this identity characterises $x^\star$ uniquely. We obtain some partial results and counterexamples which raise important questions on the general issue of constructive characterisations for solutions to constrained equations.

\paragraph{Intrinsic condition for the exact solvability problem.} In the case of a closed cone, the results we provide in Theorems \ref{intersect-ball-ncs} and \ref{intersect-ball-opt} are intrinsic to the cone. This is also the case for the necessary and sufficient condition we provide in Theorem \ref{general-ncs} for the approximate solvability problem.

However, the rest of our results seem to depend strongly on the generator $K_r$ of our cone. It would be interesting to obtain an intrinsic version of these results, \textit{e.g.} an analog to the identity $x^\star=(A^\ast y^\star)_+$,
which would hold for a convex (and not necessarily closed) cone.

\paragraph{Class of cones under consideration.}
In the general case, we consider convex cones that are generated by a bounded closed convex set containing $0$. It would be relevant to describe this class of cones in a less indirect way. Going further, and considering a somewhat dual question, can we describe the set of bounded closed convex generators for such a cone? 

This is related to the previous paragraph concerning a possible intrinsic characterisation of exact solvability. Indeed, the necessary and sufficient condition for exact solvability, and the constructive characterisation of the solution $x^\star$, seem to depend on the generator $K$ of the cone. Having a more detailed description of the class of possible generators, and more knowledge on the properties of the cone $P$, could allow us to consider either an ``optimal'' generator to write these results, or a ``generic'' one. For example, in the case where the cone is closed, the generic generator seems to be the intersection of the cone with the closed unit ball.

\paragraph{Numerical approximations of solutions.} 
Because of its constructive nature and its reliance upon the Fenchel-Rockafellar Theorem, our approach lends itself to using algorithms that find saddle-points of the corresponding Lagrangian, such as the Chambolle-Pock algorithm~\cite{Chambolle_2011}. 

For applications to control theory or inverse problems associated to ODEs and/or PDEs, at least one of the Hilbert spaces is infinite-dimensional. This poses the question of how to appropriately discretise the involved functionals, especially in view of preserving the primal-dual structure at the discrete level.

\section{Mathematical framework}
\subsection{Notations}
\label{subsec-notations}
For the readers' convenience, let us define the convex analytic tools used in the present work, in some generic (real) Hilbert space $H$.
\subsubsection{Functions}
Let $f : H \to \R \cup \{+\infty\}$.

We let $\dom(f) := \{x \in H, \, f(x) < +\infty\}$ be its \textbf{domain}; $f$ is said to be \textbf{proper} when $\dom(f) \neq \emptyset$. We define its \textbf{convex conjugate} by 
\[\forall x \in H, \quad f^\ast(x) := \sup_{y \in H}\; (\langle x,y\rangle - f(y)).\]

We let $\Gamma_0(H)$ be the set of all proper, convex and lower semicontinuous functions on $H$ and recall Fenchel-Moreau's identity: if $f \in \Gamma_0(H)$ then $f^\ast \in \Gamma_0(H)$ and $(f^\ast)^\ast = f$.

Finally, for a given $f\in \Gamma_0(H)$, and for $x \in H$, we define the \textbf{(convex) subdifferential} of $f$ at $x$ through 
\[\partial f(x) := \{p \in H, \; \forall y \in H, \quad  f(y) \geq f(x) + \langle p,y-x\rangle\}.\]

\subsubsection{Convex sets}
Let $C \subset H$ be a convex set.

We let $\delta_C$ be its \textbf{indicator} function, namely $\delta_C(x) = 0$ if $x\in C$ and $+\infty$ otherwise, and $\sigma_C = \delta_C^\ast$ be its \textbf{support} function; in other words, $\sigma_C(x) = \sup_{y \in C} \langle x, y \rangle$. 
We also define its \textbf{gauge} function $j_C$, defined for $x \in H$ by $j_C(x) = \inf \{\alpha > 0, \, x \in \alpha C\}$. 

The set $\ext(C)$ of \textbf{extreme} vectors of $C$  refers to vectors $x \in C$ that cannot be written as a non-trival convex combination of two vectors in $C$.

If $C$ is closed and bounded, $\sigma_C(x) <+\infty$ for all $x \in C$ and the supremum defining $\sigma_C(x)$ is a maximum. In this case, recall that $\sing(C)$ denotes the set of \textbf{singular vectors} to $C$, namely those vectors $x \in H$ for which the set of maximisers associated to the maximum defining $\sigma_C(x)$ is not reduced to a singleton. 

Finally, for $x \in H$, we let $N_C(x) = \{p \in H, \; \forall y  \in C, \, \langle p,x \rangle \geq \langle p,y \rangle\}$ be the \textbf{normal cone} to $C$ at $x$.

For an arbitrary set $K \subset H$, we let $\adhco(K)$ be its \textbf{closed convex hull},  namely the smallest closed and convex set containing $K$. 

\subsubsection{Cones}

Let $P \subset H$ be a cone.

We let $P^\circ = \{x \in H, \; \forall y \in P, \, \langle x, y\rangle \leq 0\} $ be the \textbf{polar cone} of $P$. If $P$ is closed and convex, given $x \in H$ we let $x_+$ be its projection onto $P$ and $x_-$ be its projection onto $P^\circ$. Moreau's decomposition theorem yields $x = x_+ + x_-$ with $\scl{x_+}{x_-} = 0$. 

Finally, we highlight two useful identities (see \cite[Proposition 6.26]{bauschke-combettes} and \cite[Proposition 6.34]{bauschke-combettes} respectively): for two nonempty convex cones $P_1$ and $P_2$, there holds
    \begin{equation}
\label{identities-cones}
        P_1^\circ \cap P_2^\circ =(P_1+P_2)^\circ, \qquad (\overline{P_1}\cap \overline{P_2})^\circ=\overline{P_1^\circ+P_2^\circ}.
    \end{equation}

\subsection{Fenchel-Rockafellar duality and constructive characterisation of solutions}
Let $P_r = \cone(K_r)$ be a cone, where $K_r$ is a closed bounded convex set containing $0$. Hence $P_r$ is a convex cone containing $0$, and is not necessarily closed. It is straightforward but essential to note the identity 
\begin{equation}
    \label{dom-gauge}
    \dom(j_{K_r}) = P_r.
\end{equation}

The purpose of this section is to define the (primal) optimisation problem underlying our results, derive its dual problem, establish strong duality and make the corresponding saddle-point conditions explicit.  
\subsubsection{Primal and dual optimisation problems}

For any $\e \geq 0$, we consider the \textit{primal optimisation problem}, 
\begin{align}
\label{primal-fct}
    \pi_\e = \inf_{x \in X}  \tfrac{1}{2} j_{K_r}^2(x) + \delta_{\{x \in X,\; \|A x -b\|_Y \leq \e\}}(x)
     = \inf_{x \in X} \tfrac{1}{2} j_{K_r}^2(x) + \delta_{\overline{B(b,\e)}}(Ax).
\end{align}
For a given $\e \geq 0$ and in view of~\eqref{dom-gauge}, note that the functional underlying the primal  problem is not identically equal to $+\infty$ if and only if there exists $x \in X$ such that $x \in P_r$ and $\|A x -b\|_Y \leq \e$. In other words, $\pi_\e<+\infty$ if and only if there exists $x \in X$ such that $x \in P_r$ and $\|A x -b\|_Y \leq \e$. Consequently, the case $\e=0$ amounts to exact solvability. 
\begin{prpstn}
\label{strong-duality}
For any $\e \geq 0$, the following strong duality holds 
\begin{equation}\label{eq-strong-duality}
\pi_\e = \inf_{x \in X} \tfrac{1}{2} j_{K_r}^2(x) + \delta_{\overline{B(b,\e)}}(Ax) = - \inf_{y \in Y} J_\e(y),\end{equation}
where the dual functional $J_\e$ is given by
\[\forall y \in Y, \quad J_\e(y) := \tfrac{1}{2}\sigma_{K_r}^2(A^\ast y)  - \langle b, y\rangle_Y + \e \|y\|_Y. \]
Furthermore, if $\pi_\e$ is finite, it is attained. 
\end{prpstn}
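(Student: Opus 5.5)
We apply the Fenchel--Rockafellar theorem to $f(x) = \tfrac12 j_{K_r}^2(x)$ on $X$ and $g(z) = \delta_{\overline{B(b,\e)}}(z)$ on $Y$, so that $\pi_\e = \inf_x f(x) + g(Ax)$. The dual problem is $\sup_{y}\,\big(-f^\ast(A^\ast y) - g^\ast(-y)\big)$, and we compute both conjugates explicitly. For $g$, $g^\ast(w) = \sup_{\|z-b\|\le\e}\langle w,z\rangle = \langle w,b\rangle + \e\|w\|_Y$, hence $g^\ast(-y) = -\langle b,y\rangle_Y + \e\|y\|_Y$. For $f$, we use the standard fact that the conjugate of $\tfrac12 j_{C}^2$ is $\tfrac12 \sigma_C^2$ (polar/gauge duality) for a closed convex $C$ containing $0$. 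We will check this directly:
\[
f^\ast(z)=\sup_{t\ge 0}\,\sup_{x\in tK_r}\big(\langle z,x\rangle - \tfrac12 t^2\big)=\sup_{t\ge0}\big(t\,\sigma_{K_r}(z)-\tfrac12 t^2\big)=\tfrac12\big(\sigma_{K_r}(z)_+\big)^2 .
\]
Here we use that $x\in tK_r$ exactly when $j_{K_r}(x)\le t$, which holds because $K_r$ is closed, convex and contains $0$. Since $0\in K_r$, we have $\sigma_{K_r}\ge 0$, so the positive part disappears. This yields the dual functional $J_\e$ with the sign convention $\pi_\e = -\inf J_\e$.

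We also need $f\in\Gamma_0(X)$. Convexity follows because $j_{K_r}$ is a nonnegative convex (sublinear) function and $t\mapsto t^2/2$ is convex and nondecreasing on $\R_+$. Lower semicontinuity holds because the sublevel sets $\{j_{K_r}\le t\}=tK_r$ are closed. Properness is clear since $f(0)=0$. Likewise $g\in\Gamma_0(Y)$.

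For strong duality we use a qualification condition on the dual side rather than the primal side, since $\dom f = P_r$ may have empty interior and need not be closed. The dual functional $J_\e$ is finite and continuous on all of $Y$: indeed $\sigma_{K_r}$ is Lipschitz with constant $\sup_{K_r}\|\cdot\|<\infty$ because $K_r$ is bounded. We therefore apply the Fenchel--Rockafellar theorem to the dual problem $\inf_y \varphi(y)+\psi(A^\ast y)$, where $\varphi(y)= g^\ast(-y)$ and $\psi = f^\ast$. Since $\psi$ is continuous everywhere on $X$, in particular at $A^\ast y$ for any $y\in\dom\varphi = Y$, the qualification condition holds. The theorem then gives $\inf_y J_\e(y) = -\min_x\big(\varphi^\ast(-Ax)+\psi^\ast(x)\big)$, with the minimum attained whenever the value is finite. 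Computing $\psi^\ast = f^{\ast\ast}=f$ and $\varphi^\ast(-Ax) = g(Ax)$ by Fenchel--Moreau, we recover exactly $\pi_\e$. This gives both the equality $\pi_\e = -\inf J_\e$ and attainment of $\pi_\e$ when it is finite. The case $\pi_\e=+\infty$ is covered as well, since the theorem then forces $\inf J_\e=-\infty$.

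The main obstacle is keeping the signs and the roles of $A$ versus $A^\ast$ straight when applying Fenchel--Rockafellar from the dual side. We also need to verify carefully the conjugate identity $(\tfrac12 j_{K_r}^2)^\ast=\tfrac12\sigma_{K_r}^2$, in particular the closedness of the sublevel sets of $j_{K_r}$ and the convention at $x\notin P_r$. Should attainment via the dual-side theorem be delicate, the fallback is a direct argument. Take a minimising sequence for the primal problem. It is bounded, because $j_{K_r}(x)\le t$ implies $x\in tK_r$ and $K_r$ is bounded. Extract a weakly convergent subsequence. The limit is admissible and optimal, since $f$ and $g\circ A$ are convex and lower semicontinuous, hence weakly lower semicontinuous.
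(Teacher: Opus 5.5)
Your proposal is correct and matches the paper's proof: the same splitting $F=\tfrac12 j_{K_r}^2$, $G=\delta_{\overline{B(b,\e)}}$, the same conjugates, and the same dual-side qualification condition ($G^\ast$ finite everywhere and $\tfrac12\sigma_{K_r}^2$ continuous everywhere) give both $\pi_\e=-\inf J_\e$ and attainment of $\pi_\e$ when it is finite. The only minor difference is how the conjugate is obtained: you compute $(\tfrac12 j_{K_r}^2)^\ast=\tfrac12\sigma_{K_r}^2$ directly via the sublevel sets $tK_r$, whereas the paper's appendix computes $(\tfrac12\sigma_{K_r}^2)^\ast$ through a composition formula and then applies Fenchel--Moreau.
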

Since $K_r$ is bounded, $\sigma_{K_r}$ is Lipschitz continuous. The function $J_\e$ is thus continuous and, in particular, takes finite values on the whole of $Y$.
\begin{proof}
The primal functional can be written in the form $F(x) +G(Ax)$, where $F = \tfrac{1}{2} j_{K_r}^2$, $G = \delta_{\overline{B(b,\e)}}$. Clearly, $G \in \Gamma_0(Y)$, while by Lemma~\ref{conj-supp-square}~(ii), $F \in \Gamma_0(X)$.
Its dual functional, defined on $Y$, is then given by
\[y \mapsto F^\ast(A^\ast y) + G^\ast(-y) \]
It is standard that $G^\ast = \delta_{\overline{B(b,\e)}}^\ast = \langle b, \cdot\rangle_Y + \e \|\cdot\|_Y$, and Lemma~\ref{conj-supp-square}~(ii) shows that $F^\ast = (\tfrac{1}{2} j_{K_r}^2)^\ast = \tfrac{1}{2} \sigma_{K_r}^2$.

We now apply the Fenchel-Rockafellar Theorem~\cite{Rockafellar1967}: if there exists $y \in Y$ such that $G^\ast(-y) < +\infty$ and $F^\ast$ is continuous at $A^\ast y$, then strong duality holds and the infimum of the primal optimisation problem is attained if finite.
Here, $G^\ast$ takes finite values on $Y$, while $F^\ast$ is continuous on $X$ as already mentioned. Hence any $y \in Y$ can be used to apply the Fenchel-Rockafellar theorem, and the proof is finished.  
\end{proof}

\subsubsection{Optimality conditions}
\label{subsubsec-opt-cond}
What follows is standard folklore related to (strong) duality associated to the Fenchel-Rockafellar Theorem~\cite{Rockafellar1967}.

A couple $(x^\star, y^\star) \in X \times Y$ is a primal-dual optimal pair (i.e., $x^\star$ minimises the primal functional, and $y^\star$ minimises the dual functional) if and only if $(x^\star, -y^\star)$ is a saddle-point of the Lagrangian 
\[(x,z) \in X \times Y \mapsto \langle z, A x\rangle_Y + F(x) - G^\ast(z) = \langle z, A x\rangle_Y + \tfrac{1}{2} j_{K_r}^2(x) -   \langle b, z\rangle_Y - \e \|z\|_Y. \]
This is also equivalent to the two following first order optimality conditions
\begin{equation}
\label{saddle-point}
\begin{cases}
    x^\star \in \partial F^\ast(A^\ast y^\star)  = \partial (\tfrac{1}{2}\sigma_{K_r}^2)(A^\ast y^\star) = \sigma_{K_r}(A^\ast y^\star)\, \partial \, \sigma_{K_r}(A^\ast y^\star) \\
     y^\star \in -\partial G(A x^\star)     
    \end{cases},
\end{equation}
where the identity used above in the first line is due to the chain rule for subdifferentials, see for instance~\cite{clarke1990optimization}[Theorem 2.3.9, (ii)].

By the results and the remarks of the previous subsection, given that $\pi_\e = -\inf_{y \in Y} J_\e(y)$ for all $\e \geq 0$, we have
\begin{equation}\label{dual-characterisations}
\begin{aligned}
\forall \e > 0, \; \inf_{y \in Y} J_\e(y) > -\infty & \quad  \implies \quad  b\in \overline{A( P_r)}, \\
  \inf_{y \in Y} J_0(y) > -\infty & \quad \implies  \quad b\in A( P_r). 
\end{aligned}
\end{equation}

Combining \eqref{dual-characterisations} with the optimality conditions \eqref{saddle-point}, we immediately obtain:
\begin{lmm}\label{opt-cond-rmrk}
    Let $\e \geq 0$. If $J_\e$ admits a minimiser $y_\e^\star$, then there exists 
\begin{equation*} x_\e^\star \in \sigma_{K_r}(A^\ast y_\e^\star) \, \partial \sigma_{K_r}(A^\ast y_\e^\star),\end{equation*}
which is a minimiser for \eqref{primal-general}. In particular, $x_\e^\star \in P_r$ and $\|A x_\e^\star-b\|_Y \leq \e$.
\end{lmm}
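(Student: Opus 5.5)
The plan is to derive Lemma~\ref{opt-cond-rmrk} from Proposition~\ref{strong-duality} together with the saddle-point characterisation \eqref{saddle-point}. The guiding idea is that a dual minimiser forces the primal value to be finite, and finiteness in turn gives a primal minimiser that pairs with the given dual one. Throughout, write $F=\tfrac12 j_{K_r}^2$ and $G=\delta_{\overline{B(b,\e)}}$, as in the proof of Proposition~\ref{strong-duality}.

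\textbf{Step 1 (finiteness of the primal value).} Let $y_\e^\star$ minimise $J_\e$. Since $J_\e$ is finite everywhere, $\inf_Y J_\e = J_\e(y_\e^\star) > -\infty$. By \eqref{eq-strong-duality}, $\pi_\e = -J_\e(y_\e^\star)$ is then finite.

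\textbf{Step 2 (existence of a primal minimiser).} By the last part of Proposition~\ref{strong-duality}, a finite $\pi_\e$ is attained, so there is some $\bar x$ with $F(\bar x)+G(A\bar x)=\pi_\e<+\infty$. By \eqref{dom-gauge}, this already gives $\bar x\in P_r$ and $\|A\bar x-b\|_Y\le\e$.

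\textbf{Step 3 (optimality conditions).} The pair $(\bar x, y_\e^\star)$ is primal-dual optimal with zero duality gap, i.e.
\[F(\bar x)+G(A\bar x)+F^\ast(A^\ast y_\e^\star)+G^\ast(-y_\e^\star)=0.\]
To extract the subdifferential inclusions, I would rewrite the left-hand side as the sum of two Fenchel--Young gaps:
\[\bigl[F(\bar x)+F^\ast(A^\ast y_\e^\star)-\langle A^\ast y_\e^\star,\bar x\rangle_X\bigr]+\bigl[G(A\bar x)+G^\ast(-y_\e^\star)-\langle -y_\e^\star, A\bar x\rangle_Y\bigr].\]
Each bracket is nonnegative by the Fenchel--Young inequality, and the two bracketed expressions sum to zero, so each must vanish. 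The equality case of Fenchel--Young gives $\bar x\in\partial F^\ast(A^\ast y_\e^\star)$ (using $F\in\Gamma_0(X)$, so that $F=F^{\ast\ast}$) and $-y_\e^\star\in\partial G(A\bar x)$. These are exactly the conditions \eqref{saddle-point}.

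\textbf{Step 4 (identifying the subdifferential).} Finally, the chain rule already quoted in \eqref{saddle-point} gives
\[\partial(\tfrac12\sigma_{K_r}^2)(z)=\sigma_{K_r}(z)\,\partial\sigma_{K_r}(z),\]
valid because $\sigma_{K_r}\ge 0$ since $0\in K_r$. Hence $x_\e^\star:=\bar x$ satisfies the stated inclusion, is a minimiser of \eqref{primal-general}, and lies in $P_r$ with $\|Ax_\e^\star-b\|_Y\le\e$ by Step 2.

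\textbf{Main obstacle.} The only delicate point is the logical order. One cannot simply take an arbitrary element of $\sigma_{K_r}(A^\ast y_\e^\star)\,\partial\sigma_{K_r}(A^\ast y_\e^\star)$ and claim it is optimal. Instead one must first secure a primal minimiser via attainment, and only then use the vanishing duality gap to locate it in that set.
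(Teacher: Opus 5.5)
Your proposal is correct and follows the paper's own route. The paper obtains this lemma the same way: strong duality with attainment (Proposition~\ref{strong-duality}, via \eqref{dual-characterisations}) yields a primal minimiser, and the first saddle-point condition in \eqref{saddle-point}, with the chain rule for $\partial(\tfrac12\sigma_{K_r}^2)$, places it in $\sigma_{K_r}(A^\ast y_\e^\star)\,\partial\sigma_{K_r}(A^\ast y_\e^\star)$. Your Fenchel--Young gap argument in Step 3 just spells out the ``standard folklore'' the paper invokes for the equivalence between primal-dual optimality and \eqref{saddle-point}.
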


\begin{crllr}\label{cor-single-x-in-subdiff}
    If $(x^\star, y^\star)$ is a primal-dual optimal pair such that $\sigma_{K_r}(A^\ast y^\star) \, \partial \sigma_{K_r}(A^\ast y^\star)$ is reduced to a singleton, then this singleton equals $x^\star$. 
\end{crllr}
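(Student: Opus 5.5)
The argument is short and uses only the saddle-point characterisation \eqref{saddle-point}. A couple $(x^\star,y^\star)$ is a primal-dual optimal pair if and only if both first-order conditions in \eqref{saddle-point} hold. In particular, the first condition gives
\[
x^\star \in \partial\big(\tfrac12\sigma_{K_r}^2\big)(A^\ast y^\star) = \sigma_{K_r}(A^\ast y^\star)\,\partial\sigma_{K_r}(A^\ast y^\star).
\]
The chain-rule identity used here is the one recalled after \eqref{saddle-point}. It applies because $\sigma_{K_r}$ is finite, convex, Lipschitz and nonnegative (since $0\in K_r$), and $t\mapsto \tfrac12 t^2$ is nondecreasing on $\R_+$.

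So $x^\star$ is an element of $\sigma_{K_r}(A^\ast y^\star)\,\partial\sigma_{K_r}(A^\ast y^\star)$. By hypothesis this set is a singleton $\{z\}$, and an element of a singleton must equal $z$. Hence the singleton is exactly $\{x^\star\}$, which is the claim.

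The only point needing care is the primal-dual optimality of the pair. This means that $x^\star$ minimises \eqref{primal-general}, $y^\star$ minimises $J_\e$, and strong duality holds, which is Proposition~\ref{strong-duality}. Under these conditions the saddle-point equivalence stated just before \eqref{saddle-point} applies, so \eqref{saddle-point} holds for this pair. I will make this equivalence explicit: from $\tfrac12 j_{K_r}^2(x^\star)+\delta_{\overline{B(b,\e)}}(Ax^\star) = -J_\e(y^\star)$, add the Fenchel–Young inequalities
\[
F(x^\star)+F^\ast(A^\ast y^\star)\ge \langle x^\star, A^\ast y^\star\rangle_X, \qquad G(Ax^\star)+G^\ast(-y^\star)\ge -\langle Ax^\star,y^\star\rangle_Y .
\]
Their sum is $\ge 0$, and strong duality says the sum of the left-hand sides is $0$. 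So both inequalities are equalities, which gives $x^\star\in\partial F^\ast(A^\ast y^\star)$ because $F\in\Gamma_0(X)$ by Lemma~\ref{conj-supp-square}.

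I expect no real obstacle. The only subtle ingredient is the subdifferential chain rule for $\tfrac12\sigma_{K_r}^2$, and the paper already takes it for granted.
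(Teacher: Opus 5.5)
Your proof is correct and matches the paper's reasoning: the paper treats the corollary as an immediate consequence of the first condition in \eqref{saddle-point}, which places $x^\star$ in $\sigma_{K_r}(A^\ast y^\star)\,\partial\sigma_{K_r}(A^\ast y^\star)$, so a singleton must equal $\{x^\star\}$. Your Fenchel--Young derivation of that condition from strong duality (Proposition~\ref{strong-duality}) just makes explicit the standard equivalence that the paper calls folklore.
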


The proofs of our main results will rely greatly on the above observations,and a careful examination of the relationship between the Farkas problems \eqref{approx-Farkas-problem} and \eqref{exact-Farkas-problem}, and the optimisation problems \eqref{primal-general} and \eqref{dual-general} for $\e\geq 0$.

\section{Proofs}
\paragraph{Outline of the proofs.} 
 We first focus on cones generated by a bounded closed convex set containing $0$, \ie Theorems \ref{general-ncs} and \ref{convex-general-opt}.

We tackle approximate solvability, then exact solvability.
Section \ref{subsec-approx-proof} provides:
\begin{itemize}
    \item proof of Theorem \ref{general-ncs} $(i)$ - necessary and sufficient conditions for approximate solvability;
    \item proof of the uniqueness of dual minimisers (Lemma \ref{unique-approx});
    \item proof of Theorem \ref{convex-general-opt} $(i)$ - constructive characterisation and uniqueness of solutions in the approximate case.
\end{itemize}

In Section \ref{subsec-exact-proof}:
\begin{itemize}
    \item we prove Theorem \ref{general-ncs} $(ii)$ - a necessary and sufficient condition for exact solvability;
    \item we deduce Theorem \ref{convex-general-opt} $(ii)$ regarding the characterisation of solutions when they exist.
\end{itemize}

Section \ref{subsec-closed-proof} derives Theorems \ref{intersect-ball-ncs} and \ref{intersect-ball-opt}, which are particular instances of the above theorems, with a closed cone.

Finally, in Section \ref{subsec-constructivity-uniqueness}, we focus on situations where the solution can be completely characterised:
 \begin{itemize}
     \item in Subsection \ref{subsubsec-convex-constructive} we address the case of convex cones, proving Proposition \ref{prop-opt-cond-ncs};
     \item in Subsection \ref{subsubsec-general-constructive} we turn to general cones using relaxation, and prove Theorem \ref{general-opt}.
 \end{itemize}
Within this section, we shall often use the trivial fact that $\sigma_{K_r}$ is $1$-positively homogeneous.

\subsection{Approximate solvability}

\label{subsec-approx-proof}
\subsubsection{Necessary and sufficient condition for solvability}
\label{subsubsec-approx-ncs}
We begin by proving Theorem \ref{general-ncs}, $(i)$. First, we check that:
\begin{lmm}\label{lem-separation-interpretation}
   For $y\in Y$, $\sigma_{K_r}(A^\ast y) = 0$ if and only if $A^\ast y \in P_r^\circ$.
\end{lmm}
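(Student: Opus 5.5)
The claim reduces to a statement about the support function alone. Write $z = A^\ast y \in X$; it suffices to show that for any $z\in X$, $\sigma_{K_r}(z)=0$ if and only if $z\in P_r^\circ$. The argument uses only two facts: $0\in K_r$, and $P_r=\cone(K_r)=\{\lambda k,\ \lambda\ge 0,\ k\in K_r\}$. Here I use the standard description of the cone generated by a convex set containing $0$ as the union of its nonnegative dilates, consistent with $\dom(j_{K_r})=P_r$ in \eqref{dom-gauge}.

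First, since $0\in K_r$, we always have $\sigma_{K_r}(z)\ge \langle z,0\rangle_X=0$. Therefore $\sigma_{K_r}(z)=0$ is equivalent to $\sigma_{K_r}(z)\le 0$, that is, to $\langle z,k\rangle_X\le 0$ for all $k\in K_r$.

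For the direct implication, assume $\langle z,k\rangle_X\le 0$ for all $k\in K_r$. Any $p\in P_r$ can be written $p=\lambda k$ with $\lambda\ge 0$ and $k\in K_r$. Then $\langle z,p\rangle_X=\lambda\langle z,k\rangle_X\le 0$, so $z\in P_r^\circ$.

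For the converse, assume $z\in P_r^\circ$. Since $K_r\subset P_r$ (take $\lambda=1$), we get $\langle z,k\rangle_X\le 0$ for every $k\in K_r$. Hence $\sigma_{K_r}(z)\le 0$, and combined with the first step, $\sigma_{K_r}(z)=0$.

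The proof is elementary. The only point requiring care is the description of $\cone(K_r)$ as nonnegative multiples of elements of $K_r$. If the paper defines $\cone$ as the smallest cone containing $K_r$, the representation $p=\lambda k$ still holds, because $\bigcup_{\lambda\ge0}\lambda K_r$ is itself a cone (and is convex since $K_r$ is convex and contains $0$). No closedness of $P_r$ is needed, since polar cones are insensitive to closure.
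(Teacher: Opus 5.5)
Your proposal is correct and follows the same route as the paper: reduce $\sigma_{K_r}(A^\ast y)=0$ to $\langle A^\ast y, v\rangle_X\le 0$ for all $v\in K_r$, then pass to all of $P_r=\cone(K_r)$ by positive homogeneity. You simply make explicit two points the paper leaves implicit: that $0\in K_r$ forces $\sigma_{K_r}\ge 0$, and that $P_r$ consists of nonnegative multiples of elements of $K_r$.
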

\begin{proof}
        Let us first remark that $\sigma_{K_r}(A^\ast y) = 0$ if and only if $\langle A^\ast y, v \rangle_X \leq 0$ for all $v \in K_r$. Given that $P_r = \cone(K_r)$, it is equivalent to $\langle A^\ast y, v \rangle_X \leq 0$ for all $v \in P_r$, \ie $A^\ast y \in P_r^\circ$. 
\end{proof}
We now have two equivalent formulations of the solvability condition in Theorem \ref{general-ncs}, $(i)$, one that is intrinsic to $P_r$, and one that depends on the choice of generator $K_r$. We now proceed with the proof, using the latter.
\begin{lmm}\label{lem-coercivity-interpretation}
    The following holds:
\[\Big(\forall y \in Y, \; \sigma_{K_r}(A^\ast y) = 0 \, \implies \,\scl{b}{y}_Y \leq 0 \Big)\quad \implies \quad  \forall \e>0,\quad\inf_{y \in Y} {J_\e}(y) = \min_{y \in Y} {J_\e}(y) > -\infty.\]    
\end{lmm}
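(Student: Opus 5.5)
We will show that, for fixed $\e>0$, $J_\e$ is coercive on $Y$. Since $J_\e$ is convex and continuous (as noted after Proposition~\ref{strong-duality}), hence weakly lower semicontinuous, coercivity gives a minimiser by the direct method in the Hilbert space $Y$, and in particular $\inf J_\e=\min J_\e>-\infty$. The coercivity will follow from a compactness argument on the unit sphere combined with positive homogeneity.

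\emph{Step 1: homogeneous decomposition.} For $y=t u$ with $t>0$ and $\|u\|_Y=1$, the $1$-positive homogeneity of $\sigma_{K_r}$ gives
\[
J_\e(tu)=\tfrac{t^2}{2}\sigma_{K_r}^2(A^\ast u)+t\big(\e-\scl{b}{u}_Y\big).
\]
It is therefore enough to show that $\e-\scl{b}{u}_Y$ and $\sigma_{K_r}^2(A^\ast u)$ cannot both be small on the unit sphere. Precisely, we aim for a constant $\eta>0$ such that, for every unit vector $u$, either $\scl{b}{u}_Y\leq \e/2$ or $\sigma_{K_r}(A^\ast u)\geq \eta$.

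\emph{Step 2: obtaining $\eta$ (the main obstacle).} We argue by contradiction. Suppose there are unit vectors $u_n$ with $\scl{b}{u_n}_Y>\e/2$ and $\sigma_{K_r}(A^\ast u_n)\to 0$. Up to a subsequence, $u_n\rightharpoonup u$ weakly. Then $\scl{b}{u}_Y\geq\e/2$, so $u\neq 0$. We now need $\sigma_{K_r}(A^\ast u)\leq 0$. The composition $y\mapsto\sigma_{K_r}(A^\ast y)$ is convex and continuous, hence weakly lower semicontinuous, so $\sigma_{K_r}(A^\ast u)\leq\liminf_n \sigma_{K_r}(A^\ast u_n)=0$. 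Since $0\in K_r$, we also have $\sigma_{K_r}\geq 0$, hence $\sigma_{K_r}(A^\ast u)=0$. The hypothesis then yields $\scl{b}{u}_Y\leq 0$, contradicting $\scl{b}{u}_Y\geq\e/2>0$. The delicate point is that strong compactness fails in infinite dimension; weak compactness works precisely because the relevant constraint $\sigma_{K_r}(A^\ast\cdot)=0$ passes to weak limits through lower semicontinuity, and the linear term $\scl{b}{\cdot}_Y$ is weakly continuous.

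\emph{Step 3: conclusion.} Take a unit vector $u$ and $t>0$. In the first case, $\scl{b}{u}_Y\leq\e/2$ gives $J_\e(tu)\geq t\e/2$. In the second case,
\[
J_\e(tu)\geq \tfrac{\eta^2}{2}t^2-t\big(\|b\|_Y-\e\big)_+ .
\]
In both cases $J_\e(y)\to+\infty$ as $\|y\|_Y\to\infty$, uniformly in the direction $u$. Together with continuity and convexity, this gives existence of a minimiser, and hence $\inf_{y\in Y}J_\e(y)=\min_{y\in Y}J_\e(y)>-\infty$.
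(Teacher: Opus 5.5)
Your proof is correct and follows essentially the same route as the paper: you normalise onto the unit sphere, use positive homogeneity, extract a weakly convergent subsequence, and apply the hypothesis at the weak limit via weak lower semicontinuity of $\sigma_{K_r}\circ A^\ast$. The difference is only one of packaging: you extract a uniform constant $\eta$ that gives explicit lower bounds on $J_\e$, whereas the paper argues by contradiction directly along a sequence $(y_n)$ with $\|y_n\|_Y\to\infty$ and $J_\e(y_n)$ bounded.
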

\begin{proof}
Assume that, for all $y \in Y$, $\sigma_{K_r}(A^\ast y) = 0 \, \implies \,\scl{b}{y}_Y \leq 0$.

Since the functionals $J_{\e}, \e > 0$  are in $\Gamma_0(Y)$, the result will be proved if we establish that $J_{\e}$ is coercive for all $\e>0$. 
    
    We let $\e>0$ be fixed.
    By contradiction, if $J_\e$ is not coercive there exists a sequence $(y_n) \in Y^\N$ such that $\|y_n\|_X\to \infty$ with $(J(y_n))$ upper bounded, say $J(y_n) \leq M$ for all $n \in \N$. Denoting $z_n := \tfrac{y_n}{\|y_n\|_Y}$ and since $\sigma_{K_r}^2$ is $2$-positively homogeneous, we have
    \begin{equation*}
        \tfrac{1}{2} \|y_n\|_Y^2 \,\sigma_{K_r}^2(A^\ast z_n)-\|y_n\|_Y \langle b, z_n\rangle_{Y} + \e\|y_n\|_Y  = J_\e(y_n) \leq M.
    \end{equation*}
    Consequently, if $\liminf_{n\to \infty} \sigma_{K_r}^2(A^\ast z_n)>0$, then the left-hand side above diverges to $+\infty$ which is not possible.
    Let us now treat the remaining case where $\liminf_{n\to \infty} \sigma_{K_r}^2(A^\ast z_n)=0$. We may upon extraction of a subsequence assume both that $\lim_{n\to \infty} \sigma_{K_r}^2(A^\ast z_n)=0$, and owing to the boundedness of $(z_n)$, that $z_n \rightharpoonup z$ weakly in $Y$ for some $z \in Y$. Since $A^\ast  \in L(Y,X)$, we have $A^\ast z_n \rightharpoonup A^\ast z$ weakly in $X$.

    The function $\sigma_{K_r}^2$ is convex and strongly lower semicontinuous on $E$, hence it is (sequentially) weakly lower semicontinuous, which implies that $\sigma_{K_r}^2(A^\ast z) \leq \lim_{n\to \infty} \sigma_{K_r}^2(A^\ast z_n)=0$ and enforces $\sigma_{K_r}^2(A^\ast z)  = 0$. By our assumption, this implies $\langle b, z\rangle_Y \leq 0$. In this case, we have
    \begin{equation*}
        \|y_n\|_Y \, (-\langle b, z_n\rangle_{Y} +\e)   \leq J_\e(y_n) \leq M,
    \end{equation*}
    and at the limit we again reach a contradiction.
\end{proof}

Let us now prove Theorem~\ref{general-ncs} (i), namely the following Proposition.
\begin{prpstn}\label{ncs-approx} We have the equivalence
\[b \in \overline{A( P_r)} \quad \iff \quad \forall y \in Y, \; \sigma_{K_r}(A^\ast y) = 0 \, \implies \,\scl{b}{y}_Y \leq 0. \]\end{prpstn}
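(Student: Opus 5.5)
We prove the two implications separately. The sufficiency direction ($\Leftarrow$) goes through the duality machinery already in place. The necessity direction ($\Rightarrow$) is a direct limiting argument.

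\emph{Sufficiency.} Assume that $\sigma_{K_r}(A^\ast y)=0$ implies $\scl{b}{y}_Y\le 0$ for every $y\in Y$.
\begin{enumerate}
\item By Lemma~\ref{lem-coercivity-interpretation}, for every $\e>0$ the dual infimum $\inf_Y J_\e$ is finite and attained.
\item Strong duality (Proposition~\ref{strong-duality}) then gives $\pi_\e=-\inf_Y J_\e<+\infty$.
\item Finiteness of $\pi_\e$ is equivalent to the existence of some $x\in P_r$ with $\|Ax-b\|_Y\le\e$. This is exactly the first line of \eqref{dual-characterisations}.
\item Since $\e>0$ is arbitrary, $b\in\overline{A(P_r)}$.
\end{enumerate}
Alternatively, Lemma~\ref{opt-cond-rmrk} applied to a minimiser $y_\e^\star$ produces such an $x_\e^\star$ explicitly.

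\emph{Necessity.} Assume $b\in\overline{A(P_r)}$, and take $y\in Y$ with $\sigma_{K_r}(A^\ast y)=0$.
\begin{enumerate}
\item By Lemma~\ref{lem-separation-interpretation}, $A^\ast y\in P_r^\circ$. Hence $\scl{Ax}{y}_Y=\scl{x}{A^\ast y}_X\le 0$ for every $x\in P_r$.
\item Choose $x_n\in P_r$ with $Ax_n\to b$ in $Y$.
\item Passing to the limit in $\scl{Ax_n}{y}_Y\le 0$, by continuity of the inner product, gives $\scl{b}{y}_Y\le 0$.
\end{enumerate}

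Essentially all the difficulty sits in the coercivity statement, Lemma~\ref{lem-coercivity-interpretation}, which is already proved. One point needs care: \eqref{dual-characterisations} is only an implication read off from finiteness of $\pi_\e$. It therefore suffices that the dual infimum is $>-\infty$; attainment of the dual infimum is not needed for this step.
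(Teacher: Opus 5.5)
Your proof is correct and follows the paper's argument. For necessity, you pass to the limit in $\scl{Aw_n}{y}_Y=\scl{w_n}{A^\ast y}_X\le 0$ using Lemma~\ref{lem-separation-interpretation}, which the paper phrases as a contradiction; for sufficiency, you combine the coercivity Lemma~\ref{lem-coercivity-interpretation} with strong duality via \eqref{dual-characterisations}, and you are right that only finiteness of the dual infimum, not its attainment, is needed there.
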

\begin{proof}
    Assume that $b \in \overline{A(P_r)}$: we may find $(w_n) \in P_r^\N$ such that the sequence $(A w_n)$ converges to~$b$. By contradiction, and using Lemma \ref{lem-separation-interpretation}, assume that there exists $y \in Y$ such that $A^\ast y \in P_r^\circ$ and $\scl{b}{y}_Y > 0$. Then we should have \[0 \geq \langle A^\ast y, w_n\rangle_X = \langle y, A w_n \rangle_Y \longrightarrow \langle y,b\rangle_Y > 0, \]
    which is impossible.

    We now tackle the converse assertion, assuming that, for all $y \in Y$, $\sigma_{K_r}(A^\ast y) = 0  \implies \scl{b}{y}_Y \leq 0$.
    By Lemma \ref{lem-coercivity-interpretation}, this implies that $\inf_{y \in Y} J_\e(y) > -\infty$ for all $\e>0$.  Combined with \eqref{dual-characterisations}, this concludes the proof that $b\in \overline{A(P_r)}$.
    


\end{proof}
We can sum up the relationship of the auxiliary optimisation problems \eqref{primal-general}--\eqref{dual-general} to the approximate Farkas problem in this way:
\begin{crllr}\label{cor-equivalence-optim-approx}
    The following propositions are equivalent:
    \begin{itemize}
        \item[(i)] $b\in \overline{A(P_r)}$;
        \item[(ii)] For all $\e>0$, $\inf_{y\in Y} J_\e(y)>-\infty$;
        \item[(iii)] For all $\e>0$, \eqref{primal-general} admits a solution $x^\star_\e.$ 
    \end{itemize}
\end{crllr}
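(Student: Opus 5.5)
The plan is to prove the cycle (i)$\Rightarrow$(ii)$\Rightarrow$(iii)$\Rightarrow$(i), relying almost entirely on Proposition~\ref{ncs-approx}, Lemma~\ref{lem-coercivity-interpretation}, Proposition~\ref{strong-duality} and the observation that $\pi_\e<+\infty$ exactly when some $x\in P_r$ satisfies $\|Ax-b\|_Y\leq\e$.

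For (i)$\Rightarrow$(ii), assume $b\in\overline{A(P_r)}$. The direct implication of Proposition~\ref{ncs-approx} gives the dual condition: for all $y\in Y$, $\sigma_{K_r}(A^\ast y)=0$ implies $\scl{b}{y}_Y\leq 0$. Lemma~\ref{lem-coercivity-interpretation} then shows that, for every $\e>0$, $\inf_Y J_\e$ is finite and attained. This is (ii). Alternatively, one can argue directly: fix $\e>0$ and pick $x\in P_r$ with $\|Ax-b\|_Y\leq\e$. Then $\pi_\e<+\infty$, and strong duality \eqref{eq-strong-duality} gives $\inf J_\e=-\pi_\e>-\infty$. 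I would mention this as the shorter route, since it bypasses the lemma.

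For (ii)$\Rightarrow$(iii), fix $\e>0$. By Proposition~\ref{strong-duality}, $\pi_\e=-\inf_Y J_\e<+\infty$. Also $\pi_\e\geq 0$, because the primal functional is nonnegative, so $\pi_\e$ is finite. The last assertion of Proposition~\ref{strong-duality} (finite values are attained) then yields a minimiser $x^\star_\e$ of \eqref{primal-general}.

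For (iii)$\Rightarrow$(i), observe that a minimiser $x^\star_\e$ has $\mathcal F_\e(x^\star_\e)=\pi_\e$. This value is finite, because the feasible set is nonempty once a minimiser with finite value exists. A small point needs care here: if the feasible set were empty, every $x$ would be a "minimiser" of a functional identically $+\infty$. To avoid this, I will read "admits a solution" as a minimiser with finite value, as is implicit in \eqref{why-aux-prob}. Under that reading, \eqref{why-aux-prob} gives $x^\star_\e\in P_r$ with $\|Ax^\star_\e-b\|_Y\leq\e$. Since $\e>0$ is arbitrary, $b\in\overline{A(P_r)}$.

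No step is a genuine obstacle. The only delicate point is the convention just described for (iii), namely that "solution" means a finite-valued minimiser, which I would state explicitly.
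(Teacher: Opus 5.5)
Your proof is correct and follows the same cycle as the paper: (i)$\Rightarrow$(ii) via Proposition~\ref{ncs-approx} and Lemma~\ref{lem-coercivity-interpretation}, (ii)$\Rightarrow$(iii) via strong duality and attainment, and (iii)$\Rightarrow$(i) via \eqref{why-aux-prob}. Your alternative for (i)$\Rightarrow$(ii) is also valid and shorter (feasibility gives $\pi_\e<+\infty$, hence $\inf J_\e=-\pi_\e>-\infty$). Your convention that a ``solution'' means a finite-valued minimiser makes explicit what the paper leaves implicit in \eqref{why-aux-prob}.
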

\begin{proof}
    The implication $(i) \implies (ii)$ is a consequence of Proposition \ref{ncs-approx} and Lemma \ref{lem-coercivity-interpretation}.

    The implication $(ii)\implies (iii)$ is a direct consequence of the strong duality given in Proposition \ref{strong-duality}: $\pi_\e=-\inf_Y J_\e<+\infty$ and is hence attained at a minimiser $x^\star_\e$.

    Finally, recalling the implication \eqref{why-aux-prob} given in the introduction, $(iii)$ implies that for all $\e>0$ there exists $x^\star_\e \in P_r$ such that $\|Ax^\star_\e-b\|_Y\leq \e$, \ie $b\in \overline{A(P_r)}.$
\end{proof}

\subsubsection{Constructive characterisation and uniqueness}
\label{subsubsec-approx-cons}
We now turn to the proof of Theorem \ref{convex-general-opt} $(i)$. The previous section already provides the existence of a solution $x^\star_\e$ to \eqref{primal-general}, and a minimiser for the dual functional $J_\e$, under the assumption that $b \in \overline{A(P_r)}$.

We now prove that this dual minimiser is always unique:
\begin{prpstn}
\label{unique-approx}
    If $b \in \overline{A(P_r)}$, then $J_\e$ admits a unique minimiser for all $\e>0$.
\end{prpstn}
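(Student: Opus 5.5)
Existence of a minimiser is already given by Lemma~\ref{lem-coercivity-interpretation} combined with Proposition~\ref{ncs-approx}, so only uniqueness needs proof. I will argue by contradiction. Suppose $y_1\neq y_2$ both minimise $J_\e$, and set $m=\min J_\e$. By convexity $J_\e$ is constant equal to $m$ on the segment $[y_1,y_2]$. Each of the three terms $\tfrac12\sigma_{K_r}^2(A^\ast\cdot)$, $-\langle b,\cdot\rangle_Y$ and $\e\|\cdot\|_Y$ is convex, and the middle one is affine. Hence each convex term must be affine along the segment, since otherwise strict inequality at the midpoint would contradict minimality.

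\textbf{Step 1 (the norm term).} Affineness of $t\mapsto\|y_1+t(y_2-y_1)\|_Y$ on $[0,1]$ forces, by strict convexity of the Hilbert norm (equality in the triangle inequality), $y_1$ and $y_2$ to be nonnegatively collinear. We get $y_2=\lambda y_1$ with $\lambda\ge0$, $\lambda\neq1$, or one of them is zero. In every case the segment lies on a ray $\{s z : s\ge 0\}$ with $\|z\|_Y=1$.

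\textbf{Step 2 (restriction to the ray).} Along this ray, $2$-positive homogeneity gives $J_\e(sz)=\tfrac{a}{2}s^2-\beta s$, where $a=\sigma_{K_r}^2(A^\ast z)\ge0$ and $\beta=\langle b,z\rangle_Y-\e$. If $a>0$, this is a strictly convex quadratic in $s$, so it has a unique minimiser on $s\ge0$. That contradicts two distinct minimisers $s_1\neq s_2$ on the ray.

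\textbf{Step 3 (the case $a=0$).} Then $J_\e(sz)=-\beta s$ is affine and takes the same value at $s_1\ne s_2$, so $\beta=0$, i.e.\ $\langle b,z\rangle_Y=\e>0$. But $a=0$ means $\sigma_{K_r}(A^\ast z)=0$. By the characterisation of Proposition~\ref{ncs-approx}, this forces $\langle b,z\rangle_Y\le0$, a contradiction.

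The main obstacle is Step 1: the affineness argument must be carried out cleanly. The sum of convex functions being affine on a segment implies each summand is affine there, because the sum of their midpoint-convexity defects vanishes and each defect is nonnegative. One must then invoke the equality case of the triangle inequality in Hilbert spaces, including the degenerate case where one of the $y_i$ is $0$, which is still covered by the ray description.
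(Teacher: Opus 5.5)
Your proof is correct, and it takes a genuinely different route from the paper's. The paper works through the primal problem. It uses strong duality and the saddle-point relation $y^\star\in-\partial\delta_{\overline{B(b,\e)}}(Ax^\star)$ from \eqref{saddle-point}, and shows that the image $AX^\star$ of the primal solution set is a single point $b^\star$. This uses uniqueness of the primal solution $0$ when $\|b\|_Y\le\e$, and strict convexity of the ball otherwise. It deduces that every dual minimiser lies in the fixed set $-N_{\overline{B(b,\e)}}(b^\star)$: it is $0$ when $\|b\|_Y<\e$, lies on $\{\lambda b,\ \lambda\ge0\}$ when $\|b\|_Y=\e$, and on $\{\lambda(b-b^\star),\ \lambda\ge0\}$ when $\|b\|_Y>\e$. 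Restricting $J_\e$ to that ray then gives a one-variable quadratic with a unique minimiser.

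You instead stay entirely on the dual side. Strict convexity of the term $\e\|\cdot\|_Y$ alone forces any two minimisers onto a common ray through the origin, without identifying which ray. The same one-dimensional quadratic argument then concludes, and you exclude the degenerate case $a=0$ explicitly through the Farkas condition of Proposition~\ref{ncs-approx}. The paper needs exactly this point too but passes over it. In its case $\|b\|_Y=\e$, the claim that $0$ is the unique minimiser of $\gamma_0(\lambda)=a_0\lambda^2$ requires $a_0>0$. That holds because $\sigma_{K_r}(A^\ast b)=0$ would force $\|b\|_Y^2=\langle b,b\rangle_Y\le0$.

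What each route buys: your argument is shorter and needs neither strong duality nor subdifferential calculus. It also avoids the case distinction between $\|b\|_Y$ and $\e$. The paper's argument yields more information in exchange. It shows that $y^\star_\e=0$ if and only if $\|b\|_Y\le\e$, and otherwise gives the explicit formula $y^\star_\e=\lambda^\star(b-Ax^\star_\e)$ with $\lambda^\star=-a_2/(2a_1)>0$. This links the dual minimiser to the primal residual and underlies the Lagrange-multiplier interpretation stated in the introduction.
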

\begin{proof}

  Since $b \in \overline{A(P_r)}$, Lemma~\ref{lem-coercivity-interpretation} shows that for all $\e>0$, $J_\e$ has a finite infimum, which is in fact a minimum. 

    Let $\e>0$ be fixed, and 
denote by $X^\star$ the set of minimisers for the primal problem \eqref{primal-general}, and by $Y^\star$ the set of minimisers for the dual problem \eqref{dual-general}. Let $(x^\star, y^\star) \in X^\star \times Y^\star$ be a primal-dual optimal pair, which exists by Corollary \ref{cor-equivalence-optim-approx}.

If $\|b\|_Y \leq \e$, $x^\star = 0$ is an optimal primal variable since $\|A 0 -b\|_Y = \|b\|_Y \leq \e$. But $j_{K_r}(x) = 0$ if and only if $x=0$ by Lemma~\ref{basic-gauge}(i), hence $0$ is the unique optimal primal variable. In particular, $AX^\star=\{0\}$.

If $\|b\|_Y > \e$, $0\notin X^\star$ so $\pi_\e$>0. By \eqref{eq-strong-duality}, $\min J_\e <0$ and thus $0\notin Y^\star$. Then, the relation $y^\star \in -\partial \delta_{\overline{B(b, \e)}}(A x^\star)$ of~\eqref{saddle-point}, implies that $A x^\star$ must lie at the boundary of $\overline{B(b, \e)}$. Since $X^\star$ is convex, so is $AX^\star$. Hence, $AX^\star$ is a convex subset of the boundary of $\overline{B(b, \e)}$, \ie the sphere $\{y \in Y, \,\|y-b\|_Y = \e\}$. The closed ball being strictly convex in the Hilbert space $Y$, any convex subset of its boundary must be a singleton. Thus, there exists some $b^\star \in \overline{B(b, \e)}$ with $\|b^\star - b\|_Y = \e$ such that \begin{equation}\label{unique-target}
AX^\star = \{b^\star\}.\end{equation}

Thus, in any case, the set $A X^\star$ is always reduced to a single point, which we can denote $b^\star$ in general.

We now return to the inclusion $y^\star \in - \partial \delta_{\overline{B(b, \e)}}(A x^\star)  =-\partial \delta_{\overline{B(b, \e)}}(b^\star)$.
First assume $\|b\|_Y \leq \e$, then  $b^\star = 0$ and $y^\star \in -\partial\delta_{\overline{B(b, \e)}}(0)$.
If $\|b\|_{Y}<\e$, $0\in B(b, \e)$, and thus the latter subdifferential reduces to $\{0\}$, leading to $y^\star = 0$.
Otherwise, $\|b\|_{Y}=\e$, $0\in \partial \overline{B (b, \e)}$ and we find
\[y^\star \in\left\{\lambda \frac{b}{\e},\, \lambda \geq 0\right\} = \{\lambda b,\, \lambda \geq 0\}.\]
Restricting the function $J_{\e}$ to the above half-line, using the homogeneities of each of its terms, and the fact that $\|b\|_Y=\e$, we get
\begin{equation*}\label{non-H-case-2}\gamma_0(\lambda):=J_{\e}(\lambda b)=a_0 \lambda^2, \quad \lambda\geq 0,\end{equation*}
where $a_0 = \tfrac{1}{2} \sigma_{K_r}^2(A^\ast b)$.
It is clear that $0$ is the unique minimiser of $\gamma_0$. In other words, we have proved $y^\star = 0$ whenever $\|b\|_Y \leq \e$.

Now assume $\|b\|_Y > \e$. In this case, note that $y^\star \neq 0$, since otherwise the dual problem would admit the minimum $0$, hence the primal optimisation problem would also be of minimum $0$. If so, $x = 0$ is an optimal primal variable, which would mean that $\|A 0-b\|_Y = \|b\|_Y \leq \e$, a contradiction.   

To prove the uniqueness of $y^\star \neq 0$, we argue as follows.
Since $b^\star$ lies at the boundary of $\overline{B(b, \e)}$, we have
\[y^\star \in\left\{\lambda \left(\frac{b-b^\star}{\e}\right), \lambda \geq 0\right\} = \{\lambda \left(b - b^\star\right), \lambda \geq 0\}.\]
Restricting $J_{\e}$ to the above half-line as previously, we find
\[\gamma(\lambda):=J_{\e}(\lambda (b-b^\star)) = a_1 \lambda ^2 + a_2 \lambda, \quad \lambda \geq 0,\]
where, using $\|b - b^\star\|_Y = \e$ and the homogeneities involved $
a_1= \tfrac{1}{2} \sigma_{K_r}^2(A^\ast (b-b^\star))$ and $a_2=-\langle b, b-b^\star  \rangle_Y + \e^2$. 
Because $J_\e$ is bounded below and has negative minimum, we must have $a_1 >0$ and $a_2<0$.

Consequently, $\gamma$ has a unique minimiser $\lambda^\star:=-\tfrac{a_2}{2 a_1}>0$. Hence, $y^\star=\lambda^\star(b-b^\star)$,
and the dual optimal variable is unique.
\end{proof}

Combined with Lemma \ref{opt-cond-rmrk}, this proves Theorem \ref{convex-general-opt} $(i)$ regarding the characterisation of minimisers for \eqref{primal-general}.

\subsection{Exact solvability}
\label{subsec-exact-proof}
Since we are interested in exact solvability, we set $\e = 0$ and will be using the corresponding dual functional~$J_0$. We here prove Theorem~\ref{general-ncs}~$(ii)$, and then Theorem~\ref{convex-general-opt}~$(ii)$.

\begin{lmm}\label{lem-continuation-interpretation}
    The following holds:
\[\exists C >0, \; \forall y \in Y, \;  \scl{b}{y}_Y \leq C \,\sigma_{K_r}(A^\ast y) \quad \implies \quad   \inf_{y \in Y} {J_0}(y)  > -\infty.\]    
\end{lmm}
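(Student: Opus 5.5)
The plan is a direct lower bound on $J_0$ using the hypothesis and completing the square. Assume there is $C>0$ such that $\scl{b}{y}_Y \leq C\,\sigma_{K_r}(A^\ast y)$ for all $y\in Y$, and fix $y \in Y$. Set $s := \sigma_{K_r}(A^\ast y)$.

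First I check the sign of $s$. Since $0\in K_r$, we have $\sigma_{K_r}\geq 0$ everywhere, so $s \geq 0$. (Even without this, the hypothesis applied to $y$ and $-y$ would only bound $\scl{b}{y}_Y$ by $C s$, so the sign matters for the next step, and $0\in K_r$ guarantees it.)

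Then I bound $J_0$ pointwise:
\[
J_0(y) = \tfrac12 s^2 - \scl{b}{y}_Y \;\geq\; \tfrac12 s^2 - C s \;\geq\; \min_{t\geq 0}\Big(\tfrac12 t^2 - C t\Big) = -\tfrac{C^2}{2}.
\]
This bound is uniform in $y$, hence $\inf_{y\in Y} J_0(y) \geq -\tfrac{C^2}{2} > -\infty$, which is the claim.

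By strong duality (Proposition~\ref{strong-duality}) this also gives $\pi_0 \leq \tfrac{C^2}{2}$, i.e.\ some $x\in P_r$ with $Ax=b$ and $j_{K_r}(x)\leq C$. This is consistent with the remark identifying the best constant $C(b)$.

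There is no real obstacle here. The only point requiring care is the nonnegativity of $\sigma_{K_r}$, which comes from the standing assumption $0 \in K_r$. Note also that, unlike Lemma~\ref{lem-coercivity-interpretation}, we do not need coercivity or attainment: finiteness of the infimum is all that is asked.
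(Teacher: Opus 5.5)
Your proof is correct and is essentially the paper's argument: bound $-\langle b,y\rangle_Y$ below by $-C\,\sigma_{K_r}(A^\ast y)$, then use that $t\mapsto \tfrac12 t^2 - Ct$ is bounded below. One small correction: the nonnegativity of $\sigma_{K_r}$ is not actually needed, since $\tfrac12 t^2 - Ct \geq -\tfrac{C^2}{2}$ holds for every $t\in\R$, not only for $t\geq 0$.
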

\begin{proof}
A lower bound for~$J_0$ is readily derived:
\[\forall y \in Y, \quad J_0(y) = \tfrac{1}{2} \sigma_{K_r}^2(A^\ast y) - \scl{b}{y}_Y \geq \tfrac{1}{2} \sigma_{K_r}^2(A^\ast y) -  C\sigma_{K_r}(A^\ast y).\]
Since the function $z \mapsto \tfrac{1}{2}z^2 - z$ is lower bounded on $\R$, so is $J_0$.
\end{proof}

We can now prove Theorem \ref{general-ncs} $(ii)$, which we restate in the following proposition:
\begin{prpstn}
\label{ncs-exact}

We have the equivalence
\[b \in A( P_r) \quad \iff \quad \exists C >0, \; \forall y \in Y, \;  \scl{b}{y}_Y \leq C \,\sigma_{K_r}(A^\ast y). \]
\cz{If $b \in A(P_r)$, the best constant $C$ is given by $C = \sqrt{2 \pi_0}$.}
\end{prpstn}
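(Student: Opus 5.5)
We prove the two implications separately and then identify the best constant through strong duality (Proposition~\ref{strong-duality}).

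\emph{Direct implication.} Assume $b \in A(P_r)$, so there is $x \in P_r$ with $Ax = b$. By \eqref{dom-gauge}, $j_{K_r}(x) < +\infty$. Since $K_r$ is closed and contains $0$, we can write $x = j_{K_r}(x)\, k$ with $k \in K_r$ whenever $j_{K_r}(x) > 0$. The infimum defining the gauge is attained because $K_r$ is closed, convex and contains $0$: the sets $\alpha K_r$ increase with $\alpha$, and $x/\alpha_n \to x/\alpha$ stays in $K_r$ by closedness. If $j_{K_r}(x) = 0$, then $x = 0$ by Lemma~\ref{basic-gauge}(i), hence $b = 0$ and the inequality is trivial. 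Otherwise, for every $y \in Y$,
\[\scl{b}{y}_Y = \scl{x}{A^\ast y}_X = j_{K_r}(x)\,\scl{k}{A^\ast y}_X \leq j_{K_r}(x)\,\sigma_{K_r}(A^\ast y).\]
Taking the infimum over all admissible $x$ shows that the inequality holds with any $C > \inf\{j_{K_r}(x) : Ax = b,\ x \in P_r\} = \sqrt{2\pi_0}$. In fact it holds with $C = \sqrt{2\pi_0}$ itself, since $\pi_0$ is attained by Proposition~\ref{strong-duality}. If this value is $0$, we still have $\sigma_{K_r} \geq 0$ (because $0 \in K_r$), so any $C > 0$ works.

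\emph{Converse implication.} Lemma~\ref{lem-continuation-interpretation} gives $\inf_Y J_0 > -\infty$. Then \eqref{dual-characterisations}, or directly the strong duality $\pi_0 = -\inf_Y J_0 < +\infty$, gives $b \in A(P_r)$.

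\emph{Best constant.} This is the only delicate step. We must show that if the inequality holds with some $C$, then $C \geq \sqrt{2\pi_0}$. From the bound in the proof of Lemma~\ref{lem-continuation-interpretation},
\[J_0(y) \geq \tfrac12 s^2 - C s \geq -\tfrac12 C^2, \qquad s = \sigma_{K_r}(A^\ast y),\]
so $\pi_0 = -\inf_Y J_0 \leq \tfrac12 C^2$, that is, $C \geq \sqrt{2\pi_0}$. Combined with the direct implication, the best constant is exactly $\sqrt{2\pi_0} = \min\{j_{K_r}(x) : Ax = b,\ x \in P_r\}$.

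The main point needing care is the attainment of the gauge, so that $x \in j_{K_r}(x)\,K_r$. Should that fail, the argument with $C = j_{K_r}(x) + \delta$ for every $\delta > 0$ still works, and the sharp constant follows by letting $\delta \to 0$, since the inequality is preserved in the limit for each fixed $y$.
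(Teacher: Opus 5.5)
Your proof is correct and follows the paper's argument: the decomposition $x = j_{K_r}(x)\,v$ with $v\in K_r$ (Lemma~\ref{basic-gauge}(iv)) for the direct implication, and Lemma~\ref{lem-continuation-interpretation} combined with strong duality for the converse. Your extra step $J_0 \geq -\tfrac12 C^2$, which gives $\pi_0\leq \tfrac12 C^2$ for every admissible $C$, is a welcome addition, since the paper only shows that $C=\sqrt{2\pi_0}$ is admissible and never argues that no smaller constant works.
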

\begin{proof}
Assume that $b \in A(P_r)$. Let $x \in P_r$ be such that $Ax = b$. Since $x \in P_r$, we may write $x = j_{K_r}(x) \, v$ with $v \in K_r$, by Lemma~\ref{basic-gauge}(iv).
For all $y \in Y$, we write
\[\scl{b}{y}_Y = \scl{A x}{y}_Y = \scl{x}{A^\ast y}_Y = j_{K_r}(x) \scl{v}{A^\ast y} \leq C \,\sigma_{K_r}(A^\ast y),\]
with $C = j_{K_r}(x)$. By minimising over all possible $x \in P_r$ such that $A x =b$, we may in fact set $C =  \sqrt{2 \pi_0}$.

We now assume that the inequality holds. 
In this case, Lemma~\ref{lem-continuation-interpretation} shows that $J_0$ has a finite infimum, and we may conclude that $b \in A(P_r)$ by~\eqref{dual-characterisations}.
\end{proof}

We can now state the analog of Corollary \ref{cor-equivalence-optim-approx} for exact solvability problems:
\begin{crllr}\label{cor-equivalence-optim-exact}
    The following propositions are equivalent:
    \begin{itemize}
        \item[(i)] $b\in A(P_r)$;
        \item[(ii)] $\inf_{y\in Y} J_0(y)>-\infty$;
        \item[(iii)] The primal problem $(\mathcal{P}_0)$ admits a solution $x^\star.$ 
    \end{itemize}
\end{crllr}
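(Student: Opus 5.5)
I will prove the cycle of implications $(i)\implies(ii)\implies(iii)\implies(i)$, mirroring the proof of Corollary~\ref{cor-equivalence-optim-approx} with $\e=0$.

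For $(i)\implies(ii)$, assume $b\in A(P_r)$. Proposition~\ref{ncs-exact} then provides a constant $C>0$ such that $\scl{b}{y}_Y\leq C\,\sigma_{K_r}(A^\ast y)$ for all $y\in Y$. Lemma~\ref{lem-continuation-interpretation} turns this inequality directly into $\inf_{y\in Y}J_0(y)>-\infty$. Alternatively, one can bypass the lemma: pick $x\in P_r$ with $Ax=b$, so that $\pi_0\leq \tfrac12 j_{K_r}^2(x)<+\infty$ by~\eqref{dom-gauge}, and conclude with the strong duality~\eqref{eq-strong-duality}.

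For $(ii)\implies(iii)$, Proposition~\ref{strong-duality} with $\e=0$ gives $\pi_0=-\inf_{y\in Y} J_0(y)<+\infty$. The same proposition states that a finite primal value is attained, so $(\mathcal{P}_0)$ admits a minimiser $x^\star$.

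For $(iii)\implies(i)$, a minimiser $x^\star$ of $(\mathcal{P}_0)$ has finite cost. By~\eqref{why-aux-prob} with $\e=0$, this means $x^\star\in P_r$ and $\|Ax^\star-b\|_Y\leq 0$, that is $Ax^\star=b$. Hence $b\in A(P_r)$.

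None of these steps is a real obstacle, since all the analytic content (strong duality, attainment, and the dual characterisation) is already in Propositions~\ref{strong-duality} and~\ref{ncs-exact}. The only point needing care is in $(iii)\implies(i)$: there $(\mathcal{P}_0)$ must be read as admitting a minimiser with finite value. If the functional were identically $+\infty$, every point would trivially be a minimiser, so the statement is understood with $\pi_0<+\infty$, which is how~\eqref{why-aux-prob} is used throughout the paper.
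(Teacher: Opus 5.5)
Your proof is correct and matches the paper's own argument: (i)$\implies$(ii) via Proposition~\ref{ncs-exact} and Lemma~\ref{lem-continuation-interpretation}, (ii)$\implies$(iii) via the strong duality and attainment of Proposition~\ref{strong-duality}, and (iii)$\implies$(i) via~\eqref{why-aux-prob}. Your shortcut for (i)$\implies$(ii), going directly through $\pi_0<+\infty$ and~\eqref{eq-strong-duality}, is also valid, and your caveat that a minimiser must have finite cost is exactly the reading the paper uses implicitly.
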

\begin{proof}
    The implication $(i) \implies (ii)$ is a consequence of Proposition \ref{ncs-exact} and Lemma \ref{lem-continuation-interpretation}.

    The implication $(ii)\implies (iii)$ is a direct consequence of the strong duality given in Proposition \ref{strong-duality}: $\pi_0=-\inf_Y J_0<+\infty$ and is hence attained at a minimiser $x^\star$.

    Finally, recalling the implication \eqref{why-aux-prob} given in the introduction, $(iii)$ implies that $Ax^\star=b$, \ie $b\in A(P_r).$
\end{proof}

To prove Theorem \ref{convex-general-opt} $(ii)$, we now let $b\in A(P_r)$.
Note that, in contrast to approximate solvability problems, the associated dual functional $J_0$ does not always admit a minimiser, even though its infimum is finite. As a result, even though a minimiser $x^\star$ always exists for $(\mathcal{P}_0)$, the constructive characterisation \eqref{opt-cond-exact-conv-gen} may not be available. 
However, under the hypothesis that $J_0$ does admit a minimiser, we can apply Lemma~\ref{opt-cond-rmrk} with $\e=0$ and conclude.

\subsection{Closed convex case}
\label{subsec-closed-proof}

We now turn to the particular case of closed convex cones, described by Theorems \ref{intersect-ball-ncs} and \ref{intersect-ball-opt}. Let $P_r$ be a closed and convex cone containing $0$. Then, noting that $\cone( P_r \cap \overline{B(0,1)}) = P_r$, we have a natural generating set $P_r \cap \overline{B(0,1)}$ which is a closed, convex and bounded set containing $0$. 

\begin{lmm}
\label{intersect-ball}
Let $K_r := P_r \cap \overline{B(0,1)}$. Then for all $x \in X$, there holds
\[\sigma_{K_r}(x) = \|x_+\|_X \quad \text{ and }\quad \sigma_{K_r}(x)\, \partial (\sigma_{K_r})(x) =  \{x_+\}.\]  
\end{lmm}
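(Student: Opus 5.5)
We first compute $\sigma_{K_r}$ via Moreau's decomposition $x = x_+ + x_-$, with $x_+\in P_r$, $x_-\in P_r^\circ$ and $\scl{x_+}{x_-}_X=0$. For any $v\in K_r\subset P_r$ we have $\scl{x_-}{v}_X\le 0$, hence
\[
\scl{x}{v}_X=\scl{x_+}{v}_X+\scl{x_-}{v}_X\le \scl{x_+}{v}_X\le \|x_+\|_X\|v\|_X\le \|x_+\|_X .
\]
So $\sigma_{K_r}(x)\le\|x_+\|_X$. If $x_+=0$, then $0\in K_r$ shows $\sigma_{K_r}(x)=0$. If $x_+\neq 0$, take $v_0:=x_+/\|x_+\|_X\in K_r$; then $\scl{x}{v_0}_X=\|x_+\|_X$, using $\scl{x_-}{x_+}_X=0$. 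This proves the first identity.

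For the second identity we use that, since $K_r$ is closed, bounded and convex, $\partial\sigma_{K_r}(x)$ is exactly the set of maximisers of $v\mapsto\scl{x}{v}_X$ over $K_r$. This is standard: $\sigma_{K_r}=\delta_{K_r}^\ast$, so $p\in\partial\sigma_{K_r}(x)$ iff $x\in\partial\delta_{K_r}(p)=N_{K_r}(p)$, and this means $p$ is a maximiser. We then identify the maximisers in both cases.

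\textbf{Case $x_+\neq 0$.} Equality $\scl{x}{v}_X=\|x_+\|_X$ forces equality throughout the chain above. In particular $\scl{x_+}{v}_X=\|x_+\|_X\|v\|_X$ with $\|v\|_X=1$. Equality in Cauchy--Schwarz (Hilbert space, so the ball is strictly convex) then gives $v=x_+/\|x_+\|_X$. Hence $\partial\sigma_{K_r}(x)=\{x_+/\|x_+\|_X\}$, and multiplying by $\sigma_{K_r}(x)=\|x_+\|_X$ yields $\{x_+\}$.

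\textbf{Case $x_+=0$.} Here $\sigma_{K_r}(x)=0$, and $\partial\sigma_{K_r}(x)$ is nonempty: $\sigma_{K_r}$ is finite and continuous, or simply $0$ is a maximiser. Therefore $0\cdot\partial\sigma_{K_r}(x)=\{0\}=\{x_+\}$. Note that this product is the set product, which is the convention used in \eqref{saddle-point}.

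The only delicate point is the equality case in the first case. It needs both equalities to hold at once, $\scl{x_-}{v}_X=0$ and Cauchy--Schwarz with $\|v\|_X=1$. Only the Cauchy--Schwarz equality together with $\|v\|_X\le 1$ is required to pin down $v$, and everything else is routine.
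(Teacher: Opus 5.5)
Your proof is correct and follows the paper's argument. Moreau's decomposition gives the bound $\langle x, v\rangle_X \le \|x_+\|_X$ on $K_r$, attained at $x_+/\|x_+\|_X$, and the equality case in Cauchy--Schwarz shows this maximiser is unique. You are also slightly more explicit than the paper in justifying that $\partial\sigma_{K_r}(x)$ is exactly the set of maximisers and is nonempty when $x_+=0$, points the paper leaves implicit.
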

\begin{proof}
    Let us start with the case where $x_+ = 0$. In this case $x \in P_r^\circ$, hence $\sigma_{K_r}(x) \leq 0$, and since $K_r$ contains $0$, we have $\sigma_{K_r}(x) = 0 = \|x_+\|_X$. Hence, both identities hold.
    
    Now assume that $x_+ \neq 0$. For all $v \in K_r$, i.e., $v \in P_r$ with $\|v\|_X \leq 1$, 
\[\scl{v}{x}_X = \scl{v}{x_+}_X + \scl{v}{x_-}_X \leq \scl{v}{x_+}_X \leq \|v\|_X \|x_+\|_X \leq \|x_+\|_X,\]
where equality is attained with the choice $v = \tfrac{x_+}{\|x_+\|_X}$ since $\scl{x_+}{x_-}_X=0$.  Hence, $\sigma_{K_r}(x) = \|x_+\|_X$.

Let us show $\tfrac{x_+}{\|x_+\|_X}$ is the unique choice to reach equality. Indeed, if $v \in P_r$, $\|v\|_X \leq 1$ is such that $\scl{v}{x}_X = \|x_+\|_X$, then  
\[\|x_+\|_X = \scl{x_+}{v}_X \leq \|x_+\|_X \|v\|_X \leq \|x_+\|_X.\]
All these inequalities must be equalities, thus enforcing  $\|v\|_X =1$ on the one hand, and nonnegative linear dependence of $v$ and $x_+$ on the other hand, which amounts to $v = \tfrac{x_+}{\|x_+\|_X}$. We have proved $\partial (\sigma_{K_r})(x) = \{\tfrac{x_+}{\|x_+\|_X}\}$, and hence $ \sigma_{K_r}(x)\, \partial (\sigma_{K_r})(x) = \{x_+\}$.
\end{proof}

\cz{
\begin{lmm}
\label{unique-primal-closed-convex}
Let $K_r :=  P_r \cap \overline{B(0,1)}$. If $b \in \overline{A(P_r)}$, then for every $\e>0$, $(\mathcal{P}_{\e}^\mathrm{cl})$ admits a unique solution. 

If $b\in A(P_r)$, then $(\mathcal{P}_0^{\text{cl}})$ admits a unique solution.
\end{lmm}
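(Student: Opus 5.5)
Existence of a minimiser is already settled: with $K_r = P_r\cap\overline{B(0,1)}$, Corollary~\ref{cor-equivalence-optim-approx} gives a solution of $(\mathcal{P}_\e)$ for every $\e>0$ when $b\in\overline{A(P_r)}$, and Corollary~\ref{cor-equivalence-optim-exact} gives a solution of $(\mathcal{P}_0)$ when $b\in A(P_r)$. It remains to identify $(\mathcal{P}_\e)$ with $(\mathcal{P}^{\mathrm{cl}}_\e)$ and to prove uniqueness by strict convexity.

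\textbf{Step 1: the gauge is the norm on $P_r$.} I will show that $j_{K_r}(x)=\|x\|_X$ for $x\in P_r$ and $j_{K_r}(x)=+\infty$ otherwise. For $x\in P_r$ and $\alpha>0$, the set $\alpha K_r = P_r\cap\overline{B(0,\alpha)}$, because $P_r$ is a cone. Hence $x\in\alpha K_r$ if and only if $\|x\|_X\le\alpha$, and the infimum over such $\alpha$ is $\|x\|_X$. The case $x=0$ is trivial. If $x\notin P_r$, then $x\notin\alpha K_r$ for every $\alpha>0$, which is \eqref{dom-gauge}. It follows that $\tfrac12 j_{K_r}^2=\tfrac12\|\cdot\|_X^2+\delta_{P_r}$, so $\mathcal{F}_\e=\mathcal{F}^{\mathrm{cl}}_\e$ and the two problems coincide.

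\textbf{Step 2: uniqueness by strict convexity.} The functional $\mathcal{F}^{\mathrm{cl}}_\e$ is the sum of three terms: the strictly convex function $\tfrac12\|\cdot\|_X^2$, the indicator of the convex set $P_r$, and the indicator of the convex set $\{x : \|Ax-b\|_Y\le\e\}$, which is convex as the preimage of a closed ball under an affine map. The sum is therefore strictly convex on its domain, which is a convex set. Suppose $x_1\neq x_2$ are two minimisers with common value $\pi_\e<+\infty$. Then $\tfrac{x_1+x_2}{2}$ is feasible, and the parallelogram identity gives
\[
\mathcal{F}^{\mathrm{cl}}_\e\!\left(\tfrac{x_1+x_2}{2}\right) = \pi_\e - \tfrac18\|x_1-x_2\|_X^2 < \pi_\e,
\]
a contradiction. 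This argument works for $\e>0$ and equally for $\e=0$, where the feasible set is $\{Ax=b\}\cap P_r$. Equivalently, the minimiser is the projection of $0$ onto the nonempty closed convex set $P_r\cap A^{-1}(\overline{B(b,\e)})$.

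\textbf{Main obstacle.} There is no real obstacle; the only point needing care is the gauge computation in Step 1. One has to make sure that the scaling $\alpha K_r = P_r\cap\overline{B(0,\alpha)}$ uses both the cone property of $P_r$ and the fact that $0\in K_r$. An alternative is to invoke Lemma~\ref{basic-gauge}, but the direct computation above suffices.
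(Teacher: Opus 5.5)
Your proof is correct and follows essentially the paper's route. Existence comes from the earlier duality corollaries, and uniqueness comes from strict convexity of the Hilbert norm. You apply the parallelogram identity at the midpoint of two minimisers, whereas the paper observes that the convex set of minimisers lies on the sphere of radius $\lambda^\star_\e$. Your explicit check that $\tfrac12 j_{K_r}^2=\tfrac12\|\cdot\|_X^2+\delta_{P_r}$, which the paper takes for granted, is a useful addition. So is your closing projection remark, which, since $P_r$ is closed here, even yields existence without invoking duality.
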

\begin{proof}
In both cases, we know that the primal problems reach their infima, which we denote $\tfrac{1}{2}(\lambda^\star_\e)^2$ (resp.~$\frac{1}{2}(\lambda^\star)^2$). Since the objective functionals are convex, the sets of minimisers are convex, and are subsets of $\lambda^\star_\e S^1$ (resp. $\lambda^\star S^1$) where $S^1$ is the unit sphere of $X$. The latter set being strictly convex, both sets of minimisers are reduced to a singleton.
\end{proof}
}
Lemmas~\ref{intersect-ball} and~\ref{unique-primal-closed-convex} combined with Theorem~\ref{general-ncs} (resp. Theorem~\ref{convex-general-opt}) proves Theorem~\ref{intersect-ball-ncs} (resp. Theorem~\ref{intersect-ball-opt}).

\subsection{Optimality conditions and constructiveness}\label{subsec-constructivity-uniqueness}
Recall that, for a given $\e \geq 0$, a primal-dual optimal pair $(x^\star, y^\star)$ satisfies $x^\star \in \sigma_{K_r}(A^\ast y^\star)\, \partial  \sigma_{K_r}(A^\ast y^\star)$.
Most of what follows is dedicated to analysing this very inclusion in various cases. 
This will allow us to prove Proposition \ref{prop-opt-cond-ncs} and Theorem \ref{general-opt}.

\subsubsection{Cone generated by a bounded closed convex set}
\label{subsubsec-convex-constructive}
Here, we assume that $P_r = \cone(K_r)$ with $K_r$ a closed convex bounded set containing $0$. As a result, $P_r$ is convex but not necessarily closed. 

In this general setting, the set $\sigma_{K_r}(x)\, \partial (\sigma_{K_r})(x)$ is not necessarily reduced to a singleton for a given $x \in X$. 

\begin{lmm}
\label{extremal-singleton}
    Assume that~\eqref{extremal} holds. Then,
 \[\forall y \in Y, \quad \sigma_{K_r}(A^\ast y)\, \partial \sigma_{K_r}(A^\ast y) \text{ is reduced to a singleton}.\]    
\end{lmm}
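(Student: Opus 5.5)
Fix $y\in Y$ and set $z:=A^\ast y$. By \eqref{extremal}, either $z\notin\sing(K_r)$ or $z\in P_r^\circ$, and I will treat these two cases separately.

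In the first case, $z\notin\sing(K_r)$, the set of maximisers in the definition of $\sigma_{K_r}(z)$ is a single point $v_z\in K_r$. Since $K_r$ is bounded and closed, the supremum is attained, so this set is nonempty. I will then recall the standard identification of $\partial\sigma_{K_r}(z)$ with the set of maximisers $\argmax_{v\in K_r}\langle z,v\rangle_X$, which holds because $K_r$ is closed and convex. This is also how the paper introduces singular vectors, as the points where $\partial\sigma_{K_r}$ fails to be a singleton. It follows that $\partial\sigma_{K_r}(z)=\{v_z\}$, and hence $\sigma_{K_r}(z)\,\partial\sigma_{K_r}(z)=\{\sigma_{K_r}(z)v_z\}$ is a singleton.

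In the second case, $z\in P_r^\circ$, Lemma~\ref{lem-separation-interpretation} gives $\sigma_{K_r}(z)=0$. The subdifferential $\partial\sigma_{K_r}(z)$ is nonempty: it contains the maximisers, and in particular $0\in K_r$ is a maximiser. Multiplying this nonempty set by the scalar $0$ gives $\{0\}$, a singleton. The two cases exhaust all $y$, which concludes the proof.

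The only step requiring care is the equality $\partial\sigma_{K_r}(z)=\argmax_{K_r}\langle z,\cdot\rangle_X$. I would justify it by the Fenchel–Young equality. Since $\sigma_{K_r}=\delta_{K_r}^\ast$ and $\delta_{K_r}\in\Gamma_0(X)$, we have $p\in\partial\sigma_{K_r}(z)$ if and only if $\sigma_{K_r}(z)+\delta_{K_r}(p)=\langle z,p\rangle_X$. This holds exactly when $p\in K_r$ and $\langle z,p\rangle_X=\sigma_{K_r}(z)$.

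I would also make explicit that the product $0\cdot S$ of the scalar $0$ with a set $S$ is $\{0\}$ precisely when $S\neq\emptyset$. Nonemptiness is guaranteed here because $0\in K_r$.
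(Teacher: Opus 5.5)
Your proof is correct and takes essentially the same route as the paper's. Both identify $\partial\sigma_{K_r}(z)$ with the set of maximisers of $\langle z,\cdot\rangle_X$ over $K_r$, use that $\sigma_{K_r}(z)=0$ exactly when $z\in P_r^\circ$, and invoke the definition of $\sing(K_r)$; the paper phrases this as an equivalence, while you split into cases and note that $0$ is a maximiser when $z\in P_r^\circ$, which gives nonemptiness there without weak compactness. One small precision: in infinite dimension, attainment of the supremum needs $K_r$ to be convex as well as closed and bounded (hence weakly compact), although in your first case nonemptiness is already built into $z\notin\sing(K_r)$.
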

\begin{proof}
Let $x \in X$.
Let us first remark that the latter set is never empty, because \[\partial \sigma_{K_r}(x) = \left\{v \in K_r, \; \langle x, v\rangle_X = \sup_{v\in K_r}\langle x, v\rangle_X\right\},\]
and the weakly continuous function $v \mapsto \langle x, v\rangle_X$ is bounded and attains its supremum on the weakly compact set $K_r$.

Since $\sigma_{K_r}(x) = 0$ if and only if $x \in P_r^\circ$, and by definition of the cone of singular vectors, we end up with the equivalence
\[\sigma_{K_r}(x)\, \partial \sigma_{K_r}(x) \text{ is reduced to a singleton} \quad \iff \quad x \in P_r^\circ \cup \sing(K_r)^c.\]

Under~\eqref{extremal}, we obtain the result by applying the above to $x = A^\ast y$ for $y \in Y$.
\end{proof}

This Lemma then immediately yields Proposition \ref{prop-opt-cond-ncs}.

\subsubsection{General cone}
\label{subsubsec-general-constructive}
Here, we assume that $P$ is any cone containing $0$, $K$ is a bounded set containing $0$ such that $\cone(K) = P$. Then, we let $K_r := \adhco(K)$ and $P_r = \cone(K_r)$. 
Recall that the assumption~\eqref{extremal} refers to the inclusion $\mathrm{Ran}(A^\ast) \subset P_r^\circ \cup \sing(K_r)^c$, while~\eqref{extremality} refers to the assumption $\ext(K_r)\subset K$.

\begin{lmm}
\label{extremal-extremality}
    Assume that~\eqref{extremal}-\eqref{extremality} holds. Then,
 \[\forall y \in Y, \quad \sigma_{K_r}(A^\ast y)\, \partial \sigma_{K_r}(A^\ast y) \text{ is reduced to a single element in $P$}.\]    
\end{lmm}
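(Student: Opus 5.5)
By Lemma~\ref{extremal-singleton}, hypothesis~\eqref{extremal} already guarantees that for every $y\in Y$ the set $\sigma_{K_r}(A^\ast y)\,\partial\sigma_{K_r}(A^\ast y)$ is a singleton. So the only thing left to prove is that this single element lies in $P$, and I will fix $y\in Y$, set $x:=A^\ast y$, and split into two cases according to whether $x\in P_r^\circ$ or not.

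\emph{Case $x\in P_r^\circ$.} Lemma~\ref{lem-separation-interpretation} gives $\sigma_{K_r}(x)=0$. Since $\partial\sigma_{K_r}(x)$ is nonempty (as noted in the proof of Lemma~\ref{extremal-singleton}), the product set is exactly $\{0\}$. Because $0\in K\subset \cone(K)=P$, the singleton lies in $P$.

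\emph{Case $x\notin P_r^\circ$.} Here $\sigma_{K_r}(x)>0$ and, by~\eqref{extremal}, $x\notin\sing(K_r)$, so the face
\[F_x:=\partial\sigma_{K_r}(x)=\{v\in K_r,\ \langle x,v\rangle_X=\sigma_{K_r}(x)\}\]
is a single point $\{v_x\}$. I will show that $v_x\in\ext(K_r)$. Suppose $v_x=tv_1+(1-t)v_2$ with $v_1,v_2\in K_r$ and $t\in(0,1)$. Then $\langle x,v_i\rangle_X\le\sigma_{K_r}(x)$ for each $i$, while the convex combination attains $\sigma_{K_r}(x)$. This forces $\langle x,v_1\rangle_X=\langle x,v_2\rangle_X=\sigma_{K_r}(x)$, hence $v_1,v_2\in F_x=\{v_x\}$ and $v_1=v_2=v_x$. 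So $v_x$ is extreme; in words, an exposed face reduced to a point is an extreme point.

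Applying~\eqref{extremality} then gives $v_x\in K$. The singleton is $\{\sigma_{K_r}(x)\,v_x\}$, which is a nonnegative multiple of an element of $K$ and therefore belongs to $\cone(K)=P$.

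The argument is short; the step that needs the most care is recognising that the unique maximiser is an extreme point. The nonemptiness of $\partial\sigma_{K_r}(x)$ (weak compactness of $K_r$) has already been handled earlier in the paper.
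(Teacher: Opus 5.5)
Your proof is correct and follows the paper's argument. Lemma~\ref{extremal-singleton} gives the singleton; the case $\sigma_{K_r}(A^\ast y)=0$ yields $0\in P$; otherwise the unique maximiser of $\langle A^\ast y,\cdot\rangle_X$ over $K_r$ is an extreme point, hence lies in $K$ by~\eqref{extremality}. The only difference is that you prove extremality directly (a singleton exposed face is an extreme point), while the paper appeals to the principle that a set of maximisers contains an extreme point, which tacitly needs the weak compactness of $K_r$; your version is slightly more elementary and self-contained.
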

\begin{proof}
By Lemma~\ref{extremal-singleton}, we know that $\sigma_{K_r}(A^\ast y)\, \partial \sigma_{K_r}(A^\ast y)$ is reduced to a singleton. If $\sigma_{K_r}(A^\ast y) = 0$, this singleton equals $0$, which is an element of $P$. If $\sigma_{K_r}(A^\ast y) \neq 0$, then this singleton is a multiple of a unique vector $v$ in $K_r$ maximising the function $v \mapsto \langle A^\ast y, v\rangle_X$ over $K_r$. The set of maximisers of a convex function over a convex set must contain at least an extreme point. Hence, $v$ is in fact in $\ext(K_r)$, and by~\eqref{extremality}, it is an element of $K$. Consequently, $\sigma_{K_r}(A^\ast y)\, \partial (\sigma_{K_r})(A^\ast y)$ contains the single element $\sigma_{K_r}(A^\ast y)\, v$, which lies in the cone $\cone(K) = P$. 
\end{proof}

This lemma, combined with Corollary \ref{cor-single-x-in-subdiff} and Theorems \ref{intersect-ball-opt} and \ref{convex-general-opt} respectively, completes the proofs of Theorem~\ref{general-opt} $(i)$ and $(ii)$ respectively.

\cz{\section{Dual attainment and shared normal vectors}
\label{sec-constructiveness}}

In this Section, we discuss further the constructive character of our approach. Constructiveness is to be understood here in the sense that an exact (or approximate) solution $x^\star$ to \eqref{farkas-problem} can be characterised by 
\[x^\star \in \mathcal{F}(y^\star),\]
where $y^\star\in Y$, $\mathcal{F}$ is a set-valued mapping on $Y$, and both $y^\star$ and $\mathcal{F}$ can be approximated numerically. 
The situation is even more favorable if the identity above completely characterises $x^\star$ - \ie $\mathcal{F}(y^\star)$ is reduced to a singleton.

Given Theorems \ref{intersect-ball-opt}, \ref{convex-general-opt} and \ref{general-opt}, we will consider the more specific terminology of a \textbf{weakly constructive solution} for situations where \textit{the dual problem admits a minimiser}, which leads to a constructive characterisation by Theorems \ref{intersect-ball-opt}, \ref{convex-general-opt} and \ref{general-opt}. The more amenable situation where the saddle-point relation uniquely defines $x^\star$ corresponds to what we call a \textbf{constructive solution.} 

Let us first remark that, by Theorem \ref{general-opt}, our method is always weakly constructive for approximate resolution. 
We thus focus this section's study to the problem of exact resolution.

\

\subsection{Conditions for the dual problem to attain its infimum}

Recall that Theorems \ref{intersect-ball-ncs} and \ref{general-ncs} rely on an unusual application of Fenchel-Rockafellar's duality theorem, in which \textbf{regularity conditions on the dual problem} are assumed, leading to the solution of the primal problem being reached at a minimiser $x^\star$ if the infimum value of the primal problem is finite. This approach does not, however, tell us whether the dual problem reaches its infimum, as remarked in Theorem \ref{intersect-ball-opt}.

Still, it is clear from Theorems \ref{intersect-ball-opt} and \ref{general-opt} that if the dual problem admits a minimiser $y^\star$, then our method is weakly constructive through the saddle-point condition
\[x^\star \in \sigma_{K}(A^\ast y^\star) \, \partial \sigma_{K}(A^\ast y^\star).\]

It is therefore natural to explore sufficient (potentially necessary) conditions for which the dual functional reaches its infimum.
There is extensive literature on this matter. \cz{Let us mention, in close relation to our work, the articles \cite{zualinescu2008zero, zualinescu2012duality}, which study the constrained linear equation \eqref{farkas-problem} by considering a linear auxiliary optimisation problem and its dual. In these works, the author derives sufficient conditions for the dual to reach its infimum. In particular, one of these sufficient conditions is for \eqref{farkas-problem} to have a solution, along with a generalised Slater condition. }In a more general framework,~\cite{boct2008revisiting} provides a necessary and sufficient condition, but it requires knowing the infimum of the primal problem and hence usually is intractable from the practical point of view.

Various sufficient conditions have been proposed, relying on appropriate strong duality: under a \textit{regularity condition on the primal problem}, the primal and dual problems have the same value and the dual problem reaches its infimum if it is finite. The idea is thus to formulate weaker regularity conditions on the primal problem (using weakened notions of the relative interior) that still ensure strong duality see~\cite{Teboulle1990, boct2011comparison, boct2012regularity, BorweinRelativeInterior2003, borwein2010convex}.  Relying on the notions of quasi-relative interior (and strong quasi-relative interior, respectively),\footnote{For a subset $C$ of a Hilbert space $H$, the quasi-relative interior $\mathrm{qri}(C)$ is the set of vectors $x \in H$ such that $\overline{\cone(C - x)}$ is a subspace of $X$, while  the strong  quasi-relative interior $\mathrm{sqri}(C)$ is the set of vectors $x \in H$ such that $\cone(C - x)$ is a closed subspace of $X$.} one such rather weak condition is
$0 \in \mathrm{sqri}(P - \{x \in X, A x= b\})$~\cite{boct2012regularity}.
Some relevant settings covered by all these works are:
\begin{itemize}
    \item the case where $P = X$,
    \item when $X$ is of finite dimension, the case where $P$ is a subspace of $X$,
    \item when $Y$ is of finite dimension, the case where $b \in A\, \mathrm{qri}(P)$ provided that $\mathrm{qri}(P) \neq \emptyset$.
\end{itemize}

The first two conditions are somewhat restrictive, while the last condition is difficult to check in practice, since it requires knowledge about the set of solutions to the affine equation $Ax=b$ in relation to the cone~$P_r$, which is precisely what we hope to study.

Another possible way to proceed would be to exploit the saddle-point conditions directly:
\begin{lmm}\label{lem-saddle-exact-2}
   Let $K$ be a bounded closed convex set containing $0$, and $P=\cone(K)$. 
    Suppose that $b\in A(P)$, so that the primal problem admits a minimiser $x^\star$. Then, $y^\star\in Y$ is a minimiser to the dual problem if and only if
    \[x^\star\in \sigma_{K}(A^\ast y^\star) \, \partial \, \sigma_{K}(A^\ast y^\star).\]
\end{lmm}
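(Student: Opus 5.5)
The plan is to prove the lemma directly from the saddle-point characterisation \eqref{saddle-point} specialised to $\e = 0$, where $G = \delta_{\{b\}}$. Two facts are used throughout: $Ax^\star = b$, and strong duality (Proposition~\ref{strong-duality}) gives $\pi_0 = -\inf_Y J_0$, with $\pi_0 = \tfrac12 j_K^2(x^\star)$ finite. The key observation is that when $\e=0$ the second condition in \eqref{saddle-point} is automatic. Indeed, $-\partial\delta_{\{b\}}(Ax^\star) = -\partial\delta_{\{b\}}(b) = Y$, since the normal cone to a singleton is the whole space. So the only remaining condition is $x^\star \in \partial F^\ast(A^\ast y^\star)$.

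\emph{Forward implication.} Let $y^\star$ minimise $J_0$. Then $(x^\star,y^\star)$ is a primal-dual optimal pair, because $x^\star$ is a primal minimiser and the values match by strong duality. The first line of \eqref{saddle-point} then gives $x^\star \in \sigma_K(A^\ast y^\star)\,\partial\sigma_K(A^\ast y^\star)$.

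\emph{Reverse implication.} Suppose $x^\star \in \sigma_K(A^\ast y^\star)\,\partial\sigma_K(A^\ast y^\star) = \partial(\tfrac12\sigma_K^2)(A^\ast y^\star) = \partial F^\ast(A^\ast y^\star)$, using the chain rule quoted after \eqref{saddle-point}. The equality case of Fenchel–Young, together with Lemma~\ref{conj-supp-square} ($F^{\ast\ast}=F$), gives
\[
F(x^\star) + F^\ast(A^\ast y^\star) = \langle x^\star, A^\ast y^\star\rangle_X = \langle Ax^\star, y^\star\rangle_Y = \langle b, y^\star\rangle_Y .
\]
Rearranging,
\[
J_0(y^\star) = \tfrac12\sigma_K^2(A^\ast y^\star) - \langle b, y^\star\rangle_Y = -\tfrac12 j_K^2(x^\star) = -\pi_0 = \inf_Y J_0 ,
\]
so $y^\star$ is a minimiser.

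As a check on the forward direction, one can also argue without citing the saddle-point folklore. Start from $J_0(y^\star) = -\pi_0 = -F(x^\star)$, which gives $F(x^\star) + F^\ast(A^\ast y^\star) = \langle x^\star, A^\ast y^\star\rangle_X$. This is again the Fenchel–Young equality case, which means exactly $x^\star \in \partial F^\ast(A^\ast y^\star)$.

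The only delicate point is the subdifferential identity $\partial(\tfrac12\sigma_K^2)(z) = \sigma_K(z)\,\partial\sigma_K(z)$ at points where $\sigma_K(z)=0$. The right-hand side is then $\{0\}$, and we must make sure the left-hand side is also $\{0\}$ rather than larger. Since this identity is already used in \eqref{saddle-point}, I will rely on it. If needed, it can be checked directly: at such $z$ one has $\tfrac12\sigma_K^2(z)=0$, and a subgradient $p$ must satisfy $\tfrac12\sigma_K^2(w) \ge \langle p, w-z\rangle$ for all $w$. Taking $w = z + tu$ and letting $t\to0$ gives $\langle p,u\rangle\le 0$ for every direction $u$, since $\sigma_K$ is Lipschitz and the square makes the left side $O(t^2)$. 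Hence $p=0$.

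Everything else is bookkeeping with strong duality, which holds unconditionally here.
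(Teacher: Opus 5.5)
Your proof is correct and takes the same route as the paper. Both specialise the saddle-point conditions \eqref{saddle-point} to $\e=0$, note that the second condition holds automatically because $\partial\delta_{\{b\}}(b)=Y$, and conclude that only $x^\star\in\partial F^\ast(A^\ast y^\star)$ remains. The one difference is that you spell out the cited primal--dual folklore through the Fenchel--Young equality case and strong duality, and you check the chain rule where $\sigma_K(A^\ast y^\star)=0$; this makes the argument self-contained without changing its substance.
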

\begin{proof}
    Given that $x^\star$ is a minimiser for the primal problem, $y^\star$ is a minimiser if and only if $(x^\star, y^\star)$ fulfill the saddle-point conditions \eqref{saddle-point}. Given that, by hypothesis, $Ax^\star=b$, we have $\partial \delta_{\{b\}}(Ax^\star)=\partial \delta_{\{b\}}(b)=Y. $ Hence, the second saddle-point condition is always satisfied, and the first saddle-point condition is a necessary and sufficient condition for $y^\star$ to be a minimiser for the dual problem.
\end{proof}
A sufficient condition for constructiveness immediately follows:
\begin{lmm}
Let $P$ be a closed convex cone, $K := P \cap \overline{B(0,1)}$. 

If $P \subset \ran(A^\ast)$, then for every $b\in A(P)$, the solution to \eqref{exact-Farkas-problem} is constructive. 
\end{lmm}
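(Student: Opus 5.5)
The plan is to exhibit a dual minimiser explicitly and then apply Lemma~\ref{lem-saddle-exact-2}. Once $(\mathcal{D}_0^{\mathrm{cl}})$ admits a minimiser, Theorem~\ref{intersect-ball-opt}~(ii) gives $x^\star = (A^\ast y^\star)_+$. Combined with Lemma~\ref{unique-primal-closed-convex}, which gives uniqueness of $x^\star$, and Lemma~\ref{intersect-ball}, which shows that $\sigma_K(z)\,\partial\sigma_K(z)=\{z_+\}$ is always a singleton, this makes the characterisation constructive.

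Let $b \in A(P)$. By Lemma~\ref{unique-primal-closed-convex} and Corollary~\ref{cor-equivalence-optim-exact}, $(\mathcal{P}_0^{\mathrm{cl}})$ has a unique minimiser $x^\star \in P$ with $Ax^\star = b$.

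Since $x^\star \in P \subset \ran(A^\ast)$, there exists $y^\star \in Y$ with $A^\ast y^\star = x^\star$. Because $x^\star \in P$ and $P$ is closed and convex, its projection onto $P$ is itself: $(A^\ast y^\star)_+ = x^\star$. By Lemma~\ref{intersect-ball}, with $K = P\cap\overline{B(0,1)}$,
\[
\sigma_K(A^\ast y^\star)\,\partial\sigma_K(A^\ast y^\star) = \{(A^\ast y^\star)_+\} = \{x^\star\},
\]
so in particular $x^\star$ belongs to this set. Lemma~\ref{lem-saddle-exact-2} then says that $y^\star$ minimises $J_0$, so the dual problem attains its infimum. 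Since the set above is a singleton, the saddle-point relation determines $x^\star$ uniquely, which is exactly the definition of a constructive solution.

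No step here is genuinely hard. The only points needing care are that Lemma~\ref{lem-saddle-exact-2} holds for any bounded closed convex generator, which includes $K = P\cap\overline{B(0,1)}$, and the observation that a point of $P$ is its own projection onto $P$. One could also note that only $x^\star \in \ran(A^\ast)$ is actually used, so the hypothesis $P\subset\ran(A^\ast)$ is stronger than needed.
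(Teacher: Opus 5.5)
Your proof is correct and follows essentially the paper's argument: write $x^\star\in P\subset\ran(A^\ast)$ as $A^\ast y^\star$, observe that $(A^\ast y^\star)_+=x^\star$, and use Lemma~\ref{intersect-ball} to get $\sigma_K(A^\ast y^\star)\,\partial\sigma_K(A^\ast y^\star)=\{x^\star\}$, so that $y^\star$ is a dual minimiser and the relation pins down $x^\star$. The only difference is that you explicitly take $x^\star$ to be the primal minimiser and cite Lemma~\ref{lem-saddle-exact-2}, two steps the paper's short proof leaves implicit.
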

\begin{proof}
Let $b\in A(P)$, and denote $x^\star$ a solution to \eqref{exact-Farkas-problem}. By hypothesis, there exists $y^\star\in Y$ such that $x^\star=A^\ast y^\star$. Then,
   $x^\star = A^\ast  y^\star = (A^\ast  y^\star)_+ = \sigma_{K}(A^\ast y^\star)\, \partial  \sigma_{K}(A^\ast y^\star)$ by Lemma~\ref{intersect-ball}.
\end{proof}

For the remainder of this section, we shall endeavour to study the problem of minimising the dual functional, by finding vectors that satisfy the saddle-point condition. 
In the next subsection, we shall see how this can be understood through a geometric interpretation.

\subsection{Geometric interpretation: shared normal vectors}
In this section, unless otherwise stated, we operate under the following assumptions:
\begin{assum}\label{assum-constructive}
\
    \begin{itemize}
        \item $K\subset X$ is a bounded, closed and convex set containing $0$. We define, as usual, $P=\cone (K).$ Recall that $P$ is not necessarily closed. 
        \item $b\neq 0$.
        \item the primal problem $(\mathcal{P}_0)$ has a solution $x^\star\neq 0$, \ie $Ax^\star=b$ and
\[\pi_0=\inf_{Ax=b} \tfrac12j_K^2(x)=\tfrac12 j_K^2(x^\star)=:\tfrac12 \lambda^\star.\]
    \end{itemize}
    \end{assum}
Under the two first assumptions, and if $b \in A(P)$, then the last assumption is automatically satisfied as per Corollary~\ref{cor-equivalence-optim-exact}.

Note that we thus have $A^{-1}(b)=x^\star + \ker(A)$. Then, clearly, for any $x\in A^{-1}(b)$, we have $N_{A^{-1}(b)}(x)= (\mathrm{Ker}(A))^{\perp}=\overline{\ran (A^\ast)}$.

On the other hand, recall that by property of the support function, for any convex set $C\subset X$, and for any $v\in X$, 
\[\partial \sigma_C(v)=\argmax_{x\in C}\langle x, v \rangle_X.\]
Moreover, by definition of the normal cone, we have
\begin{equation}\label{subdiff-normal-cone}v\in N_C(x) \iff x\in \partial \sigma_C(v).\end{equation}

We now establish a proposition that gives us a necessary, and sometimes sufficient, condition for the dual problem to reach its infimum. It relies on the notion of shared normal vectors.
\cz{\begin{dfntn}[Shared normal vector]
Let $C_1,C_2\subset X$ be closed convex sets and $x\in C_1\cap C_2$.
Given a vector $v \in X$, we say that $C_1$ and $C_2$ \emph{share the normal vector} $v$ at $x$ if $v\in N_{C_1}(x)\cap N_{C_2}(x)$, i.e., the two sets admit a common supporting hyperplane
at the contact point $x$ with normal $v$.
\end{dfntn}
}
\begin{prpstn}\label{prop-dual-min-shared-normal}
Under Assumption \ref{assum-constructive}, the properties
\begin{itemize}
    \item[(i)] the dual problem reaches its infimum at $y^\star \in Y$,
    \item[(ii)] the convex sets $A^{-1}(b)$ and $\lambda^\star K$ share a nonzero normal vector $v\in X$ at their contact point $x^\star,$ such that the \cz{regularity condition} holds:
    \begin{equation}\label{non-degenerate-normal-vector}\langle v, x^\star \rangle= \sigma_{\lambda^\star K}(v) >0,\end{equation}
    \end{itemize}
    satisfy $(i) \implies(ii)$. Moreover, if $\ran (A^\ast)$ is closed, $(i) \iff (ii)$.
\end{prpstn}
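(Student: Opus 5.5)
The plan is to derive both directions from Lemma~\ref{lem-saddle-exact-2}. That lemma says $y^\star$ minimises $J_0$ if and only if $x^\star \in \sigma_K(A^\ast y^\star)\,\partial\sigma_K(A^\ast y^\star)$. The candidate normal vector will be $v = A^\ast y^\star$, possibly rescaled.

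\emph{(i) $\Rightarrow$ (ii).} Let $y^\star$ minimise $J_0$ and set $v := A^\ast y^\star$. Then $v \in \ran(A^\ast) \subset \overline{\ran(A^\ast)} = N_{A^{-1}(b)}(x^\star)$.

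First we show $\sigma_K(v)>0$. If $\sigma_K(v)=0$, the inclusion gives $x^\star = 0$, which contradicts Assumption~\ref{assum-constructive}. Hence $x^\star = \sigma_K(v)\,u$ for some $u \in \partial\sigma_K(v)$, that is, $u \in K$ maximises $\langle \cdot, v\rangle_X$ over $K$. In particular $v \neq 0$.

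Next we identify the scaling. Since $u \in K$, we have $j_K(x^\star)\le \sigma_K(v)$. I expect, using the optimality value $\pi_0 = -\min J_0$ together with $J_0(y^\star) = \tfrac12\sigma_K^2(v) - \langle x^\star, v\rangle = \tfrac12\sigma_K^2(v)-\sigma_K(v)^2 = -\tfrac12 \sigma_K(v)^2$, that $\sigma_K(v)$ coincides with $j_K(x^\star)$. This is exactly the scalar $\lambda^\star$ with $x^\star \in \lambda^\star K$ (I read $\lambda^\star$ so that $\lambda^\star K$ is the level set of $j_K$ through $x^\star$). Then $x^\star/\lambda^\star = u \in \partial \sigma_K(v)$. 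Scaling gives $x^\star \in \partial\sigma_{\lambda^\star K}(v)$, so by \eqref{subdiff-normal-cone} $v \in N_{\lambda^\star K}(x^\star)$.

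Finally, $\langle v, x^\star\rangle = \sigma_K(v)^2 = \lambda^\star\sigma_K(v) = \sigma_{\lambda^\star K}(v) > 0$, which is \eqref{non-degenerate-normal-vector}.

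\emph{(ii) $\Rightarrow$ (i) when $\ran(A^\ast)$ is closed.} Take $v$ as in (ii). Because $\ran(A^\ast)$ is closed, $N_{A^{-1}(b)}(x^\star) = \ran(A^\ast)$, so $v = A^\ast y_0$ for some $y_0$. From $v \in N_{\lambda^\star K}(x^\star)$ and \eqref{subdiff-normal-cone}, we get $x^\star/\lambda^\star \in \partial\sigma_K(v)$. Positive homogeneity gives $\partial\sigma_K(tv) = \partial\sigma_K(v)$ for $t>0$. We therefore set $y^\star := t y_0$ and choose $t$ so that $\sigma_K(tA^\ast y_0) = t\sigma_K(v) = \lambda^\star$; this is possible precisely because $\sigma_K(v)>0$ by \eqref{non-degenerate-normal-vector}. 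Then
\[
x^\star = \lambda^\star \cdot \frac{x^\star}{\lambda^\star} \in \sigma_K(A^\ast y^\star)\,\partial\sigma_K(A^\ast y^\star),
\]
and Lemma~\ref{lem-saddle-exact-2} shows that $y^\star$ is a dual minimiser.

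The main obstacle is the bookkeeping of the scalar $\lambda^\star$: matching $\sigma_K(A^\ast y^\star)$ with $j_K(x^\star)$ in the forward direction, and making sure the regularity condition is exactly what allows the rescaling in the converse. If the normalisation $\lambda^\star$ versus $(\lambda^\star)^2$ in Assumption~\ref{assum-constructive} is ambiguous, I would define $\lambda^\star := j_K(x^\star)$ and argue only with that quantity.
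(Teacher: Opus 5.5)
Your proposal is correct and follows the paper's proof. In the forward direction you combine the optimality inclusion with strong duality to get $\sigma_K(A^\ast y^\star)=j_K(x^\star)=\lambda^\star>0$ and take $v=A^\ast y^\star$; in the converse you write $v=A^\ast y_0$ using the closedness of $\ran(A^\ast)$ and rescale $y_0$ so that $\sigma_K(A^\ast y^\star)=\lambda^\star$. The differences are minor: you conclude via Lemma~\ref{lem-saddle-exact-2} rather than checking $0\in\partial J_0(y^\star)$ directly, and your linear factor $t=\lambda^\star/\sigma_K(v)$ is the correct normalisation (the paper's factor $\sqrt{\lambda^\star/\sigma_K(A^\ast y)}$ only yields $\sigma_K(A^\ast y^\star)=\lambda^\star$ when $\sigma_K(A^\ast y)=\lambda^\star$ already).
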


\begin{proof}

We first prove $(i) \implies (ii)$. Assuming $(i)$, recall the optimality condition \eqref{saddle-point}
\[x^\star \in \sigma_K(A^\ast y^\star) \,\partial \sigma_K(A^\ast y^\star).\]
Strong duality (Proposition \ref{strong-duality}) implies
\[\tfrac12 (\lambda^\star)^2=\pi_0=-\min_Y J_0=-\tfrac12\sigma_K(A^\ast y^\star)^2+\langle b, y^\star\rangle=-\tfrac12\sigma_K(A^\ast y^\star)^2+\langle Ax^\star, y^\star\rangle.\]
The optimality condition above clearly implies that $\langle x^\star, A^\ast y^\star\rangle=\sigma_K(A^\ast y^\star)^2$ so that we have
\[\tfrac12(\lambda^\star)^2=\tfrac 12 \sigma_K(A^\ast y^\star)^2,\]
\ie, since $\sigma_K(A^\ast y^\star)\geq 0$ by property of $K$,
\[0<\lambda^\star=\sigma_K(A^\ast y^\star).\]
In particular, this yields the regularity condition \eqref{non-degenerate-normal-vector}.

We can then rewrite the optimality condition as
\[x^\star \in \partial \sigma_{\lambda^\star K}(A^\ast y^\star),\]
which, thanks to \eqref{subdiff-normal-cone}, implies that $A^\ast y^\star \in N_{\lambda^\star K}(x^\star)$. On the other hand, since $A^\ast y^\star \in\ran (A^\ast)\subset (\ker(A))^\perp=N_{A^{-1}b}(x^\star)$, this proves $(ii)$.

To prove the converse under the hypothesis that $\ran (A^\ast)$ is closed, first note that
$(\ker(A))^\perp =\overline{\ran(A^\ast)}=\ran (A^\ast)$.
Thus, if $\lambda^\star K$ and $A^{-1} (b)$ share a normal vector $v$, then there exists $y\in Y$ such that $v=A^\ast y$, and we can write, by definition of $N_{\lambda^\star K}(x^\star)$,
\[x^\star \in \partial \sigma_{\lambda^\star K}(A^\ast y)= \lambda^\star  \partial \sigma_{K}(A^\ast y).\]

Moreover, by hypothesis, $\sigma_{K}(A^\ast y)=\frac1{\lambda^\star}\langle A^\ast y, x^\star \rangle_X >0.$
We can then define 
\[y^\star:=\sqrt{\frac{\lambda^\star}{\sigma_{K}(A^\ast y)}} y.\]
Then,
\[x^\star\in \lambda^\star \partial \sigma_K (A^\ast y)=\lambda^\star \sqrt{\frac{\sigma_K(A^\ast y)}{\lambda^\star}} \, \partial\sigma_K(A^\ast y^\star)=\sigma_K(A^\ast y^\star)\, \partial\sigma_K(A^\ast y^\star).\]
Using the notations introduced when discussing Fenchel-Rockafellar duality, namely $F = \tfrac12 j_{K}^2$ and $G = \delta_{\{b\}}$, the above rewrites
\[x^\star \in \partial \big( \tfrac12 \sigma_{K}^2 \big)(A^\ast y^\star )=\partial F^\ast(A^\ast y^\star).\]
Hence 
\[A x^\star = b\in A \partial F^\ast(A^\ast y^\star), \]
which is equivalent to
\[0\in A\partial F^\ast(A^\ast y^\star)- \partial G^\ast(y^\star)=\partial \left( F^\ast \circ A^\ast + G^\ast(- \cdot)\right)(y^\star)=\partial J_0(y^\star).\]
Note that the above computation on subdifferentials is guaranteed by the sufficient stability condition used to establish strong duality~\cite{Rockafellar1967}, namely
\[ A^\ast\operatorname{dom}G^\ast(-\cdot) \cap \operatorname{cont}(F^\ast) \neq \emptyset. \]
Hence, the dual problem reaches its infimum at $y^\star$.
\end{proof}

\subsection{On the existence of shared normal vectors}

Given the above section, we now focus on the existence of shared normal vectors between the sets $\lambda^\star K$ and $A^{-1}b$. We begin with a counterexample in infinite dimension.
\paragraph{A counterexample in infinite dimension: convex cone (not closed), no shared normal vectors.}
We here here heavily draw from \cite[Example 6.11 iv]{bauschke-combettes}.

Let $X=\ell^2(\N)$, let $\{e_n\}_{n\in \N}$ be a Hilbert basis of $X$. We define the following convex, closed, bounded set:
\[C:=\left\{x=\sum_{n\in \N} x_n e_n, \quad -2^{-n} \leq x_n \leq 4^{-n}, \ \forall n \in \N.\right\}\]
This set generates a dense cone (see \cite{bauschke-combettes}), but not the whole space.
Define
\[x^\star:=\sum_{n \in \N} 2^{-n} e_n.\]
Then, $-x^\star\in C $ but $x^\star\notin C.$
Define the following generating set
\[K:=C+x^\star=\left\{x=\sum_{n\in \N}x_n e_n, \quad 0\leq x_n \leq 4^{-n}+2^{-n}, \ \forall n\in \N\right\},\]
which is clearly convex, bounded, closed, and contains $0$.
Moreover, $x^\star\in K$, $j_{K}(x^\star)=1$.

Let $\mathcal{I}\subset \N$ such that $\operatorname{card}(\mathcal{I})<+\infty$. Then, for any $A\in L(X,Y)$ such that $\ker(A)= \operatorname{Span}\{e_n, \ n\in \mathcal{I}\}$, 
\[\inf_{x\in x^\star+\ker (A)} \tfrac12 j_{K}^2(x)=\tfrac12 j_K^2(x^\star)=\tfrac12.\]
Hence, $K$, $A$, and $x^\star$ satisfy Assumption \ref{assum-constructive} with $\lambda^\star=1$.

Now, by density of $\cone (C)$,
\[(K-x^\star)^\circ=C^\circ=\{0\},\]
\ie
\[N_{K}(x^\star)=\{0\}.\]
Hence, 
\[N_{K}(x^\star) \cap N_{A^{-1}(b)}(x^\star)=\{0\},\]
\ie there are no nontrivial shared normal vectors, even though $x^\star$ minimises $\frac12j_{K}^2$ on $A^{-1}(b)$. By Proposition \ref{prop-dual-min-shared-normal}, the associated dual problem does not have a minimiser even though its infimum is finite.

\paragraph{An important partial result on the existence of shared normal vectors.}

The above counterexample precludes any general result on shared normal vectors in infinite dimension. However, in finite dimension, we are able to prove a positive result:
\begin{lmm}
Suppose that $X$ has finite dimension. 
     Then, under Assumption \ref{assum-constructive}, $\lambda ^\star K$ and $A^{-1}b$ share a nonzero normal vector in $x^\star$.
\end{lmm}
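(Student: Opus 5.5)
The plan is to obtain the shared normal vector from a finite-dimensional separation argument. The set $\lambda^\star K$ may have empty interior, so I would use the version of the separation theorem that needs no interior point.

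First I set up the geometry. Let $L := A^{-1}(b) = x^\star + \ker(A)$, which is a nonempty affine subspace. Let
\[
S := \{x \in X, \; j_K(x) < \lambda^\star\} = \bigcup_{0 \le t < \lambda^\star} tK,
\]
where $\lambda^\star = j_K(x^\star) > 0$ because $x^\star \neq 0$ (Lemma~\ref{basic-gauge}). The set $S$ is convex because $j_K$ is convex, and it is nonempty because $0 \in S$. By Assumption~\ref{assum-constructive}, $x^\star$ minimises $j_K$ over $L$, so $S \cap L = \emptyset$. Also, $K$ is closed and $j_K(x^\star) = \lambda^\star$, so $x^\star \in \lambda^\star K$ by Lemma~\ref{basic-gauge}.

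Next I separate. Since $X$ is finite-dimensional, two disjoint nonempty convex sets can be separated by a hyperplane; indeed, since their relative interiors are disjoint, they can even be properly separated, with no interiority assumption (Rockafellar, Theorem 11.3). This gives $v \neq 0$ such that
\[
\sup_{x\in S}\langle v, x\rangle_X \le \inf_{x \in L}\langle v, x\rangle_X .
\]
The right-hand side is finite, since $S \neq \emptyset$. An infimum of a linear form over an affine subspace is finite only if the form vanishes on its direction, so $v \in \ker(A)^\perp = N_L(x^\star)$, and the infimum equals $\langle v, x^\star\rangle_X$.

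Then I pass from $S$ to $\lambda^\star K$. For every $t < \lambda^\star$ we have $tK \subset S$, so $t\,\sigma_K(v) \le \langle v, x^\star\rangle_X$. Letting $t \to \lambda^\star$ gives $\sigma_{\lambda^\star K}(v) \le \langle v, x^\star\rangle_X$. The reverse inequality holds because $x^\star \in \lambda^\star K$. Hence $\langle v, x^\star\rangle_X = \sigma_{\lambda^\star K}(v)$, that is, $v \in N_{\lambda^\star K}(x^\star)$. So $v$ is a nonzero normal vector shared by $\lambda^\star K$ and $A^{-1}(b)$ at $x^\star$. Moreover, since $0 \in K$, this common value is $\ge 0$.

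The main obstacle is the use of separation with possibly empty interior. This is exactly where finite dimension enters, and the infinite-dimensional counterexample above shows the step cannot be avoided in general. If the strict regularity condition \eqref{non-degenerate-normal-vector} were also wanted, I would use proper separation to exclude $\langle v, x^\star\rangle_X = 0$. In that case $L$ would lie in the hyperplane $\{\langle v,\cdot\rangle = 0\}$, so proper separation would force $S$, and hence $\lambda^\star K$, to leave that hyperplane. I expect that strictness may genuinely require more work, or may fail, but it is not part of the statement.
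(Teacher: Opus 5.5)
Your proof is correct, but it follows a more direct route than the paper's. The paper argues by contradiction. If no nonzero shared normal existed, then $(\lambda^\star K - x^\star)^\circ \cap \ran(A^\ast) = \{0\}$, and the polar identity \eqref{identities-cones} turns this into $\bigl(\cone(\lambda^\star K - x^\star) + \ker(A)\bigr)^\circ = \{0\}$. So that convex cone is dense, and in finite dimension a dense convex set is all of $X$. It therefore contains $\nu x^\star$ for $\nu>0$, and unwinding this produces a point of $A^{-1}(b)$ with gauge at most $\frac{\rho}{\rho+\nu}\lambda^\star<\lambda^\star$, contradicting minimality. You instead separate the strict sublevel set $\{j_K<\lambda^\star\}$ directly from the affine set $A^{-1}(b)$, using separation without interior points (Rockafellar, Theorem~11.3). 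You then read off both normal-cone memberships from the separating functional, and the limit $t\to\lambda^\star$ carries you from the sublevel set to $\lambda^\star K$. Both arguments rest on the same finite-dimensional fact: a convex set different from $X$ lies in a closed half-space. Your version avoids the polar-cone calculus and the contradiction, exhibits $v$ explicitly as a separating functional, and gives $\langle v, x^\star\rangle_X\geq 0$ for free. The paper's polar formulation instead mirrors its infinite-dimensional counterexample, where $\cone(K-x^\star)$ is dense but not the whole space, so that $N_K(x^\star)=\{0\}$. On your closing remark, your caution is justified. The paper's $\R^3$ example with a closed circular cone shows that, even in finite dimension, every shared normal vector can be orthogonal to $x^\star$. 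So proper separation cannot in general deliver the strict condition \eqref{non-degenerate-normal-vector}.
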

\begin{proof}
    We proceed by contradiction. Suppose that $\lambda^\star K$ and $A^{-1}b$ do not share a nonzero normal vector in $x^\star$: 
    \[N_{A^{-1}(b)}(x^\star) \cap N_{\lambda^\star K}(x^\star)=\{0\},\]
   \ie
    \[(\lambda^\star K - x^\star)^\circ \cap \operatorname{ran}(A^\ast)=\{0\}.\]
    Taking the cones generated by the sets involved above, we arrive at 
    \[\cone\left((\lambda^\star K - x^\star)^\circ\right) \cap \ran(A^\ast) = \{0\}.\]
    By the elementary result~\eqref{identities-cones}, we then have
    \[\left(\cone(\lambda^\star K -x^\star) \, + \, \ker(A)\right)^\circ= \{0\},\]
    \ie
    \[\overline{\cone(\lambda^\star K -x^\star) \, + \, \ker(A)}=X.\]

    In finite dimension, it is standard that a dense convex set is the whole space. Indeed, if it were not the whole space, it would be included in a closed half-space~\cite{Rockafellar1970}[Corollary 11.5.2], and such a half-space cannot be dense.

    Hence, we obtain that for any $\nu >0$, $\nu x^\star \in \cone(\lambda^\star K -x^\star) \, + \, \ker(A)$. That is, there exist $\rho \geq 0, \, k\in K, \, v\in \ker(A)$ such that
    \[\nu x^\star=\rho(\lambda^\star k - x^\star)+v,\]
    \ie
    \[(\nu+\rho) x^\star=\rho\lambda^\star k + v,\]
    \ie
    \[x^\star+v^\prime=\frac{\rho}{\nu+\rho}\lambda^\star k, \quad v^\prime \in \ker(A),\]
    which implies
    \[A(x^\star + v^\prime)=b, \quad j_{K}(x^\star +v^\prime)=\frac{\rho}{\rho+\nu}\lambda^\star < \lambda^\star=\inf_{Ax=b} j_{K}(x),\]
    we thus arrive at a contradiction.

\end{proof}

\subsection{On the regularity condition}

Notice that, in Proposition \ref{prop-dual-min-shared-normal}, the condition $\langle v, x^\star \rangle>0$ for the shared normal vector $v$ might not seem very natural in the context of a geometric interpretation. We now show a simple counterexample, illustrating how the unfavorable situation $\langle v, x^\star \rangle_X =0$ can in fact occur even in simple settings.

\paragraph{Shared normal vectors, no minimiser for the dual functional (closed cone)} 
Take $X=\R^3$, $Y = \R^2$, $\alpha>0$, and let $P$ be the convex closed cone given by
\[P:=\left\{(x_1, x_2, x_3)\in \R^3, \quad x_3\geq0, \ x_1^2+x_2^2 \leq \alpha^2 x_3^2\right\},\]
and denote as usual $K:=P\cap \overline{B(0,1)}$.
One easily checks that
\[P^\circ=\left\{(x_1, x_2, x_3)\in \R^3, \quad x_3\leq0, \ x_1^2+x_2^2 \leq \tfrac1{\alpha^2} x_3^2\right\}.\]
Now define
\[A:=\begin{pmatrix}
    1 & 0 & -\alpha \\
    \alpha & 1+\alpha^2 & 1 
\end{pmatrix}, \quad b=\begin{pmatrix}
    0 \\1
\end{pmatrix}.
\]
One easily checks that
\[x^\star:=(\tfrac{\alpha}{1+\alpha^2}, 0, \tfrac1{1+\alpha^2})\, \in P, \quad Ax^\star=b.\]
Moreover, given
the set of solutions
\[A^{-1}(b)=x^\star + \ker(A)=\left\{\big(\tfrac{\alpha}{1+\alpha^2}-\alpha s, s, \tfrac1{1+\alpha^2}-s \big), \ s\in \R\right\},\]
we have
\[A^{-1}(b)\cap P = \{x^\star\}.\]
Hence, $x^\star$ is also a (in fact, the) solution to the primal problem \eqref{primal-fct}.

Let us here denote $\lambda^\star=\|x^\star\|_X.$
We have on the one hand
\[\ran (A^\ast) = \operatorname{Span}\{(1,0,-\alpha) ; (\alpha, 1+\alpha^2, 1)\},\]
and on the other hand, using Lemma \ref{lem-charac-normal-cone},
\[N_{\lambda^\star K}(x^\star)=\cone(\{x^\star\})+ P^\circ \cap \{x^\star\}^\perp.\]
Now, it is clear that
\[P^\circ \cap \{x^\star\}^\perp=\cone(\{(1,0,-\alpha)\})\subset \ran (A^\ast).\]
Moreover, $x^\star \notin \ran (A^\ast)$, so that finally
\[N_{\lambda^\star K}(x^\star) \cap \ran (A^\ast)=P^\circ \cap \{x^\star\}^\perp.\]
In particular, all shared normal vectors of $A^{-1}(b)$ and $\lambda^\star K$ are normal to $x^\star$, which by Proposition \ref{prop-dual-min-shared-normal} implies that the dual problem associated to $b$ does not have a minimiser, even though its infimum is finite.



\paragraph{A sufficient condition.} In the above counterexample, $\lambda^\star K$ and $\ran (A^\ast)$ share nontrivial normal vectors, but they are all orthogonal to $x^\star$. We now give a sufficient condition under which this type of situation cannot arise. Its proof is based on a separation result between two well-chosen convex sets. 
\begin{prpstn}
    We work under Assumption \ref{assum-constructive}, and assume furthermore that $P$ is closed, that $K=P\cap \overline{B(0,1)}$, and that $\ran(A^\ast)$ is closed.

    If for all $x\in P$, $\operatorname{Span}(\{x\})+P$ is closed, then the dual problem always admits a minimiser.
\end{prpstn}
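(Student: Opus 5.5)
By Proposition~\ref{prop-dual-min-shared-normal}, since $\ran(A^\ast)$ is closed, it suffices to produce a shared normal vector $v \in N_{\lambda^\star K}(x^\star)\cap \ran(A^\ast)$ satisfying the regularity condition $\langle v, x^\star\rangle_X = \sigma_{\lambda^\star K}(v) > 0$. Here $\lambda^\star = \|x^\star\|_X>0$, and the primal minimiser $x^\star$ is the least-norm point of $P\cap A^{-1}(b)$. The idea is to separate two well-chosen convex sets and then check that the separating functional has a nonzero component along $x^\star$.

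\textbf{Step 1: minimality as disjointness.} Write $A^{-1}(b)=x^\star+\ker(A)$. Minimality of $\|x^\star\|$ means that no $x\in P$ with $\|x\|<\lambda^\star$ lies in $x^\star+\ker A$. By homogeneity of $P$, this is equivalent to $t x^\star\notin P+\ker(A)$ for all $t>1$. Otherwise $t x^\star = p + k$ gives $p/t\in P\cap A^{-1}(b)$, and then $\|p/t\|<\lambda^\star$ would have to be ruled out, which requires a norm control. So I would instead use the following cone formulation. The point $x^\star$ is on the boundary, i.e. not in the interior relative to the direction $x^\star$, of the convex set $Q:=\{x\in P,\ \|x\|\le \lambda^\star\}+\ker(A)$. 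More precisely, $(1+\eta)x^\star\notin Q$ for every $\eta>0$.

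\textbf{Step 2: closedness and separation.} I want to separate $(1+\eta)x^\star$ from $Q$, or better, separate $x^\star$ from the open-type set $\{x\in P:\|x\|<\lambda^\star\}+\ker A$ with a functional that does not vanish on $x^\star$. Take $D:=\operatorname{Span}(\{x^\star\})+P$, which is closed by hypothesis. Project modulo $\ker(A)$, i.e. work in $X/\ker A\cong \ran(A^\ast)$, which is closed. The expected key geometric claim is that $\overline{\mathrm{int}}$-type degeneracy, where all normals are orthogonal to $x^\star$ as in the preceding counterexample, arises exactly when $-x^\star$ is a limit of elements of $P+\ker A$ without being attained. Closedness of $\operatorname{Span}(\{x\})+P$ should prevent this. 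Concretely, I would apply Hahn--Banach to the closed convex set $C:=x^\star+\ker(A)$ and the convex set $\operatorname{int}$-free set $B_\eta := P\cap \overline{B(0,\lambda^\star-\eta)}$, which is weakly compact and disjoint from $C$. This gives $v_\eta$ normalised with $\sup_{B_\eta}\langle v_\eta,\cdot\rangle < \inf_C\langle v_\eta,\cdot\rangle$. Boundedness of the infimum over the affine set forces $v_\eta\in(\ker A)^\perp=\ran(A^\ast)$, and then $\inf_C \langle v_\eta,\cdot\rangle=\langle v_\eta,x^\star\rangle$. Since $0\in B_\eta$, this yields $\langle v_\eta,x^\star\rangle>0$ and $\sigma_{B_\eta}(v_\eta)\le\langle v_\eta,x^\star\rangle$.

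\textbf{Step 3: passing to the limit (main obstacle).} Normalise $v_\eta$ by $\langle v_\eta,x^\star\rangle=1$ and let $\eta\to0$. If $(v_\eta)$ stays bounded, a weak limit $v\in\ran(A^\ast)$ satisfies $\sigma_{\lambda^\star K}(v)\le 1=\langle v,x^\star\rangle$ by weak lsc, hence equality since $x^\star \in \lambda^\star K$, which gives the desired regular shared normal. The hard part is boundedness. If $\|v_\eta\|\to\infty$, rescale to $w_\eta = v_\eta/\|v_\eta\|$. A weak limit $w$ lies in $\ran(A^\ast)\cap P^\circ\cap\{x^\star\}^\perp$, which is exactly the degenerate situation. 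I would try to derive a contradiction from the closedness of $\operatorname{Span}(\{x^\star\})+P$: its polar is $P^\circ\cap\{x^\star\}^\perp$ by \eqref{identities-cones}. Closedness of the sum, together with closedness of $\ran(A^\ast)$, should give a quantitative estimate of the form $\operatorname{dist}(z,P^\circ\cap\{x^\star\}^\perp)\le c\,(\ldots)$, an open-mapping or Robinson-type argument. This estimate would let me subtract from $v_\eta$ its component in $P^\circ\cap\{x^\star\}^\perp\cap\ran(A^\ast)$. Such vectors do not increase $\sigma_{B_\eta}$ and leave $\langle\cdot,x^\star\rangle$ unchanged, so the modified $v_\eta$ stays bounded, and the limiting argument above applies. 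If the norm-ball truncation is awkward, an alternative is to replace $K$ by $\lambda^\star K$ directly and use the Moreau decomposition of Lemma~\ref{intersect-ball} to express $\sigma_{\lambda^\star K}(v)=\lambda^\star\|v_+\|$.
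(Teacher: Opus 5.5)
Your Step~2 and the bounded case of Step~3 are sound. Separating $x^\star+\ker(A)$ from the weakly compact set $P\cap\overline{B(0,\lambda^\star-\eta)}$ gives $v_\eta\in\ran(A^\ast)$ with $\langle v_\eta,x^\star\rangle_X=1$ and $(\lambda^\star-\eta)\|(v_\eta)_+\|_X<1$. Any bounded subfamily then yields a regular shared normal by weak lower semicontinuity. The genuine gap is the unbounded case: the ``quantitative estimate'' you hope to extract from the closedness of $\operatorname{Span}(\{x^\star\})+P$ and of $\ran(A^\ast)$ is never derived, and it cannot be. Write $M:=P^\circ\cap\{x^\star\}^\perp$. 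What you need is $x^\star\in\ran(A^\ast)-M$. Minimality of $x^\star$ together with closedness of $\operatorname{Span}(\{x^\star\})+P=M^\circ$ only yields $-x^\star\in\overline{\ran(A^\ast)+M}$. Closedness of $\ran(A^\ast)$ does not make $\ran(A^\ast)+M$ closed, and $\ran(A^\ast)\cap M$ may be $\{0\}$, in which case there is nothing to subtract from $v_\eta$.

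In fact the statement is false, so no completion exists. Let $X=\ell^2\times\ell^2$, $Y=\ell^2$, $T=\mathrm{diag}(1/n)$, $A(s,t)=s+Tt$, and $P=\{0\}\times\ell^2$. Since $P$ is a closed subspace, $\operatorname{Span}(\{x\})+P=P$ is closed for every $x\in P$. Take $K=P\cap\overline{B(0,1)}$, $z=(1/n)_{n\geq1}$ and $b=Tz\neq0$. Then $\ran(A^\ast)=\{(y,Ty),\ y\in\ell^2\}$ is closed, and $A^{-1}(b)\cap P=\{(0,z)\}$ by injectivity of $T$, so $x^\star=(0,z)$. Moreover
\[ J_0(y)=\tfrac12\|Ty\|^2-\langle z,Ty\rangle=\tfrac12\|Ty-z\|^2-\tfrac12\|z\|^2, \]
whose infimum $-\tfrac12\|z\|^2$ is not attained since $z\in\overline{\ran(T)}\setminus\ran(T)$. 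Here $M=\ell^2\times\{0\}$ meets $\ran(A^\ast)$ only at $0$. Your $v_\eta=(y_\eta,Ty_\eta)$ satisfy $Ty_\eta\to z/\|z\|^2\notin\ran(T)$, so $\|y_\eta\|\to\infty$.

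The paper's proof fails at the same point. Separating $\ran(A^\ast)$ from the truncated set $x^\star+\overline{B(0,1)}\cap M$ only gives $h\in\ker(A)$ with $\langle h,x^\star+u\rangle_X\le-\alpha$ for $u\in M\cap\overline{B(0,1)}$. This does not imply $h\in M^\circ$: the inclusion $\cone(\cdots)\subset N_{\lambda^\star K}(x^\star)$ invoked there gives the polar inclusion in the wrong direction. In the example, $h=(Tz,-z)$ is such a separator because $\|Tz\|<\|z\|^2$, yet $x^\star+sh\notin P$ for $s>0$.

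The conclusion does hold if one additionally assumes that $\ran(A^\ast)+M$ is closed, which is automatic for polyhedral $P$ in finite dimension. That assumption upgrades $-x^\star\in\overline{\ran(A^\ast)+M}$ to $x^\star+m\in\ran(A^\ast)$ for some $m\in M$. This is a regular shared normal by Lemma~\ref{lem-charac-normal-cone}.
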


\begin{proof}
Since $\operatorname{ran}(A^\ast)$ is closed, by Proposition \ref{prop-dual-min-shared-normal}, the dual problem admits a minimiser if and only if there exists a vector $v\in N_{\lambda^\star K}(x^\star)\cap\ran(A^\ast)$ such that $\langle v, x^\star\rangle_X >0.$
Proceed by contradiction: suppose 
\[\ran (A^\ast) \cap \{v\in N_{\lambda^\star K}(x^\star), \ \langle v, x^\star\rangle>0\}=\emptyset.\]
Since, for any $v\in N_{\lambda^\star K}(x^\star)$, $\langle v, x^\star \rangle_X \geq 0$, this also writes
\begin{equation}\label{suppose-all-normal}N_{\lambda^\star K}(x^\star)\cap\ran(A^\ast)=N_{\lambda^\star K}(x^\star)\cap N_{A^{-1}(b)}(x^\star) \subset \{x^\star\}.^\perp\end{equation}

Recall now that, by Lemma \ref{lem-charac-normal-cone}, since $\|x^\star\|_X=\lambda^\star$,
\[N_{\lambda^\star K}(x^\star)=\cone(\{x^\star\})+P^\circ\cap \{x^\star\}^\perp\]
so
\[ \{v\in N_{\lambda^\star K}(x^\star), \ \langle v, x\rangle>0\}=\{\rho x^\star +v, \ \rho>0, \ v\in P^\circ \cap \{x^\star\}^\perp\},\]
and, by linearity, our assumption implies
\[\ran (A^\ast) \cap (x^\star+\overline{B(0,1)} \cap P^\circ\cap \{x^\star\}^\perp)=\emptyset.\]
Now since the sets in the above intersection are convex and closed, and $x^\star+\overline{B(0,1)} \cap P^\circ\cap \{x^\star\}^\perp$ is (weakly) compact, we can use a strict separation theorem: there exists $h\in X$, $h \neq 0$ and $\alpha>0$ such that
\begin{equation}\label{kerA-separates-normalplus}\forall v\in \overline{B(0,1)}\cap P^\circ\cap \{x^\star\}^\perp, \; \forall y\in Y, \quad \langle h, x^\star+v \rangle_X \leq \langle h, A^\ast y \rangle_X-\alpha.\end{equation}
For the above inequality to hold, one must have $h\in \ran (A^\ast)^\perp=\ker (A)$, which leads to
\[\forall v\in \overline{B(0,1)}\cap P^\circ\cap \{x^\star\}^\perp, \quad \langle h, x^\star + v \rangle_X \leq -\alpha. \]
Since $\cone(x^\star+\overline{B(0,1)}\cap P^\circ \cap \{x^\star\}^\perp)\subset N_{\lambda^\star K}(x^\star)$, this implies that $h\in N_{\lambda^\star K}(x^\star)^\circ$.

Now, using in order Lemma~\ref{lem-charac-normal-cone} and the identities~\eqref{identities-cones} (since $P$ and $\{x^\star\}^\perp$ are closed cones),
\[N_{\lambda^\star K}(x^\star)^\circ=(\cone(\{x^\star\})+P^\circ\cap \{x^\star\}^\perp)^\circ=\cone(\{x^\star\})^\circ \cap (P^\circ\cap \{x^\star\}^\perp)^\circ=\cone(\{x^\star\})^\circ \cap  \overline{\operatorname{Span}(\{x^\star\})+P}.\]
By hypothesis, $\operatorname{Span}(\{x^\star\})+P$ is closed, so we obtain
\[h\in \ker(A) \cap \cone(\{x^\star\})^\circ \cap  (\operatorname{Span}(\{x^\star\})+P).\]
Now, without loss of generality, we write $h=\rho x^\star + x_0$, with $\rho\in  [-1,1]$ and $x_0\in P.$
Then, for $s\in[0,1]$, $1+s\rho\geq 0$ and
\[x^\star+sh=(1+s\rho)x^\star+sx_0 \in P\cap A^{-1}(b),\]
since $P+P\subset P$, by definition of a convex cone.
Moreover, considering
$f:s\mapsto \|x^\star+sh\|_X^2$, we find that
$f^\prime (0)=2\langle x^\star, h \rangle_X \leq -\alpha<0$, from \eqref{kerA-separates-normalplus}. Hence, for $s$ small enough, $\|x^\star+sh\|_X^2\leq \|x^\star\|_X$, and $x^\star+sh \in P\cap A^{-1}(b) $, which contradicts the definition of $x^\star.$

\end{proof}

\appendix 
\section{Elementary results in convex analysis}
We here gather several basic results used throughout the work. We do not claim any originality, but provide them here for completeness and readability. 

Throughout this subsection, we let $H$ be a Hilbert space and $C$ be a non-empty, bounded, closed and convex set containing $0$.
\label{app-gauge-results}
\begin{lmm}
\label{basic-gauge}
For any $u \in H$, one has
\begin{itemize}
    \item[(i)] $j_C(u) = 0 \iff u = 0$,
    \item[(ii)] for any $\alpha>j_C(u)$, $u \in \alpha C$,
    \item[(iii)] for any $\alpha>0$, $j_C(u) \leq \alpha \, \iff \, u \in \alpha C$,
    \item[(iv)] $u \in \cone(C) \, \iff \, u \in j_C(u)\,  C$.
\end{itemize}
\end{lmm}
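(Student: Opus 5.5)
The plan is to prove each item directly from the definition $j_C(u)=\inf\{\alpha>0,\ u\in\alpha C\}$, with the convention $\inf\emptyset=+\infty$. Two properties of $C$ will be used throughout. First, since $C$ is convex and contains $0$, the dilates $\alpha C$ are nested: if $0<\alpha\le\beta$ then $\alpha C\subset\beta C$, because $\alpha c=\beta\big(\tfrac{\alpha}{\beta}c+(1-\tfrac{\alpha}{\beta})0\big)$. Second, $C$ is bounded, so $C\subset\overline{B(0,R)}$ for some $R>0$, and $C$ is closed.

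For (ii), take $\alpha>j_C(u)$. By definition of the infimum there is $\beta$ with $j_C(u)\le\beta<\alpha$ and $u\in\beta C$, and nestedness gives $u\in\alpha C$. For (iii), the implication $u\in\alpha C\implies j_C(u)\le\alpha$ is immediate from the definition. For the converse, assume $j_C(u)\le\alpha$. By (ii), $u\in(\alpha+1/n)C$ for every $n$, so $u=(\alpha+1/n)c_n$ with $c_n\in C$. Then $c_n=u/(\alpha+1/n)\to u/\alpha$, and since $C$ is closed, $u/\alpha\in C$, i.e.\ $u\in\alpha C$. This closedness step is the only delicate point, and it is exactly where $\alpha>0$ is needed.

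For (i), we have $0\in\alpha C$ for all $\alpha>0$, so $j_C(0)=0$. Conversely, if $j_C(u)=0$ then by (ii) $u\in\alpha C\subset\overline{B(0,\alpha R)}$ for every $\alpha>0$, so $\|u\|\le\alpha R$ for all $\alpha>0$, hence $u=0$; this is where boundedness enters.

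For (iv), recall $\cone(C)=\bigcup_{\alpha\ge0}\alpha C$. If $u\in j_C(u)\,C$, then $j_C(u)<\infty$ and $u$ lies in the cone. Conversely, if $u\in\cone(C)$, then $u\in\alpha C$ for some $\alpha\ge 0$, and we split into two cases. If $u=0$, then $j_C(u)=0$ by (i), and $0\in 0\cdot C$. If $u\neq0$, then necessarily $\alpha>0$, so $j_C(u)\le\alpha<\infty$, and $j_C(u)>0$ by (i). Applying (iii) with $\alpha=j_C(u)$ gives $u\in j_C(u)\,C$. No step is expected to be a real obstacle; the only care needed is the limit argument in (iii) and the $u=0$ case in (iv).
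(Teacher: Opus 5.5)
Your proof is correct and follows essentially the same route as the paper: nestedness of the dilates $\alpha C$ (from convexity and $0\in C$) for (ii), a limit argument with $\alpha_n\downarrow\alpha$ and closedness of $C$ for (iii), boundedness of $C$ for (i), and (i) combined with (iii) for (iv). The only differences are cosmetic: you derive (i) from (ii) instead of from a sequence $\varepsilon_n\to 0$, and in (iv) you state explicitly why $j_C(u)<\infty$, which the paper leaves implicit.
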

\begin{proof}
    If $j_C(u) = 0$, then we may find $(\e_n)$ with $\e_n>0, \, \e_n \to 0$  and $(c_n) \in C^\N$ such that for all $n \in \N$, $u = \e_n c_n$. Since $C$ is bounded, we uncover $u=0$ at the limit $n \to+\infty$. The converse assumption is trivial since $0 \in C$, so (i) is proved.

    Since $\alpha> j_C(u)$, we may find $j_C(u) < \beta \leq \alpha$ such that $u \in \beta C$. Hence there exists $c \in C$ such that $u = \beta c = \alpha(\tfrac{\beta}{\alpha} c) = \alpha(\tfrac{\beta}{\alpha}c + (1-\tfrac{\beta}{\alpha}) 0)$. The vector $\tfrac{\beta}{\alpha}c$ appears as the convex combination of $c \in C$ and $0 \in C$, which shows that $u \in \alpha C$ and proves (ii).

    Let $\alpha>0$ and $u \in H$. If $u \in \alpha C$, then $j_C(u) \leq \alpha$ by definition. Conversely, assume $j_C(u) \leq \alpha$. We may find a sequence $(\alpha_n)$ with $\alpha_n > \alpha$ converging to $\alpha$. Since $\alpha_n > j_C(u)$, (ii) ensures the existence of $c_n \in C$ such that $u = \alpha_n c_n \iff c_n = \alpha_{n}^{-1} u$. Passing to the limit $n \to +\infty$ (since $\alpha>0$), we find that $(c_n)$ converges to $c:= \alpha^{-1} u$. By closedness of $C$, $c \in C$, showing that $u \in \alpha C$ and finishing the proof of~(iii).

    Let $u \in H$. If $u=0$, the equivalence is clear: $0 \in \cone(C)$ by the assumption $0 \in C$, and since $j_C(u)=0$, any $v \in C$ is such that $0 = u = j_C(u) v = 0$. Now assume that $u \neq 0$. By (i), we know that $j_C(u) \neq 0$, hence if $u = j_C(u) v$ with $v \in C$, we have $u \in \cone(C)$. Conversely, if $u \in \cone(C)$, we use (iii) with $\alpha = j_C(u)>0$, showing that $u \in \alpha C = j_C(u) C$, as wanted.%

\end{proof}

\begin{lmm}
\label{conj-supp-square}
There hold
\begin{itemize}
\item[(i)] $j_C \in \Gamma_0(H)$,
  \item[(ii)] $\tfrac{1}{2}\sigma^2_C \in \Gamma_0(H)$, $\tfrac{1}{2} j_C^2 \in \Gamma_0(H)$  and $(\tfrac{1}{2}j^2_C)^\ast = \tfrac{1}{2} \sigma_C^2$.
\end{itemize}
\end{lmm}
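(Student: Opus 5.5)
We prove (i) first, then deduce (ii) from it together with a direct computation of the conjugate.

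For (i), we show that $j_C$ is convex, proper and lower semicontinuous. Properness is clear since $j_C(0)=0$. Positive homogeneity follows from the definition: for $t>0$, $x\in\alpha C$ if and only if $tx\in t\alpha C$. Subadditivity comes from convexity of $C$. Given $u,v$ and $\alpha>j_C(u)$, $\beta>j_C(v)$, Lemma~\ref{basic-gauge}(ii) gives $u=\alpha c$ and $v=\beta c'$ with $c,c'\in C$. Then
\[u+v=(\alpha+\beta)\Big(\tfrac{\alpha}{\alpha+\beta}c+\tfrac{\beta}{\alpha+\beta}c'\Big)\in(\alpha+\beta)C,\]
so $j_C(u+v)\le\alpha+\beta$. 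Letting $\alpha,\beta$ decrease to the gauges gives subadditivity, hence convexity. For lower semicontinuity, Lemma~\ref{basic-gauge}(iii) shows that each sublevel set $\{j_C\le\alpha\}=\alpha C$ (for $\alpha>0$) is closed. The set $\{j_C\le 0\}=\{0\}$ is closed by Lemma~\ref{basic-gauge}(i), and the sublevel sets for $\alpha<0$ are empty. So all sublevel sets are closed.

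For (ii), $\tfrac12 j_C^2$ is the composition of $j_C\ge0$ with $t\mapsto\tfrac12 t^2$, which is convex and nondecreasing on $\R_+$. It is therefore convex and lower semicontinuous, and it is proper because it vanishes at $0$. Since $C$ is bounded, $\sigma_C$ is finite, convex and Lipschitz, with $\sigma_C\ge 0$ because $0\in C$. The same composition argument then shows that $\tfrac12\sigma_C^2$ is convex and continuous, hence in $\Gamma_0(H)$.

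The main step is the conjugate identity. We compute
\[(\tfrac12 j_C^2)^\ast(x)=\sup_{u}\big(\langle x,u\rangle-\tfrac12 j_C^2(u)\big)\]
by restricting to $u\in\dom j_C=\cone(C)$ and writing $u=tc$ with $t\ge0$ and $c\in C$. By Lemma~\ref{basic-gauge}(iii)–(iv), we have $j_C(tc)\le t$, and every $u\in\cone(C)$ is of the form $j_C(u)c$ for some $c\in C$. This gives
\[\sup_{u}\big(\langle x,u\rangle-\tfrac12 j_C^2(u)\big)=\sup_{t\ge0,\,c\in C}\big(t\langle x,c\rangle-\tfrac12 t^2\big).\]
The inequality ``$\ge$'' uses $j_C(tc)\le t$. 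The inequality ``$\le$'' takes $t=j_C(u)$. Optimising first in $c$ replaces $\langle x,c\rangle$ by $\sigma_C(x)\ge0$. Optimising then in $t\ge0$ yields $\tfrac12\sigma_C(x)^2$, attained at $t=\sigma_C(x)$.

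The only delicate point is making sure this reparametrisation is exact in both directions, and that the sign $\sigma_C\ge0$ lets the unconstrained optimum in $t$ lie in $t\ge0$. Both are covered by Lemma~\ref{basic-gauge} and by $0\in C$.
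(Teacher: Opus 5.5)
Your proof is correct, and it differs from the paper's in two places.

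For (i), the paper cites Bauschke--Combettes for convexity. It proves lower semicontinuity by taking $x_n\to x$ in a sublevel set, writing $x_n=j_C(x_n)v_n$, and extracting a weakly convergent subsequence of $(v_n)$ in the weakly compact set $C$. You instead prove convexity by hand, via positive homogeneity and subadditivity using Lemma~\ref{basic-gauge}(ii). You then get lower semicontinuity directly from the identification $\{j_C\le\alpha\}=\alpha C$ of Lemma~\ref{basic-gauge}(iii). That lemma already encodes the closedness of $C$, so no compactness argument is needed.

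For the conjugate identity, the paper goes the other way around. It computes $(\tfrac12\sigma_C^2)^\ast$ through the conjugate-of-composition formula $(g\circ f)^\ast(x)=\inf_{\alpha\ge0}\bigl(g^\ast(\alpha)+\alpha f^\ast(x/\alpha)\bigr)$ with $f=\sigma_C$ and $f^\ast=\delta_C$, which gives $\tfrac12 j_C^2$. It then invokes Fenchel--Moreau to flip the identity, and this is where $\tfrac12\sigma_C^2\in\Gamma_0(H)$ is actually used. You compute $(\tfrac12 j_C^2)^\ast$ straight from the definition. Your reparametrisation $u=tc$ with $t\ge0$, $c\in C$ is exact in both directions by Lemma~\ref{basic-gauge}(iii)--(iv), and you then optimise first in $c$ and then in $t$.

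Your route is self-contained and elementary. It avoids the composition formula, whose validity rests on a qualification condition and on the convention $0\,f^\ast(x/0)=\sigma_{\dom f}(x)$. It also does not need Fenchel--Moreau for the identity, so the $\Gamma_0$ memberships in (ii) become independent facts rather than ingredients of the proof. The paper's route is shorter on the page, but only because it treats the composition formula as a black box.
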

\begin{proof}
Since $j_C \geq 0$ and $j_C(0) = 0$, $j_C$ is proper. The convexity of $j_C$ is standard, see for instance~\cite{bauschke-combettes} (Example 8.27). Let us prove that $j_C$ is lower semicontinuous. Given $\alpha \geq 0$, we shall establish that $\{x \in H, \, j_C(x) \leq \alpha\}$, is closed. let $(x_n)$ be a sequence taking values in the previous set, converging to $x \in H$. By Lemma~\ref{basic-gauge} (iv), we may write $x_n = j_C(x_n) v_n$, with $(v_n) \in C^\N$. Upon extracting a subsequence, we may assume that $(v_n)$ converges weakly to some $v \in C$, by sequential weak compactness of $C$, and that the bounded sequence $(j_C(x_n))$ converges to some $\lambda \geq 0$ with $\lambda \leq\alpha$. Hence, $(x_n)$ converges weakly to $\lambda \, v$, which by uniqueness of the weak limit entails $x = \lambda v$, showing that $j_C(x) = \lambda \leq \alpha$, as wanted.

Let us now address (ii). We rather show that $(\tfrac{1}{2}\sigma^2_C)^\ast = \tfrac{1}{2} j_C^2$, which will prove the identity by Fenchel-Moreau's theorem. Since $0 \in C$, $\sigma_C \geq 0$. If $f \in \Gamma_0(H), \, f\geq 0$, then $f^2 \in \Gamma_0(H)$. Hence $\tfrac{1}{2}\sigma_C^2 \in \Gamma_0(H)$ and $\tfrac{1}{2} j_C^2 \in \Gamma_0(H)$.

Now let us prove the equality. For $x = 0$, it holds true since $j_C(0) = 0$ and $(\tfrac{1}{2}\sigma^2_C)^\ast(0) = -\tfrac{1}{2} \inf_{y \in H} \sigma^2_C(y) = 0$.

We now assume that $x \neq 0$. For $f \in \Gamma_0(H)$ and  $g \in \Gamma_0(\R)$ non-decreasing, we recall the composition formula
\begin{equation*}
(g\circ f)^\ast(x)= \inf_{\alpha \geq 0} \left(g^\ast(\alpha)+\alpha f^\ast\Big(\frac{x}{\alpha}\Big)\right),\end{equation*}
with the convention that for $\alpha = 0$, $0\, f^\ast(\tfrac{y}{0}) = \sigma_{\mathrm{dom}(f)}(y)$. Hence for $x \in H \setminus \{0\}$, one finds
\begin{align*}
\left(\tfrac{1}{2} \sigma_{C}^2\right)^\ast(x) & = \inf_{\alpha \geq 0}\left(\tfrac{1}{2} \alpha^2 + \alpha \delta_{C}\big(\frac{x}{\alpha}\big)\right) 
 = \inf_{\alpha > 0}\left(\tfrac{1}{2} \alpha^2 + \alpha \delta_{C}\big(\frac{x}{\alpha}\big)\right) \\
& = \tfrac{1}{2}\Big(\inf_{\alpha>0, x \in \alpha C}  \alpha\Big)^2 = \tfrac{1}{2} j_{C}^2(x),
\end{align*}
where we discarded $\alpha = 0$ since \[0 \,\delta_{C}\big(\frac{x}{0}\big)= \sigma_{\mathrm{dom}(\sigma_{C})}(x) = \sigma_H(x) =\delta_{\{0\}}(x) = +\infty,\]
using that $\mathrm{dom}(\sigma_{C}) = H$, by boundedness of $C$.
\end{proof}

\begin{lmm}
\label{closed_generated_cone}
If $0$ is in the interior of $C$ relative to the cone it generates, i.e., if
\[\exists \delta>0, \quad  \cone(C) \cap \overline{B(0,\delta)} \subset C,
\]
then $\cone(C)$ is closed.
\end{lmm}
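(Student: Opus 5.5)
The plan is to prove closedness of $\cone(C)$ directly by sequences, using the gauge $j_C$ together with the facts in Lemma~\ref{basic-gauge} and the lower semicontinuity from Lemma~\ref{conj-supp-square}(i). Let $(u_n)$ be a sequence in $\cone(C)$ converging to some $u\in H$; we must show $u\in\cone(C)$. If $u=0$ there is nothing to prove since $0\in C$. So assume $u\neq 0$.

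\emph{Key step: a linear bound on the gauge.} We use the hypothesis to show that $j_C(w)\le \|w\|/\delta$ for every $w\in\cone(C)$. Indeed, for $w\in\cone(C)\setminus\{0\}$, the vector $\delta w/\|w\|$ still lies in $\cone(C)$ (cones are stable under positive scaling) and has norm $\delta$. Hence it lies in $\cone(C)\cap\overline{B(0,\delta)}\subset C$. Therefore $w\in(\|w\|/\delta)\,C$, which gives the bound by definition of $j_C$. Applied to $u_n$, this shows that the sequence $(j_C(u_n))$ is bounded, say by $M:=\sup_n\|u_n\|/\delta<\infty$.

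\emph{Conclusion.} By Lemma~\ref{basic-gauge}(iii), $u_n\in MC$ for all $n$ (when $M>0$). Since $MC$ is closed, it follows that $u\in MC\subset\cone(C)$. Alternatively, one can argue through the sublevel set $\{j_C\le M\}$, which is closed by the lower semicontinuity proved in Lemma~\ref{conj-supp-square}(i); then $j_C(u)\le M<\infty$, and $u\in\dom(j_C)=\cone(C)$ by \eqref{dom-gauge} or Lemma~\ref{basic-gauge}(iv).

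The only genuine step is the gauge bound $j_C(w)\le\|w\|/\delta$ on the cone. It is exactly where the relative-interior hypothesis enters, and it fails in the counterexample of a disk tangent to the origin. Note that boundedness of $C$ is not even needed for this direction; only closedness of $C$ is used.
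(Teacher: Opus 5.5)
Your proof is correct, and it takes a more direct route than the paper's.

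The paper argues as follows:
\begin{enumerate}
\item It writes $p_n=\lambda_n c_n$ with $\lambda_n=j_C(p_n)$ and $c_n\in C$ (Lemma~\ref{basic-gauge}(iv)).
\item It uses the boundedness of $C$ to get $\liminf\lambda_n>0$.
\item It rules out an unbounded subsequence of $(\lambda_n)$ by contradiction. The rescaled vectors $w_n=\lambda_n^{-1/2}p_n$ tend to $0$ while $j_C(w_n)=\lambda_n^{1/2}\to+\infty$. So eventually $w_n\in\cone(C)\cap\overline{B(0,\delta)}$ but $w_n\notin C$.
\item It extracts $\lambda_n\to\lambda>0$ and passes to the limit in $c_n=p_n/\lambda_n$, using the closedness of $C$.
\end{enumerate}

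You instead turn the hypothesis into the estimate $j_C(w)\le\|w\|/\delta$ on $\cone(C)$. This estimate is in fact equivalent to the hypothesis, by Lemma~\ref{basic-gauge}(iii). With it, the whole convergent sequence lies in the single closed set $MC$, so neither the contradiction argument nor any subsequence extraction is needed. Your approach buys three things: brevity, an explicit comparison between the gauge and the norm on the cone, and, as you note, independence from the boundedness of $C$. The paper uses boundedness both for $\liminf\lambda_n>0$ and through Lemma~\ref{basic-gauge}(i),(iv).

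Two small points:
\begin{itemize}
\item $M>0$ is automatic, since $u\neq 0$ forces some $u_n\neq 0$.
\item Your alternative ending, via closedness of $\{j_C\le M\}$, does use boundedness, because the proof of Lemma~\ref{conj-supp-square}(i) relies on the weak compactness of $C$. Your closing remark about not needing boundedness therefore applies only to your first ending.
\end{itemize}
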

\begin{proof}
Let $(p_n) \in \cone(C)^{\N}$ be a convergent subsequence, of limit $p \in H$.
By Lemma~\ref{basic-gauge}(iv), we may write $p_n = \lambda_n c_n$ with $(c_n) \in C^\N$ and $\lambda_n = j_C(p_n)$. If $p=0$, there is nothing to prove, so we may assume $p \neq 0$. Since $(c_n)$ is bounded by hypothesis, and $(p_n)$ converges to $p \neq 0$, $\liminf \lambda_n > 0$. Hence, if $(\lambda_n)$ is upper bounded, then upon extraction we may write $\lambda_n \to \lambda$ with $\lambda \neq 0$. In this case, we find that $(c_n)$ converges to $c :=\tfrac{p}{\lambda}$. Since $C$ is closed, we have $c \in C$ and hence $p = \lambda c \in \cone(C)$.

To conclude, we only need to show that $(\lambda_n)$ cannot have a diverging subsequence. By contradiction, assume that it is the case: upon extraction, we may assume that $\lambda_n \to +\infty$ as $n \to +\infty$.
Then we may form the sequence $w_n = \lambda_n^{-1/2}p_n = \lambda_n^{1/2} c_n$. This sequence satisfies $w_n \to 0$ as $n \to +\infty$ as well as $(w_n) \in \cone(C)^{\N}$. We also compute $j_C(w_n) = \lambda_n^{-1/2} j_C(p_n) = \lambda_n^{1/2} \to +\infty$ as $n \to +\infty$. Hence for $n$ large enough we have both $w_n\in \cone(C) \cap \overline{B(0,\delta)}$ and $w_n \notin C$, contradicting the assumption that $0$ is the interior of $C$ relative to $\cone(C)$.
\end{proof}

Finally, we prove a technical Lemma which will help us characterise various situations with a \textbf{closed} cone~$P$.
\begin{lmm}\label{lem-charac-normal-cone}
Let $P$ be a closed convex cone, and $K:=P\cap \overline{B(0,1)}$. For $\lambda>0$ and $x\in \lambda K$, we have the following characterisation for the set of normal vectors $N_{\lambda K}(x)$: 
    \[N_{\lambda K}(x)=\begin{cases}
        \cone(\{x\})+ P^\circ \cap \{x\}^\perp &\text{if} \ j_K(x)=\lambda, \\
        P^\circ \cap \{x\}^\perp &\text{otherwise.}
    \end{cases}\]
\end{lmm}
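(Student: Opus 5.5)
We have to prove Lemma~\ref{lem-charac-normal-cone}: for $P$ a closed convex cone, $K=P\cap\overline{B(0,1)}$, $\lambda>0$ and $x\in\lambda K$, the normal cone $N_{\lambda K}(x)$ is given by the two-case formula. Note $\lambda K = P\cap \overline{B(0,\lambda)}$, and $j_K(x)=\|x\|_X$ for $x\in P$, so the case $j_K(x)=\lambda$ means $\|x\|_X=\lambda$. The plan is to compute $N_{\lambda K}(x)$ as the normal cone to an intersection, $N_{P}(x)+N_{\overline{B(0,\lambda)}}(x)$, and then identify each piece.

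\textbf{The two pieces.} For a closed convex cone one has $N_P(x)=P^\circ\cap\{x\}^\perp$ whenever $x\in P$. Indeed, $v\in N_P(x)$ means $\langle v,y-x\rangle\le 0$ for all $y\in P$; taking $y=2x$ and $y=0$ gives $\langle v,x\rangle=0$, and then $\langle v,y\rangle\le 0$ for all $y\in P$. The converse inclusion is immediate. For the ball, $N_{\overline{B(0,\lambda)}}(x)=\cone(\{x\})$ if $\|x\|_X=\lambda$ and $\{0\}$ otherwise.

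\textbf{The easy inclusion.} The inclusion $N_P(x)+N_{\overline{B(0,\lambda)}}(x)\subset N_{\lambda K}(x)$ is trivial, since $\lambda K$ is contained in both sets.

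\textbf{The reverse inclusion (main obstacle).} The sum rule for normal cones of an intersection normally needs a qualification condition, which may fail here in infinite dimension since neither set need have interior points meeting the other. I would therefore argue directly rather than invoke the sum rule. Take $v\in N_{\lambda K}(x)$ and consider the projection decomposition.

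\emph{Interior case, $\|x\|_X<\lambda$.} Any $y\in P$ satisfies $x+t(y-x)\in\lambda K$ for small $t>0$. Hence $\langle v,y-x\rangle\le 0$ for all $y\in P$, that is, $v\in N_P(x)=P^\circ\cap\{x\}^\perp$.

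\emph{Boundary case, $\|x\|_X=\lambda$.} Using $y=tx\in\lambda K$ for $t\in[0,1]$ gives $\langle v,x\rangle\ge 0$. Set $\rho=\langle v,x\rangle/\lambda^2\ge 0$ and $w=v-\rho x$, so that $w\perp x$. It remains to show $w\in P^\circ$. Take $p\in P$ and perturb along the sphere: set $y_t=\lambda\,\dfrac{x+tp}{\|x+tp\|_X}\in\lambda K$ for $t>0$ small; this is in $P$ since $x+tp\in P$. The normal inequality gives $\langle v,y_t\rangle\le\langle v,x\rangle$. Expanding to first order, with $\|x+tp\|_X=\lambda+t\langle x,p\rangle/\lambda+o(t)$,
\[
\langle v,y_t\rangle=\langle v,x\rangle+t\Big(\langle v,p\rangle-\tfrac{\langle v,x\rangle\langle x,p\rangle}{\lambda^2}\Big)+o(t),
\]
so $\langle v,p\rangle-\rho\langle x,p\rangle\le 0$, that is, $\langle w,p\rangle\le0$. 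Hence $w\in P^\circ\cap\{x\}^\perp$ and $v=\rho x+w$, as desired.

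This first-order expansion is the step to check carefully. The case $x=0$ with $\lambda>0$ is harmless, as it falls into the interior case.
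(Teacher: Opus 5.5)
Your proof is correct, but it takes a different route from the paper's.

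\textbf{The paper's route.} It argues through the support function. It writes $v\in N_{\lambda K}(x)$ iff $x\in\partial\sigma_{\lambda K}(v)$ iff $\lambda\sigma_K(v)=\langle x,v\rangle_X$. It then uses $\sigma_K(v)=\|v_+\|_X$ from Lemma~\ref{intersect-ball} together with Moreau's decomposition $v=v_++v_-$. The chain $\langle v,x\rangle_X=\langle v_+,x\rangle_X+\langle v_-,x\rangle_X\le\lambda\|v_+\|_X+\langle v_-,x\rangle_X\le\lambda\|v_+\|_X$ must consist of equalities. This forces $\langle v_-,x\rangle_X=0$ and equality in Cauchy--Schwarz, i.e.\ $v_+=0$ if $\|x\|_X<\lambda$ and $v_+=\rho x$ with $\rho\ge 0$ if $\|x\|_X=\lambda$. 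Both inclusions come out of a single equivalence chain, with no limiting argument, and the two summands are identified as the Moreau components of $v$.

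\textbf{Your route.} You work from the definition of the normal cone using feasible directions: segments towards $p\in P$ in the interior case, and the radial retraction $y_t$ onto the sphere in the boundary case, followed by a first-order expansion. This is self-contained and needs neither Lemma~\ref{intersect-ball} nor projections. It also never uses that $P$ is closed, so it proves the formula for any convex cone $P$ with $K=P\cap\overline{B(0,1)}$.

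\textbf{One inaccurate side remark.} The qualification condition for the intersection rule does hold here, because $0\in P$ lies in the interior of $\overline{B(0,\lambda)}$. Hence the Moreau--Rockafellar sum rule applied to $\delta_P+\delta_{\overline{B(0,\lambda)}}$ already gives $N_{\lambda K}(x)=N_P(x)+N_{\overline{B(0,\lambda)}}(x)$. Combined with your computation of the two pieces, this would finish the proof at once. Your direct argument is still valid; it just was not forced by any failure of the sum rule.
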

\begin{proof}
    Recall that, using the definition of $\sigma_K$ and Lemma \ref{intersect-ball},
    \[\begin{aligned}v\in N_{\lambda K}(x) &\iff \ x\in \partial \sigma_{\lambda K}(v) \\
    &\iff \ \lambda \sigma_K(v)=\langle x, v \rangle_X \\
    &\iff \ \lambda \|v_+\|_X=\langle v_+, x\rangle_X + \langle v_-, x \rangle_X.
    \end{aligned}\]
    Now, for $v\in X$, $\langle v_-, x \rangle_X \leq 0$ because $x \in P$, $v_- \in P^\circ$. Moreover, if $x\in \partial \sigma_{\lambda K}(v)$, then $\|x\|_X\leq \lambda$ since $K \subset \overline{B(0,1)}$, and 
    \[\lambda \|v_+\|_X=\sigma_{\lambda K}(v)=\langle v, x \rangle_X = \langle v_+, x\rangle_X + \langle v_-, x \rangle_X \leq \lambda \|v_+\|_X +\langle v_-, x\rangle_X  \leq \lambda \|v_+\|_X.\]
    Hence, the inequalities above are equalities, which yields
    \[v \in N_{\lambda K}(x) \iff \langle v_-, x \rangle_X =0, \quad \lambda \|v_+\|_X=\langle v_+, x \rangle_X. \]
    Now, $j_K(x)\leq \lambda$. If this inequality is strict, then $\lambda \|v_+\|_X=\langle v_+, x \rangle_X \iff v_+=0 $. Otherwise, $\|x\|_X=j_K(x)=\lambda$ and, since $v_+\in P$, $\lambda \|v_+\|_X=\langle v_+, x \rangle_X \iff \exists \rho\geq 0, \ v=\rho x$, which concludes the proof.
\end{proof}
\bibliographystyle{plain}
\bibliography{bib}
\end{document}